\def\to{\mathchoice{\longrightarrow}{\rightarrow}{\rightarrow}{\rightarrow}}
\newcommand{\shortxra}[2][]{\ext@arrow 0359\rightarrowfill@{#1}{#2}}
\newcommand{\shortxla}[2][]{\ext@arrow 3095\leftarrowfill@{#1}{#2}}
\def\longrightarrowfill@{\arrowfill@\relbar\relbar\longrightarrow}
\def\longleftarrowfill@{\arrowfill@\longleftarrow\relbar\relbar}
\newcommand{\longxra}[2][]{\ext@arrow 0359\longrightarrowfill@{#1}{#2}}
\newcommand{\longxla}[2][]{\ext@arrow 3059\longleftarrowfill@{#1}{#2}}
\renewcommand{\xrightarrow}[2][]{\mathchoice{\longxra[#1]{#2}}%
  {\shortxra[#1]{#2}}{\shortxra[#1]{#2}}{\shortxra[#1]{#2}}}
\renewcommand{\xleftarrow}[2][]{\mathchoice{\longxla[#1]{#2}}%
  {\shortxla[#1]{#2}}{\shortxla[#1]{#2}}{\shortxla[#1]{#2}}}
\def\Nopagebreak{\@nobreaktrue\nopagebreak}
\theoremstyle{plain}
\newtheorem*{theorem*}{Theorem}
\newtheorem*{lemma*} {Lemma}
\newtheorem*{corollary*} {Corollary}
\newtheorem*{proposition*} {Proposition}
\newtheorem*{conjecture*}{Conjecture}
\newtheorem{theorem}{Theorem}[section]
\newtheorem{lemma}[theorem]{Lemma}
\newtheorem{corollary}[theorem]{Corollary}
\newtheorem{proposition}[theorem]{Proposition}
\theoremstyle{remark}
\newtheorem*{claim}{Claim}
\theoremstyle{definition}
\newtheorem{example}[theorem]{Example}
\newtheorem{remark}[theorem]{Remark}
\def\Z{\mathbb{Z}}
\def\R{\mathbb{R}}
\def\N{\mathbb{N}}
\def\SS{\mathcal{S}}
\def\inte{\operatorname{int}}
\def\id{\operatorname{id}}
\def\l{\lambda}
\def\ll{\langle}
\def\rr{\rangle}
\def\Hom{\operatorname{Hom}}
\def\rank{\operatorname{rank}}
\def\ba{\begin{array}}
\def\ea{\end{array}}
\def\L{\Lambda}
\def\g{\gamma}
\def\ti{\tilde}
\def\fr12{\frac{1}{2}}
\def\Aut{\operatorname{Aut}}
\def\ol{\overline}
\def\Im{\operatorname{Im}}
\def\Ker{\operatorname{Ker}}
\def\tr{\operatorname{tr}}
\def\BB{\mathcal{B}}
\def\CC{\mathcal{C}}
\def\CZ{\mathcal{C}_{\Z}}
\def\DD{\mathcal{D}}
\def\EE{\mathcal{E}}
\def\FF{\mathcal{F}}
\def\GG{\mathcal{G}}
\def\S{\Sigma}
\def\aut{\mbox{Aut}}
\def\zt{\Z[t^{\pm 1}]}
\def\ord{\operatorname{ord}}
\def\eps{\epsilon}
\def\zb{\Z[b^{\pm 1}]}
\def\zab{\Z[a^{\pm 1},b^{\pm 1}]}
\def\zbthena{\Z[b^{\pm 1}][a^{\pm 1}]}
\def\sgn{\S_{g,n}}
\def\CC{\mathcal{C}}
\def\MM{\mathcal{M}}
\def\HH{\mathcal{H}}
\def\II{\mathcal{I}}
\def\EE{\mathcal{E}}
\def\top{\mathrm{top}}
\def\smooth{\mathrm{smooth}}
\def\cgn{\CC_{g,n}}
\def\hgn{\mathcal{H}_{g,n}}
\def\ihgn{\mathcal{IH}_{g,n}}
\def\ign{\mathcal{I}_{g,n}}
\def\hgntop{\mathcal{H}_{g,n}^{\top}}
\def\hgnsmooth{\mathcal{H}_{g,n}^{\smooth}}
\def\hgone{\mathcal{H}_{g,1}}
\def\mgn{\mathcal{M}_{g,n}}
\def\mgzero{\mathcal{M}_{g,0}}
\def\zh{\Z[H]}
\def\mi{(M,i_+,i_-)}
\def\nj{(N,j_+,j_-)}
\def\S{\Sigma}
\def\AH{A}
\def\NH{N}
\def\ANH{AN}
\def\pmkgn{\mathcal{PM}^k_{g,n}}
\begin{document}

\title{The cobordism group of homology cylinders}


\author{Jae Choon Cha}

\address{Department of Mathematics and Pohang Mathematics Institute\\
  Pohang University of Science and Technology\\
  Pohang Gyungbuk 790--784\\
  Republic of Korea}

\email{jccha@postech.ac.kr}

\author{Stefan Friedl}

\address{Mathematics Institute\\
  University of Warwick\\
  Coventry CV4 7AL\\
  United Kingdom}

\email{s.k.friedl@warwick.ac.uk}

\author{Taehee Kim}

\address{Department of Mathematics\\
  Konkuk University\\
  Seoul 143--701\\
  Republic of Korea}

\email{tkim@konkuk.ac.kr}

\def\subjclassname{\textup{2000} Mathematics Subject Classification}
\expandafter\let\csname subjclassname@1991\endcsname=\subjclassname
\expandafter\let\csname subjclassname@2000\endcsname=\subjclassname

\subjclass{Primary 57M27; Secondary 57N10}
\keywords{Torsion Invariant, Homology Cylinder, Homology Cobordism}

\begin{abstract}\leavevmode\kern.7em
Garoufalidis and Levine introduced the homology cobordism group of
homology cylinders over a surface.  This group can be regarded as an
enlargement of the mapping class group.  Using torsion invariants,
we show that the abelianization of this group is infinitely
generated provided that the first Betti number of the surface is
positive.  In particular, this shows that the group is not perfect.
This answers questions of Garoufalidis-Levine and Goda-Sakasai.
Furthermore we show that the abelianization of the group has
infinite rank for the case that the surface has more than one
boundary component.  These results hold for the homology cylinder
analogue of the Torelli group as well.
\end{abstract}

\maketitle

\section{Introduction}
\label{section:introduction}

Given $g\ge 0$ and $n\ge 0$, let $\sgn$ be a fixed oriented, connected
and compact surface of genus $g$ with $n$ boundary components.  We
denote by $\Hom^+(\sgn,\partial \sgn)$ the group of orientation
preserving diffeomorphisms of $\sgn$ which restrict to the identity on
the boundary. The \emph{mapping class group} $\mgn$ is defined to be
the set of isotopy classes of elements in $\mbox{Hom}^+(\sgn,\partial
\sgn)$, where the isotopies are understood to restrict to the identity
on the boundary as well. We refer to \cite[Section~2.1]{FM09} for
details. It is well known that the mapping class group is perfect
provided that $g\ge 3$ \cite{Po78} (e.g., see
\cite[Theorem~5.1]{FM09}) and that mapping class groups are finitely
presented \cite{BH71, Mc75} (e.g., see \cite[Section~5.2]{FM09}).

In this paper we intend to study an enlargement of the mapping class
group, namely the group of homology cobordism classes of homology
cylinders.  A \emph{homology cylinder} over $\sgn$ is roughly speaking
a cobordism between surfaces equipped with a diffeomorphism to $\sgn$
such that the cobordism is homologically a product.  Juxtaposing
homology cylinders gives rise to a monoid structure.  The notion of
homology cylinder was first introduced by Goussarov \cite{Go99} and
Habiro \cite{Ha00} (where it was referred to as a `homology
cobordism').

By considering smooth (respectively topological) homology cobordism
classes of homology cylinders we obtain a group $\hgnsmooth$
(respectively $\hgntop$).  These groups were introduced by
Garoufalidis and Levine \cite{GL05}, \cite{Le01}.  We refer to Section
\ref{section:homcyl} for the precise definitions of homology cylinder
and homology cobordism.  Henceforth, when a statement holds in both
smooth and topological cases, we will drop the decoration in the
notation and simply write $\hgn$ instead of $\hgntop$
and~$\hgnsmooth$.

It follows immediately from the definition that there exists a
canonical epimorphism $\hgnsmooth\to \hgntop$.  A consequence of work
of Fintushel-Stern \cite{FS90}, Furuta \cite{Fu90} and Freedman
\cite{Fr82} on smooth homology cobordism of homology $3$--spheres is
that this map is not an isomorphism.  In fact, using
their results we can see the following:

\begin{theorem}\label{thm:topsmooth}%
  Let $g,n\geq 0$. Then the kernel of the epimorphism
  $\HH_{g,n}^{\smooth}\to \HH_{g,n}^{\top}$ contains an abelian group
  of infinite rank. If $g=0$, then there exists in fact a homomorphism
  $\FF\colon\HH_{0,n}^{\smooth}\to A$ onto an abelian group of
  infinite rank such that  the restriction of $\FF$ to the kernel
  of the projection map $\HH_{0,n}^{\smooth}\to \HH_{0,n}^{\top}$ is
  also surjective.
\end{theorem}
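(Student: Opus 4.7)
The plan is to produce an infinite-rank subgroup of $\hgnsmooth$ by taking the interior connected sum of $\sgn\times I$ with integral homology $3$-spheres, land it inside $\ker(\hgnsmooth\to\hgntop)$ using Freedman's theorem, and detect it by inverting the construction with a Heegaard-style capping off.

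Let $\Theta_{\Z}^{3}$ denote the smooth homology cobordism group of integral homology $3$-spheres, and define $\Phi\colon\Theta_{\Z}^{3}\to\hgnsmooth$ by $\Phi(\Sigma):=[(\sgn\times I)\mathbin{\#}\Sigma]$, the interior connected sum equipped with the obvious boundary parametrisations. Since $\Sigma$ is an integral homology sphere, $\Phi(\Sigma)$ is again a homology cylinder; and because the interior summing happens away from the composition loci, $\Phi$ is a group homomorphism from the abelian group $\Theta_{\Z}^{3}$, so in particular its image is abelian. By Freedman, each $\Sigma$ bounds a contractible topological $4$-manifold $W_{\Sigma}$; inserting $W_{\Sigma}$ into the interior of $(\sgn\times I)\times I$ at a $4$-ball produces a topological homology cobordism from $(\sgn\times I)\mathbin{\#}\Sigma$ to $\sgn\times I$, so $\Phi(\Theta_{\Z}^{3})\subseteq\ker(\hgnsmooth\to\hgntop)$.

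Next, construct a closure map $\rho\colon\hgnsmooth\to\Theta_{\Z}^{3}$. Fix a standard genus-$g$ Heegaard splitting $S^{3}=H_{1}\cup_{\hat\Sigma_{g}}H_{2}$ of the $3$-sphere, where $\hat\Sigma_{g}$ is the closed genus-$g$ surface, and regard $\sgn\subset\hat\Sigma_{g}$ as the complement of $n$ open disks. For a homology cylinder $M$ over $\sgn$, cap the side boundary $\partial\sgn\times I\subset\partial M$ by $\partial\sgn\times D^{2}$ to obtain a $3$-manifold $\hat M$ with boundary $\hat\Sigma_{g}\sqcup\hat\Sigma_{g}$, and set $\rho(M):=H_{1}\cup\hat M\cup H_{2}$. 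A Mayer-Vietoris comparison against $\rho(\sgn\times I)=S^{3}$ shows that $\rho(M)$ is an integral homology $3$-sphere. To see that $\rho$ descends to smooth homology cobordism classes, extend any smooth homology cobordism $W$ between $M_{1}$ and $M_{2}$ by attaching $\partial\sgn\times D^{2}\times I$, $H_{1}\times I$, and $H_{2}\times I$ along the appropriate side faces of $W$; a second Mayer-Vietoris argument shows the result is a smooth homology cobordism between $\rho(M_{1})$ and $\rho(M_{2})$. Since $\rho((\sgn\times I)\mathbin{\#}\Sigma)=S^{3}\mathbin{\#}\Sigma=\Sigma$, we obtain $\rho\circ\Phi=\id_{\Theta_{\Z}^{3}}$, so $\Phi$ is injective. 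Combined with the Fintushel-Stern--Furuta theorem that $\Theta_{\Z}^{3}$ has infinite rank, this provides an abelian subgroup of infinite rank inside $\ker(\hgnsmooth\to\hgntop)$.

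For the case $g=0$, the closure $\rho$ is moreover a group homomorphism: the middle $2$-sphere $\hat\Sigma_{0}=S^{2}$ produced by capping in a composition $M_{1}\circ M_{2}$ separates $\rho(M_{1}\circ M_{2})$ into its two summands, giving $\rho(M_{1}\circ M_{2})=\rho(M_{1})\mathbin{\#}\rho(M_{2})$. Fix any surjection $\pi\colon\Theta_{\Z}^{3}\to A$ onto an infinite-rank abelian group $A$ (for instance via Fintushel-Stern's $R$-invariants, or via Fr\o{}yshov/Manolescu-type homomorphisms), and set $\FF:=\pi\circ\rho\colon\HH_{0,n}^{\smooth}\to A$. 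The identity $\rho\circ\Phi=\id$ together with $\Phi(\Theta_{\Z}^{3})\subseteq\ker(\HH_{0,n}^{\smooth}\to\HH_{0,n}^{\top})$ then shows that both $\FF$ and its restriction to this kernel surject onto $A$. The main technical hurdle in this plan is the descent of $\rho$ to homology cobordism classes; this is a careful Mayer-Vietoris bookkeeping, which requires the $4$-dimensional boundary structure of a homology cobordism between homology cylinders (as reviewed in Section \ref{section:homcyl}) to be made fully explicit.
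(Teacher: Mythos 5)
Your overall strategy is the same as the paper's: inject $\Theta_3^{\smooth}$ into $\hgnsmooth$ by connected-summing with homology spheres (the paper's ``expansion'' map $\EE$), use Freedman to land in the kernel, and construct a retraction back to $\Theta_3^{\smooth}$ by filling holes and capping with Heegaard handlebodies (the paper's $\FF$ and $\GG$). However, there is a genuine gap in the retraction step for $g>0$.

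The claim that ``a Mayer--Vietoris comparison against $\rho(\sgn\times I)=S^3$ shows that $\rho(M)$ is an integral homology $3$--sphere'' is false in general. After filling the holes you have a homology cylinder $\hat M$ over the closed surface $\hat\Sigma_g$, and $\rho(M)=H_1\cup_{i_+}\hat M\cup_{i_-}H_2$. The first homology of this union is
\[
H_1(\rho(M)) \;\cong\; H_1(\hat\Sigma_g)\big/\bigl(L_1+\varphi(M)(L_2)\bigr),
\]
where $L_i=\ker\{H_1(\hat\Sigma_g)\to H_1(H_i)\}$ are the complementary Heegaard Lagrangians and $\varphi(M)\in\aut^*(H)$ is the action of $M$ on $H_1$. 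For the product cylinder $\varphi(M)=\id$ and $L_1+L_2=H_1(\hat\Sigma_g)$, so you get $S^3$; but a general homology cylinder has nontrivial $\varphi(M)\in\mathrm{Sp}(2g,\Z)$, and $L_1+\varphi(M)(L_2)$ need not be all of $H_1(\hat\Sigma_g)$. Concretely, for $g=1,n=0$ and $M$ the mapping cylinder of the torus automorphism swapping meridian and longitude, $\rho(M)\cong S^1\times S^2$, which is not a homology sphere. The Mayer--Vietoris data of the gluing cannot be read off from the homology of $\hat M$ alone because the isomorphisms $(i_+)_*$ and $(i_-)_*$ differ by $\varphi(M)$.

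The paper fixes exactly this by inserting a correction: it chooses a set-theoretic section $\psi\colon\aut^*(H)\to\mgzero$ of the action map (with $\psi(\id)=\id$) and defines the capping as
\[
\GG(M,i_+,i_-)=Y_1\cup_{\varsigma_1\circ i_+^{-1}}M\cup_{i_-\circ\psi(\varphi(M))^{-1}\circ\varsigma_2^{-1}}Y_2,
\]
so that the second handlebody is glued after first undoing the $H_1$-action. This guarantees a homology sphere, and because $\varphi(M)$ is a homology cobordism invariant the map descends to $\HH_{g,0}^{\smooth}\to\Theta_3^{\smooth}$. The cost is that $\GG$ is no longer a monoid/group homomorphism (since $\psi$ is only a set-theoretic section); but for the first half of the theorem one only needs a set-theoretic retraction to conclude that $\Phi$ (= the paper's $\EE$) is injective, so the argument still goes through. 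Your image $\Phi(\Theta_3^\smooth)$ consists of cylinders with trivial $H_1$-action, so the correction term is the identity there and $\rho\circ\Phi=\id$ survives the fix. For $g=0$ the issue disappears entirely ($H_1(S^2)=0$), and there your argument that $\rho$ is a genuine homomorphism by connected-sum decomposition along the separating $S^2$ is correct and agrees with the paper.

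One smaller remark: you assert that $\rho$ descends to homology cobordism classes via ``a second Mayer--Vietoris argument,'' but you should note that the descent requires the correction term to be constant on homology cobordism classes---which it is precisely because $\varphi(M)$ is a homology cobordism invariant. Without the correction the descent question does not even make sense, since $\rho$ is not well-defined as a map to $\Theta_3^{\smooth}$ for $g>0$.
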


An argument of Garoufalidis and Levine shows that the canonical map
$\mgn\to \hgn$ is injective.  (See also Proposition
\ref{prop:mgnembed}.)  It is a natural question which properties of
mapping class groups are carried over to~$\hgn$.  In particular in
\cite{GS09} Goda and Sakasai ask whether $\hgone^\smooth$ is a
perfect group and Garoufalidis and Levine
\cite[Section~5,~Question~9]{GL05} ask whether $\hgone^\smooth$ is
infinitely generated (see also \cite[Problem~11.4]{Mo06}).

The following theorem answers both questions:

\begin{theorem}\label{mainthm}
  If $b_1(\sgn)>0$, then there exists an epimorphism
  \[
  \hgn \to (\Z/2)^\infty
  \]
  which splits (i.e., there is a right inverse). In particular, the
  abelianization of $\hgn$ contains a direct summand isomorphic to
  $(\Z/2)^\infty$.
\end{theorem}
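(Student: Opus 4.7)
\emph{Proof plan.} The strategy is to use Reidemeister torsion with abelian twisted coefficients to construct, for each of infinitely many nontrivial characters $\phi\colon H_1(\sgn)\to \Z/p$, a homomorphism $\tau_\phi\colon \hgn\to \Z/2$, and then to realize arbitrary prescribed elements of $(\Z/2)^\infty$ by infecting the trivial cylinder $\sgn\times I$ along curves in $\sgn$ with carefully chosen knots in $S^3$; together these will give both the epimorphism onto $(\Z/2)^\infty$ and its splitting.

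First I would set up the invariants. Since $M$ is a homology cylinder, $H_1(M)\cong H_1(\sgn)$, so any $\phi$ pulls back to a one-dimensional representation $\pi_1(M)\to\Z/p\hookrightarrow \GL_1(\Q(\zeta_p))$; the hypothesis $b_1(\sgn)>0$ guarantees that infinitely many primes $p$ admit such nontrivial characters. The twisted Reidemeister torsion $\tau(M,\phi)\in\Q(\zeta_p)^\times$ is defined up to multiplication by $\pm\zeta_p^k$, and the multiplicativity of torsion under juxtaposition along the boundary makes $M\mapsto \tau(M,\phi)$ into a monoid homomorphism into $\Q(\zeta_p)^\times/\pm\langle\zeta_p\rangle$.

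Next I would prove invariance under homology cobordism after passing to an appropriate $\Z/2$ subquotient. If $W$ is a homology cobordism from $M$ to $M'$, then $\phi$ extends to $\pi_1(W)$ because $H_1(W)\cong H_1(\sgn)$, and a Poincar\'e--Lefschetz duality argument on the twisted chain complex of $(W,\partial W)$ forces $\tau(M,\phi)/\tau(M',\phi)$ to be a norm from the maximal real subfield $\Q(\zeta_p+\zeta_p^{-1})$, up to the standard indeterminacy. Dividing $\Q(\zeta_p)^\times/\pm\langle\zeta_p\rangle$ by this subgroup of norms and then reducing modulo squares yields a canonical $\Z/2$-valued invariant $\tau_\phi$, which by multiplicativity of torsion is a group homomorphism $\hgn\to\Z/2$.

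The main obstacle is realization and independence. For each pair $(p,\phi)$ in an infinite family, I would pick a simple closed curve $\gamma\subset\sgn$ with $\phi([\gamma])\neq 0$ and a knot $K\subset S^3$, and form the homology cylinder $M_{K,\gamma}$ by infecting $\sgn\times I$ along a pushoff of $\gamma$ using $K$. A satellite formula for Reidemeister torsion should compute $\tau(M_{K,\gamma},\phi)$ as $\Delta_K(\zeta_p)$ up to indeterminacy, while $\tau_{\phi'}(M_{K,\gamma})=0$ for characters $\phi'$ with $\phi'([\gamma])=0$. The hard step is the number-theoretic task of choosing an infinite family of quadruples $(p,\phi,\gamma,K)$ so that the resulting matrix of $\tau_\phi$-values is upper triangular with nontrivial diagonal; this reduces to arranging that $\Delta_K(\zeta_p)$ is a nonsquare non-norm in $\Q(\zeta_p)$, for which standard realization theorems for Alexander polynomials combined with classical cyclotomic number theory should suffice. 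The explicit family $\{M_{K,\gamma}\}$ then provides the right inverse to $\hgn\to(\Z/2)^\infty$.
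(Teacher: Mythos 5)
Your proposal takes a Casson--Gordon style route (cyclotomic twisted torsion, norm obstructions, reduction mod squares) rather than the paper's Fox--Milnor style route (untwisted torsion over the full group ring $\Z[H]$, $H=H_1(\sgn)$, with invariants extracted from exponents of irreducible factors). The difference is not merely cosmetic, and as written your plan has several real gaps.

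The most serious one is multiplicativity. The torsion of a stacked cylinder is \emph{not} $\tau(M)\tau(N)$: it is $\tau(M)\cdot\varphi(M)(\tau(N))$, where $\varphi(M)\in\Aut^*(H)$ is the action of $M$ on $H_1(\sgn)$ (Proposition~\ref{prop:deltasum}). Once you specialize through a single character $\phi\colon H\to\Z/p$, the correction term becomes a twisted torsion for the \emph{different} character $\phi\circ\varphi(M)$, so $M\mapsto\tau(M,\phi)\in\Q(\zeta_p)^\times/\pm\langle\zeta_p\rangle$ is not a monoid homomorphism on $\cgn$. (Your infection examples happen to act trivially on $H$, so this issue is invisible on the subgroup they generate, but the claimed homomorphism must be defined on all of $\hgn$.) The paper deals with exactly this by quotienting $Q(H)^\times$ by the subgroup $A(H)$ generated by $p^{-1}\varphi(p)$; picking a single $\phi$ breaks that $\Aut^*(H)$-symmetry irreparably unless you either average over orbits or work in a quotient where the action is trivial, neither of which your sketch does.

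Two further points. First, ``dividing by norms and then reducing modulo squares yields a canonical $\Z/2$-valued invariant'' is not automatic: $\Q(\zeta_p)^\times$ modulo the image of $q\mapsto q\bar q$ is not $\Z/2$ in general, so you still need to produce an explicit homomorphism to $\Z/2$ (e.g.\ via a real place or a Hermitian discriminant), and then show your examples survive it. Second, the realization and linear-independence step is deferred to ``standard realization theorems combined with classical cyclotomic number theory,'' which is precisely where a Casson--Gordon style argument gets delicate; the paper avoids the number theory entirely by staying over the UFD $\Z[H]$, where Lemma~\ref{lemma:aut-invariant} gives an elementary, combinatorial invariant (the multiset $C(p)$ of coefficient patterns) that separates the irreducible self-dual factors $\Delta_{K_i}(h)$ on the nose. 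I would encourage you to either follow that integral route, or, if you want to keep the cyclotomic specialization, first pass to $Q(H)^\times/A(H)N(H)$ and only then try to build a map to $\Z/2$ compatible with the $\Aut^*(H)$-action.
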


Note that Theorem~\ref{mainthm} also implies that $\hgn$ is not
finitely related, since for a finitely related group its
abelianization is also finitely related. Also refer to
Remark~\ref{remark:proof of mainthm} for a slightly more refined
statement.

In many cases we can actually strengthen the result:

\begin{theorem}\label{mainthm2}%
  If $n>1$, then there exists an epimorphism
  \[
  \hgn \to \Z^\infty.
  \]
  Furthermore, the abelianization of $\hgn$ contains a direct summand
  isomorphic to $(\Z/2)^\infty\bigoplus\Z^\infty$.
\end{theorem}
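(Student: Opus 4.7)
The strategy builds on the torsion-based construction used for Theorem~\ref{mainthm}, adding two ingredients: a mechanism for removing the $\Z/2$ indeterminacy in the torsion when $n>1$, and a verification that the resulting $\Z$-valued homomorphisms combine with the $(\Z/2)^\infty$ factor as a direct sum.

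The first step is to pinpoint the origin of the $\Z/2$ indeterminacy. For a homology cylinder $M$ over $\sgn$ and a suitable one-dimensional representation $\v\colon H_1(M)\to \F^\times$, the Reidemeister torsion of $M$ lies in $\F^\times$ modulo a group of units that typically contains a sign. When $n=1$ this sign ambiguity is forced by the symmetry of torsion, so only a $\Z/2$-valued invariant survives. When $n>1$ the additional boundary curves provide distinguished elements of $\pi_1(M)$ whose $\v$-images can be used to canonically fix the sign and lift the invariant to a $\Z$-valued homomorphism on $\hgn$.

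Second, I would construct an infinite family of representations $\{\v_i\}$ of $H_1(\sgn)$ into a fixed field together with an infinite family of test homology cylinders $\{M_j\}$, each obtained from $\sgn\times I$ by infection along suitably chosen curves using knots with prescribed Alexander-module behavior. Arranging the data in a triangular fashion ensures that the induced map $\hgn\to\Z^\infty$ is surjective and admits a splitting over the subgroup generated by the $M_j$.

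Third, I would combine the result with Theorem~\ref{mainthm}. By choosing the infection curves for the two families to be disjoint and the infection knots to be suitably independent, the two invariants are detected simultaneously, so the two direct summands combine without interference to yield $(\Z/2)^\infty\oplus\Z^\infty$ inside the abelianization of $\hgn$. The main obstacle is the first step: one must exhibit a canonical sign-normalization of the torsion using the extra boundary components, and verify that the resulting invariant is additive under juxtaposition of homology cylinders. Once that is established, the remaining combinatorial and independence arguments extend those from the proof of Theorem~\ref{mainthm} essentially verbatim.
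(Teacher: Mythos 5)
Your first step misidentifies where the $\Z/2$ comes from, and this misunderstanding derails the rest of the plan. The $(\Z/2)^\infty$ target in Theorem~\ref{mainthm} is \emph{not} a sign indeterminacy in the torsion that could be normalized away by exploiting extra boundary components. It is forced by the subgroup of norms $N(H)=\{\pm h\, q\bar q\}$: after dividing by $N(H)$ (as Theorem~\ref{thm:homcob} requires), the exponent of any \emph{self-dual} irreducible factor $\lambda$ of $\tau(M)$ is only well defined modulo $2$, because $\lambda^2 \doteq \lambda\bar\lambda$ lies in $N(H)$. No sign refinement of Reidemeister torsion changes this. To obtain $\Z$-valued homomorphisms one needs \emph{non-self-dual} (asymmetric) irreducible factors $\mu \not\sim \bar\mu$, for which $e_\mu - e_{\bar\mu}$ survives the quotient by $N(H)$; this is exactly the map $\Theta_\mu$ in Proposition~\ref{prop:complete}.

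This is why your second step cannot work as described: you propose infecting $\sgn\times I$ along curves using knots, but the Alexander polynomial of a knot is always symmetric, so Theorem~\ref{theorem:string-link-sum-formula} shows the resulting torsion $\Delta_K(h)$ is always self-dual. Knot infection (or more generally tying in string links) is precisely the construction already used for the $(\Z/2)^\infty$ part and cannot produce the asymmetric factors needed for $\Z^\infty$. The paper instead builds explicit homology cylinders $M(a)$ by a handle-attaching construction (Section~\ref{subsection:exmaple-handle-decomp}, see also the pretzel-link examples of Section~\ref{section:pretzel}); the torsion $p_a = 1+(y-1)x+\cdots+y^{a-1}(y-1)x^a$ is shown to be irreducible, non-self-dual, and the $p_a$ are pairwise inequivalent and non-dual to one another, which is what drives the surjection onto $\bigoplus \Z$. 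You would also need to address the case $(g,n)=(0,2)$ separately, since there the torsion is always symmetric and the $\Z^\infty$ instead comes from Levine's algebraic knot concordance group via $\HH_{0,2}\cong\Z\oplus\CZ$. Your third (combination) step is fine in spirit, though the paper's argument is even simpler: the images of the two splittings intersect trivially for purely order-theoretic reasons, so no geometric disjointness of infection curves is needed.
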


We remark that for the special case of $(g,n)=(0,2)$, this is a
consequence of Levine's work on knot concordance \cite{Le69a,Le69b}
since one can easily see that $\HH_{0,2}^{\top}$ maps onto Levine's
algebraic knot concordance group.
   The general cases
of Theorem~\ref{mainthm2} for $n>2$ and for $n=2$ and $g>0$ are new.

In order to prove Theorems \ref{mainthm} and \ref{mainthm2} we will
employ the torsion invariant of a homology cylinder, first introduced
by Sakasai (e.g., see \cite[Section~11.1.2]{Sa06},
\cite[Definition~6.5]{Sa08} and \cite[Definition~4.4]{GS08}).  In
Section \ref{section:torsion-invariant} we recall the definition of
the torsion of a homology cylinder and we study the behavior of
torsion under stacking and homology cobordism.  The result can be
summarized as a group homomorphism
\[
\tau\colon \hgn \to \frac{Q(H)^\times}{A(H)N(H)}.
\]
Here $H=H_1(\Sigma_{g,n})$ and $Q(H)^\times$ is the multiplicative
group of nonzero elements in the quotient field of the group ring
$\Z[H]$.  Loosely speaking, $A(H)$ reflects the action of surface
automorphisms on $H$, and $N(H)$ is the subgroup of ``norms''
in $Q(H)^\times$.  (For details, see
Section~\ref{section:torsion-invariant}.)

An interesting point is that torsion invariants of homology cylinders
may be \emph{asymmetric}, in contrast to the symmetry of
the Alexander polynomial of knots.  Indeed, in
Section~\ref{section:homomorphisms}, we extract infinitely many
$(\Z/2)$--valued and $\Z$--valued homomorphisms of
$Q(H)^\times/A(H)N(H)$ from symmetric and asymmetric irreducible
factors, respectively. Theorems \ref{mainthm} and \ref{mainthm2} now
follow from the explicit construction of examples in Section
\ref{section:construction-computation}.

We remark that Theorem~\ref{mainthm2} covers all the possibilities of
the asymmetric case, since it can be seen that for either $n<2$ or
$(g,n)=(0,2)$ the torsion of a homology cylinder over $\Sigma_{g,n}$
is always symmetric in an appropriate sense (see
Section~\ref{subsection:symmetry-asymmetry} for details.)

We remark that our main results hold even modulo the mapping class
group---the essential reason is that our torsion invariant is trivial
for homology cylinders associated to mapping cylinders.  More
precisely, denoting by $\langle\mgn\rangle$ the normal subgroup of
$\hgn$ generated by $\mgn$, the torsion homomorphism $\tau$ actually
factors as
\[
\tau\colon \hgn \to \frac{\hgn}{\langle\mgn\rangle} \to
\frac{Q(H)^\times}{A(H)N(H)}
\]
and Theorems~\ref{mainthm} and~\ref{mainthm2} hold for $\hgn/\langle
\mgn\rangle$ as well as~$\hgn$.

In Section \ref{section:pretzel}, we study examples of homology
cylinders which naturally arise from Seifert surfaces of pretzel
links.  We compute the torsion invariant and prove that these homology
cylinders span a $\Z^\infty$ summand in the abelianization~of
$\HH_{0,3}$.

Finally in Section~\ref{section:torelli}, we consider the ``Torelli
subgroup'' $\ihgn$ of $\hgn$ which is the homology cylinder analogue
of the Torelli subgroup of mapping cylinders.  We prove that the
conclusions of Theorems \ref{mainthm} and \ref{mainthm2} hold for
the Torelli subgroup~$\ihgn$, but for a larger group of surfaces.
(See Theorem~\ref{thm:cyltorelli} for details.) This extends work of
Morita's \cite[Corollary~5.2]{Mo08} to a larger class of surfaces.

In this paper, manifolds are assumed to be compact, connected, and
oriented. All homology groups are with respect to integral
coefficients unless it says explicitly otherwise.

\subsection*{Acknowledgment}

This project was initiated while the three authors visited the
Mathematics Department of Indiana University at Bloomington in July
2009.  We wish to express our gratitude for the hospitality and we
are especially grateful to Chuck Livingston and Kent Orr.  The
authors would like to thank Irida Altman, Jake Rasmussen, Dan Silver
and Susan Williams for helpful conversations. We are very grateful
to Takuya Sakasai for suggesting a stronger version of Theorem
\ref{thm:topsmooth} to us and for informing us of Morita's work in
\cite{Mo08}. The authors thank an anonymous referee for helpful
comments, in particular the comment about
Proposition~\ref{prop:mgnembed}. The first author was supported by
the National Research Foundation of Korea(NRF) grant funded by the
Korea government(MEST) (No. 2007-0054656 and 2009-0094069). The last
author was supported by the National Research Foundation of
Korea(NRF) grant funded by the Korea government(MEST) (No.
2009-0068877 and 2009-0086441).

\section{Homology cylinders and their cobordism}
\label{section:homcyl}

In this section we recall basic definitions and preliminaries on
homology cylinders, the notion of which goes back to Goussarov \cite{Go99},
Habiro \cite{Ha00}, and Garoufalidis and Levine \cite{GL05},
\cite{Le01}.

\subsection{Cobordism classes of homology cylinders}

Given $g\geq 0$ and $n\geq 0$ we fix, once and for all, a surface
$\Sigma_{g,n}$ of genus $g$ with $n$ boundary components.  When $g$
and $n$ are obvious from the context, we often denote $\Sigma_{g,n}$
by~$\Sigma$.  A \emph{homology cylinder $(M,i_+,i_-)$ over $\Sigma$}
is defined to be a $3$--manifold $M$ together with injections $i_+$,
$i_- \colon \Sigma \to \partial M$ satisfying the following:

\begin{enumerate}
\item $i_+$ is orientation preserving and $i_-$ is orientation
  reversing.
\item $\partial M=i_+(\Sigma)\cup i_-(\Sigma)$ and $i_+(\Sigma)\cap
  i_-(\Sigma)=i_+(\partial \Sigma)=i_-(\partial \Sigma)$.
\item $i_+|_{\partial \Sigma}=i_-|_{\partial \Sigma}$.
\item $i_+,i_-\colon H_*(\Sigma)\to H_*(M)$ are isomorphisms.
\end{enumerate}

\begin{example}
  \label{example:homology-cylinder}
\leavevmode\Nopagebreak
\begin{enumerate}
\item Let $\varphi \in \mgn$.
  Then $\varphi$ gives rise to a homology cylinder
  \[
  M(\varphi)=(\sgn\times [0,1]/\mathord{\sim},\,
  i_+=\mathord{\id}\times 0 ,\, i_-=\varphi\times 1).
  \]
  where $\sim$ is given by $(x,s)\sim(x,t)$ for $x\in \partial \sgn$
  and $s,t\in [0,1]$. If
  $\varphi$ is the identity, then we will refer to the resulting
  homology cylinder as the \emph{product homology cylinder}.
\item Let $K$ be a knot of genus $g$ such that the Alexander
  polynomial $\Delta_K$ is monic and of degree~$2g$.  Let $\Sigma$ be
  a minimal genus Seifert surface in the exterior $X$ of~$K$. Then $X$
  cut along $\Sigma$ is a homology cylinder over $\S_{g,1}$ in a
  natural way (e.g., see \cite[Proposition~3.1]{Ni07} or \cite{GS08}
  for details).
\end{enumerate}
\end{example}

Two homology cylinders $(M,i_+,i_-)$ and $(N,j_+,j_-)$ over
$\Sigma=\Sigma_{g,n}$ are called \emph{isomorphic} if there exists an
orientation-preserving diffeomorphism $f\colon M\to N$ satisfying
$j_\pm=f\circ i_\pm$.  We denote by $\CC_{g,n}$ the set of all
isomorphism classes of homology cylinders over $\sgn$.  A product
operation on $\cgn$ is given by stacking:
\[
(M,i_+,i_-)\cdot (N,j_+,j_-):=(M\cup_{i_-\circ (j_+)^{-1}} N, i_+, j_-).
\]
This turns $\cgn$ into a monoid.  The unit element is given by the
product homology cylinder.

Two homology cylinders $(M,i_+,i_-)$ and $(N,j_+,j_-)$ over $\Sigma$
are called \emph{smoothly homology cobordant} if there exists a
compact oriented \emph{smooth} $4$--manifold $W$ such that
\[
\partial W=M\cup (-N)\, /\, i_+(x)=j_+(x),\, i_-(x)=j_-(x) \quad (x\in \Sigma),
\]
and such that the inclusion induced maps $H_*(M)\to H_*(W)$ and
$H_*(N)\to H_*(W)$ are isomorphisms.  We denote by $\hgnsmooth$ the
set of smooth homology cobordism classes of elements in~$\cgn$. The monoid
structure on $\cgn$ descends to a group structure on $\hgnsmooth$,
where the inverse of a mapping cylinder $(M,i_+,i_-)$ is given by
$(-M,i_-,i_+)$.  (We refer to \cite[p.~246]{Le01} for details).

If there is a topological $4$--manifold $W$ satisfying the above
conditions, then we say that $(M,i_+,i_-)$ and $(N,j_+,j_-)$ are
\emph{topologically homology cobordant}.  We denote the resulting
group of homology cobordism classes by $\hgntop$.  Note that there
exists a canonical surjection $\hgnsmooth\to \hgntop$. We will see in
the following sections that this map is in general not an isomorphism.
\begin{remark}
  In the original papers of Garoufalidis and Levine \cite{GL05, Le01}
  and in \cite{GS09} the authors focus on the smooth case and denote
  the group $\hgn^\smooth$ by~$\hgn$.
\end{remark}

\subsection{Examples}\label{section:examples}
  In this
section we will discuss three types of examples:
\begin{enumerate}
\item surface automorphisms,
\item homology cobordism classes of integral homology spheres, and
\item concordance classes of (framed) knots in integral homology spheres.
\end{enumerate}

\subsubsection*{Surface automorphisms and homology cylinders}
First recall that $\varphi\in \mgn$ gives rise to a homology
cylinder~$M(\varphi)$.  Note that if $\varphi,\psi\in \mgn$, then
$M(\varphi)\cdot M(\psi)$ is isomorphic to $M(\varphi\circ \psi)$.
In particular the map $\mgn\to \cgn$ descends to a morphism of
monoids. Proposition~\ref{prop:mgnembed} below says that we can view
the mapping class group $\mgn$ as a subgroup of $\hgn$. To prove the
proposition, we need the following folklore theorem:

\begin{theorem}\label{thm:folklore}
Let $\S$ be a surface with one boundary component, possibly with
punctures. Let $*\in \partial \S$ be a base point. Suppose $h\colon \S\to \S$ is a homeomorphism
with the following properties:
    \begin{enumerate}
    \item $h$ is the identity on $\partial \S$,
    \item $h_*:\pi_1(\S,*) \to \pi_1(\S,*)$ is the identity, and
    \item $h$ fixes each puncture.
    \end{enumerate}
Then $h$ is isotopic to the identity where the isotopy restricts to
the identity on $\partial \S$.
\end{theorem}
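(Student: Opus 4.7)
The plan is to cut $\Sigma$ along a suitable system of arcs into a disk and then invoke Alexander's trick. First, I would choose a finite collection of simple arcs $\alpha_1,\ldots,\alpha_N$ in $\Sigma$ emanating from the basepoint $*\in\partial\Sigma$, pairwise disjoint away from $*$, such that cutting $\Sigma$ open along their union yields a disk. Concretely, for $\Sigma$ of genus $g$ with one boundary component and $p$ punctures, take $2g$ loops at $*$ realizing a standard free generating set of $\pi_1(\Sigma,*)$ together with one simple arc from $*$ to each puncture; cutting along their union produces a polygonal disk with the punctures appearing on its boundary.

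The next step is to show that for each $i$ the image $h(\alpha_i)$ is ambient isotopic to $\alpha_i$ rel endpoints, through simple arcs disjoint from the other punctures. For the loop arcs this is immediate from hypothesis (2): $h_\ast=\id$ gives $[h(\alpha_i)]=[\alpha_i]$ in $\pi_1(\Sigma,\ast)$, hence $h(\alpha_i)\simeq\alpha_i$ rel $\ast$. For an arc ending at a puncture, hypothesis (3) matches the endpoints, and the rel-endpoint homotopy classes of arcs from $\ast$ to that puncture form a torsor for the appropriate subgroup of $\pi_1(\Sigma,\ast)$ on which $h$ acts trivially; this again forces $h(\alpha_i)\simeq\alpha_i$ rel endpoints. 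I would then appeal to the Epstein--Baer theorem that two rel-endpoint homotopic simple arcs on a surface are ambient isotopic rel endpoints, by an isotopy that can moreover be taken rel $\partial\Sigma$ and fixing each puncture.

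Having straightened $h(\alpha_1)$ onto $\alpha_1$ via an ambient isotopy of $\Sigma$ supplied by the isotopy extension theorem, I would iterate: apply the Epstein--Baer theorem inside the cut-open subsurface $\Sigma\setminus(\alpha_1\cup\cdots\cup\alpha_{i-1})$ so that, at the $i$-th step, $h(\alpha_i)$ is pulled back onto $\alpha_i$ without disturbing any previously straightened arc, because those arcs now lie in the boundary of the subsurface on which the isotopy is taken to be the identity. After $N$ such steps, $h$ has been isotoped rel $\partial\Sigma$ to a homeomorphism that is the identity on the $1$--complex $\partial\Sigma\cup\alpha_1\cup\cdots\cup\alpha_N$. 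Cutting along these arcs then produces a closed disk on whose boundary $h$ restricts to the identity, and Alexander's trick supplies the final isotopy to $\id_\Sigma$ rel $\partial\Sigma$.

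The hard part will be arranging the Epstein--Baer straightening with all the auxiliary constraints simultaneously: rel $\partial\Sigma$, not moving any puncture, and (at later stages) keeping previously fixed arcs pointwise fixed. The cleanest organization of this is precisely the ``work in the cut-open subsurface'' trick described above, but making sure that the inductive cut-open surface remains a surface of the type to which Epstein--Baer applies, and that the isotopies glue back together to an ambient isotopy of $\Sigma$, is the principal technical point that requires some care.
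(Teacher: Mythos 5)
Your overall strategy parallels the paper's: cut $\Sigma$ along a system of curves based at $*$, use $h_* = \id$ to straighten the system, then conclude on the cut-open surface. You cut further, though. The paper cuts only along circles and lands in a punctured disk, finishing with a braid-group argument (the faithfulness of the pure braid group's action on the fundamental group of the punctured disk); you cut additionally along arcs to the punctures, landing in an honest disk, and finish with Alexander's trick. Yours is the more elementary closing step, but it requires straightening arcs to the punctures, and that is where your proof has a gap.

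The claim that rel-endpoint homotopy classes of arcs from $*$ to $p_i$ ``form a torsor for the appropriate subgroup of $\pi_1(\Sigma,*)$ on which $h$ acts trivially; this again forces $h(\alpha_i)\simeq\alpha_i$'' is not a valid deduction: these arc classes form a torsor over $\pi_1(\Sigma,*)$, and knowing that $h$ acts trivially on the group only makes $h$'s action on the torsor equivariant, and a $G$-equivariant bijection of a $G$-torsor can be any translation. What you actually get is $h(\alpha_i)\simeq w\cdot\alpha_i$ rel endpoints for some loop $w\in\pi_1(\Sigma,*)$, and a further argument is needed to kill $w$. One that works: letting $c_i\in\pi_1(\Sigma,*)$ be the class of the loop around $p_i$ based via $\alpha_i$, the identity $h_*(c_i)=c_i$ combined with $h(\alpha_i)\simeq w\alpha_i$ gives $wc_iw^{-1}=c_i$; in the free group $\pi_1(\Sigma,*)$ the centralizer of the primitive element $c_i$ is $\langle c_i\rangle$, so $w=c_i^k$ for some $k$; and $c_i^k\alpha_i$ is isotopic rel endpoints to $\alpha_i$ because the $k$ wraps around $p_i$ can be absorbed into the puncture. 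Without some such step the arc straightening is incomplete. Your other concern, that the iterative Epstein--Baer straightening in the cut-open subsurfaces needs care, is legitimate but resolvable by standard means: once previously straightened arcs are fixed pointwise, the cut-open subsurface is $\pi_1$-injective in $\Sigma$, so rel-endpoint homotopy in $\Sigma$ between $\alpha_i$ and $h(\alpha_i)$ already lives in the subsurface, where Epstein--Baer can be applied.
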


\begin{proof}
The theorem is well-known, and we only give an outline of the proof. We
choose disjoint circles $\g_i$
in $\S$ based at $*$ such that $\S$ cut along the $\g_i$ is a
punctured disk. Since $h_*$ is the identity, we may assume (after applying an isotopy whose restriction to $\partial \S$
is the identity) that $h$
fixes each $\g_i$. Now the map on the punctured disk induced from $h$
is isotopic to the identity with $\partial \S$ fixed pointwise. This
can be shown, for example, using the fact that the map from the pure
braid group to $\pi_1(\S)$ is injective.
\end{proof}

\begin{proposition} \label{prop:mgnembed}
The  map $\mgn\to \cgn\to \hgn$ is injective.
\end{proposition}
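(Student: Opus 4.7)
The plan is to show that any $\varphi \in \mgn$ lying in the kernel of $\mgn \to \cgn \to \hgn$ satisfies $\varphi_* = \id$ on $\pi_1(\Sigma_{g,n}, *)$ (based at some $* \in \partial \Sigma_{g,n}$), at which point Theorem~\ref{thm:folklore} will give that $\varphi$ is isotopic to the identity. I assume $n \ge 1$; the closed case $n = 0$ requires essentially the same argument using the residual nilpotence of closed surface groups and the Dehn--Nielsen--Baer theorem in place of Theorem~\ref{thm:folklore}.

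Let $W$ be a $4$-manifold realizing a homology cobordism between $M(\varphi)$ and the product cylinder $N = M(\id) = \Sigma \times I$. First I would unwind the boundary identifications. In $N$ the end inclusions $j_+$ and $j_-$ are homotopic, so $(j_+)_* = (j_-)_*$ as maps $\pi_1(\Sigma) \to \pi_1(W)$; the identifications $i_\pm(x) = j_\pm(x)$ in $\partial W$ then yield $(i_+)_* = (i_-)_*$. A direct computation inside $M(\varphi) = \Sigma \times [0,1]/\mathord{\sim}$ shows $(i_+)_* = \id$ and $(i_-)_* = \varphi_*$ as automorphisms of $\pi_1(M(\varphi)) = \pi_1(\Sigma)$. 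Writing $\iota : \pi_1(\Sigma) \to \pi_1(W)$ for the inclusion-induced map, we conclude $\iota \circ \varphi_* = \iota$.

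Next I would upgrade this to $\varphi_* = \id$ via Stallings' theorem. Since $H_*(M(\varphi)) \to H_*(W)$ is an isomorphism (by definition of homology cobordism), Stallings' theorem implies that $\iota$ induces an isomorphism on every lower-central-series quotient $\pi_1(\Sigma)/(\pi_1\Sigma)_k \cong \pi_1(W)/\pi_1(W)_k$. Combined with $\iota \circ \varphi_* = \iota$, this forces $\varphi_*(x) \equiv x$ modulo $(\pi_1\Sigma)_k$ for every $x$ and every $k$. Because $\partial \Sigma_{g,n} \ne \emptyset$, $\pi_1(\Sigma)$ is free and hence residually nilpotent, so $\varphi_* = \id$.

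Finally, I would apply Theorem~\ref{thm:folklore}. For $n = 1$ it applies directly. For $n \ge 2$, cap off $n-1$ of the boundary components with once-punctured disks to produce a surface $\Sigma'$ with one boundary component and $n-1$ punctures, and extend $\varphi$ by the identity on the caps. Since $\varphi$ fixes $\partial \Sigma$ pointwise, the extension fixes $\partial \Sigma'$ and each puncture, and because $\Sigma \hookrightarrow \Sigma'$ is a homotopy equivalence the induced action on $\pi_1(\Sigma')$ is still trivial. Theorem~\ref{thm:folklore} yields an isotopy to the identity on $\Sigma'$ fixing $\partial \Sigma'$ pointwise, which a standard isotopy-extension argument near the punctures promotes to an isotopy on $\Sigma$ fixing all of $\partial \Sigma$ pointwise. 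I expect the main obstacle to be the middle step: converting the homological condition provided by $W$ into a genuine equality in $\Aut(\pi_1 \Sigma)$. Stallings' theorem together with residual nilpotence of free groups is precisely the tool for this, and its hypotheses ($H_1$-isomorphism and $H_2$-surjectivity) are immediate from the definition of homology cobordism.
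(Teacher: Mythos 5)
Your Stallings-plus-residual-nilpotence step to get $\varphi_* = \mathrm{id}$ on $\pi_1(\Sigma, *)$ is exactly the approach in the paper, but the final step is not: the ``standard isotopy-extension argument near the punctures'' that you invoke to pass from the capped-off surface $\Sigma'$ back to $\Sigma$ does not exist, and in fact the implication it asserts is false. The problem is that boundary Dehn twists $T_{\partial_i}$ (for $i\ne k$, where $\partial_k$ is the uncapped boundary component) act trivially on $\pi_1(\Sigma,*)$, and when extended by the identity over the punctured caps they become isotopic to the identity on $\Sigma'$ rel $\partial\Sigma'$ -- yet they are not isotopic to the identity on $\Sigma$ rel $\partial\Sigma$. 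Concretely, if you ran your last paragraph with $\psi = T_{\partial_i}$ in place of $\varphi$, every step would go through and you would ``conclude'' that $T_{\partial_i} = \mathrm{id}$ in $\mgn$, which is wrong. So after one capping you have only shown that $\varphi$ lies in the kernel of $\mgn \to \pmkgn$, which is the rank-$(n-1)$ free abelian group generated by the Dehn twists $T_{\partial_i}$, $i\ne k$, rather than that $\varphi$ is trivial.

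The paper closes exactly this gap by letting the basepoint component $\partial_k$ vary: for each $k$ they show $[\varphi]$ lies in $\Ker\{\mgn\to\pmkgn\}$, then use the known fact that the $n$ boundary Dehn twists span a central $\Z^n$ in $\mgn$ and that $\Ker\{\mgn\to\pmkgn\}$ is precisely the $\Z^{n-1}$ generated by the twists away from $\partial_k$; the intersection over all $k$ of these subgroups is trivial, forcing $\varphi = \mathrm{id}$. You would also want to treat the annulus case $(g,n)=(0,2)$ separately (as the paper does, reducing it to a concordance statement), since there $\mathcal{M}_{0,2}\cong\Z$ is generated by a boundary Dehn twist and your capping argument collapses it entirely.
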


\begin{proof}
The proposition was proved by Garoufalidis and Levine in the case
that $n=1$ and in the smooth category.  (See
\cite[Section~2.4]{GL05} and \cite[Section~2.1]{Le01}.) Using their
arguments partially, we will show that the proposition holds for any
$n\ge 0$ (and in the topological category as well).

First note that if $(g,n)=(0,0)\mbox{ or } (0,1)$, i.e., if $\sgn$
is a sphere or a disk, then it is well-known that any orientation
preserving diffeomorphism is isotopic to the identity. If
$(g,n)=(0,2)$, i.e., if $\sgn$ is an annulus, then it is known that
$\mgn\cong \Z$ and it injects into $\hgn$ (for example, this can be
shown using the arguments in the subsection below entitled {\it
Concordance of (framed) knots in homology $3$--spheres}). Therefore
we henceforth assume that $(g,n)\ne (0,0), (0,1), (0,2)$.

Suppose $\varphi\colon \sgn\to \sgn$ is an orientation-preserving
diffeomorphism such that
    \begin{enumerate}
    \item $\varphi$ restricts to the identity on $\partial \sgn$,
    and
    \item $[\varphi]\in \Ker\{\mgn\to \hgn\}$.
    \end{enumerate}
Now fix $k\in \{1,2,\ldots, n\}$ and fix a base point $*$ lying in
the $k$-th component, say $\partial_k$, of $\partial \sgn$. We write
$\pi:= \pi_1(\sgn,\,\, *)$. We denote by $\pi_l$ the lower central
series of $\pi$ defined inductively by $\pi_1:=\pi$ and
$\pi_l:=[\pi,\pi_{l-1}]$, $l>1$. The argument of Garoufalidis and
Levine (which builds on Stallings' theorem \cite{St65}) shows that
if $M(\varphi)$ is homology cobordant to the product homology
cylinder, then $\varphi_*\colon\pi/\pi_l\to \pi/\pi_l$ is the
identity map for any~$l$. Since $\bigcap_l \pi_l$ is trivial, this
implies that $\varphi_*\colon \pi \to \pi$ is the identity.

We denote by $\pmkgn$ the set of equivalence classes of
orientation-preserving diffeomorphisms $h\colon \sgn \to \sgn$ such
that
    \begin{enumerate}
    \item $h$ is the identity on $\partial_k$, and
    \item $h$ fixes each boundary component setwise,
    \end{enumerate}
where we say that two such maps are equivalent if they are related
by an isotopy which fixes  $\partial_k$ pointwise. Then
$[\varphi]=[\mbox{identity}]$ in $\pmkgn$ by
Theorem~\ref{thm:folklore}.

Now consider the map $\mgn\to \pmkgn$. It is known that the $n$ Dehn
twists along boundary components of $\sgn$ generate a central
subgroup isomorphic to $\Z^n$ in $\mgn$, and $\Ker\{\mgn\to
\pmkgn\}$ is the subgroup generated by the $n-1$ Dehn twists along all, but the $k$-th boundary component. (For example, see \cite[Section~4.2]{FM09}.)
Therefore, $[\varphi]\in \bigcap_k \Ker\{\mgn\to \pmkgn\} =
\{[\mbox{identity}]\}$ in $\mgn$.
\end{proof}

Note that this shows that $\hgn$ is non-abelian provided that $g>0$ or
$n>2$. It is straightforward to see that in the remaining cases $\hgn$
is abelian.

In the following we will see that the cobordism groups of homology
cylinders over the surfaces $\S_{0,n}$ with $n=0,1,2$ have been
studied under different names for many years.

\subsubsection*{Homology cobordism of integral homology $3$--spheres}
We first consider the case $n=0,1$.  Recall that oriented integral
homology $3$--spheres form a monoid under the connected sum operation.
Two oriented integral homology $3$--spheres $Y_1$ and $Y_2$ are called
smoothly (respectively topologically) cobordant if there exists a
smooth (respectively topological) $4$--manifold cobounding $Y_1$
and~$-Y_2$.  We denote by $\Theta_{3}^\smooth$ (respectively
$\Theta_3^{\top}$) the group of smooth (respectively topological)
cobordism classes of integral homology $3$--spheres.

For $n=0,1$ the group $\HH_{0,n}^\smooth$ (respectively
$\HH_{0,n}^{\top}$) is naturally isomorphic to the group
$\Theta_3^\smooth$ (respectively $\Theta_3^{\top}$) (e.g., see\
\cite[p.~59]{Sa06}).  Furuta \cite{Fu90} and Fintushel-Stern
\cite{FS90} showed that $\Theta_{3}^\smooth$ has infinite rank.  (See
also \cite[Section~7.2]{Sav02}.)  On the other hand it follows from
the work of Freedman \cite{Fr82} that
$\HH_{0,n}^{\top}=\Theta_{3}^{\top}$ is the trivial group.  (See also
\cite[Corollary~9.3C]{FQ90}.)  This shows in particular that the
homomorphism $\HH_{0,n}^\smooth\to \HH_{0,n}^\top$ is not an
isomorphism for $n=0,1$.

\subsubsection*{Concordance of (framed) knots in homology $3$--spheres}
We now turn to the case $n=2$, i.e., homology cylinders over the
surface $\Sigma_{0,2}$ which we henceforth identify with the annulus
$S^1\times [0,1]$.  Let $K\subset Y$ be an oriented knot in an
integral homology $3$--sphere. Let $M$ be the exterior of~$K$.  It is
not difficult to see there are pairs $(i_+, i_-)$ of maps
$\S_{0,2}\to \partial M$ satisfying (1)--(4) in the definition of a
homology cylinder and satisfying the condition that $i_+(S^1\times 0)$
is a meridian of~$K$.  Furthermore, the isotopy types of such $(i_+,
i_-)$ are in 1--1 correspondence with framings on~$K$.  Indeed, the
linking number of $K$ and the closed curve $i_-(*\times[0,1])\cup
i_+(*\times[0,1])$ gives rise to a canonical 1--1 correspondence
between the set of framings and~$\Z$.  Conversely, a homology cylinder
$(M,i_+,i_-)$ over $\Sigma_{0,2}$ determines an oriented knot endowed
with a framing in the integral homology sphere, which is given by
attaching a $2$--handle along $i_+(S^1\times 0)$ and then attaching a
$3$--handle.

We say that $K_1\subset Y_1$ and $K_2\subset Y_2$ are
\emph{smoothly concordant} if there exists a smooth cobordism $X$ of
$Y_1$ and $Y_2$ such that $(X,Y_1,Y_2)$ is an integral homology
$(S^3\times[0,1], S^3\times 0, S^3\times 1)$ and $X$ contains a
smoothly embedded annulus $C$ cobounding $K_1$ and~$K_2$.  The set of
smooth concordance classes of knots in integral homology $3$--spheres
form a group $\CZ^\smooth$ under connected sum.

We will now see that we can also think similarly of the concordance
group of framed knots in integral homology $3$--spheres.  Note that
a concordance $(X,C)$ as above determines a 1--1 correspondence
between framings on $K_1$ and~$K_2$.  We say two framed knots in
integral homology spheres are \emph{smoothly concordant} if there is
a concordance $(X,C)$ via which the given framings correspond to
each other.  Smooth concordance class of framed knots form a group
as well, and it is easily seen that this framed analogue of
$\CZ^{\smooth}$ is isomorphic to $\Z\oplus\CZ^\smooth$.  Similarly,
if we allow topologically locally flat annuli in topological
cobordisms we obtain a group $\CZ^{\top}$ and its framed analogue
$\Z\oplus\CZ^{\top}$.

As usual we adopt the convention that the group $\CZ$ can mean
either $\CZ^{\smooth}$ or $\CZ^{\top}$.  It follows easily from the
definitions that we have an isomorphism $\Z\oplus \CZ\to \HH_{0,2}$.
It follows from the work of Levine \cite{Le69a,Le69b} that $\CZ$
maps onto the algebraic knot concordance group which is isomorphic
to $(\Z/2)^{\infty}\oplus (\Z/4)^{\infty}\oplus \Z^\infty$, and
furthermore the epimorphism from $\CZ$ to $(\Z/2)^\infty\oplus
\Z^{\infty}$ splits. This discussion in particular proves the
following special case of Theorem \ref{mainthm2}:

\begin{theorem}\label{mainthm2-special}
  There exists a split surjection of $\HH_{0,2}$ onto $(\Z/2)^{\infty}
  \oplus \Z^\infty$.
\end{theorem}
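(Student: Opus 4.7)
The plan is to combine two ingredients laid out in the paragraph preceding the theorem: the identification $\HH_{0,2}\cong \Z\oplus \CZ$, coming from the dictionary between homology cylinders over the annulus and framed knots in integral homology $3$--spheres, together with Levine's classification of the algebraic knot concordance group.

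First I would verify the isomorphism $\Z\oplus\CZ\to\HH_{0,2}$ carefully. Given a framed oriented knot $K\subset Y$ in an integral homology sphere, removing an open tubular neighbourhood of $K$ and letting $i_+$ send $S^1\times 0$ to the meridian, with $i_-$ determined by the framing and orientation, produces a homology cylinder over $\Sigma_{0,2}=S^1\times[0,1]$. Conversely a homology cylinder recovers a framed knot via a $2$--handle followed by a $3$--handle attachment. I would then check that this assignment descends to homology cobordism classes on one side and to concordance classes of framed knots on the other, and that it is a group isomorphism with the $\Z$ factor recording the framing via $\lk(K,i_-(*\times[0,1])\cup i_+(*\times[0,1]))$.

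Second, I would invoke Levine \cite{Le69a,Le69b}: the algebraic knot concordance group $G$ is isomorphic to $(\Z/2)^\infty\oplus(\Z/4)^\infty\oplus\Z^\infty$, and the projection from the classical knot concordance group $C(S^3)$ onto the summand $(\Z/2)^\infty\oplus\Z^\infty$ admits a right inverse realised by explicit knots in $S^3$. To upgrade from $C(S^3)$ to $\CZ$ I would extend the algebraic invariant: any knot $K\subset Y$ in a homology sphere bounds a Seifert surface since $H_1(Y\setminus K)\cong\Z$, and the standard surgery argument shows that concordant knots in homology spheres yield algebraically concordant Seifert forms. The inclusion $C(S^3)\to\CZ$ is a group homomorphism compatible with these algebraic invariants, so composing Levine's splitting with the chain $C(S^3)\to\CZ\to\Z\oplus\CZ\cong\HH_{0,2}$ produces the required right inverse.

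The main technical point is to verify that Levine's algebraic concordance machinery, and in particular his splitting, passes from knots in $S^3$ to knots in arbitrary integral homology spheres; but since the entire algebraic concordance invariant depends only on the Seifert form, which is defined and behaves identically in $S^3$ and in a homology sphere, this compatibility is routine. Once this has been verified, the theorem follows by direct assembly, with the $\Z$ framing factor providing (if desired) an additional $\Z$ summand that can be absorbed into the $\Z^\infty$.
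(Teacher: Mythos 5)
Your proof is correct and follows essentially the same route as the paper: identify $\HH_{0,2}\cong\Z\oplus\CZ$ via the framed-knot dictionary, then push through Levine's splitting of the algebraic concordance group. The paper's discussion simply asserts that Levine's $(\Z/2)^\infty\oplus\Z^\infty$ splitting applies to $\CZ$ (knots in homology spheres), whereas you spell out the minor extra step that the Seifert-form invariants, and hence the splitting realized by knots in $S^3$, are compatible with the inclusion $C(S^3)\hookrightarrow\CZ$; this is a harmless elaboration of the same argument.
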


Note that the subgroup $\MM_{0,2}$ in $\HH_{0,2}\cong \Z\oplus \CZ$
is exactly the $\Z$ factor, so that $\HH_{0,2}/\MM_{0,2} \cong \CZ$.
In particular, Theorem~\ref{mainthm2-special} holds for
$\HH_{0,2}/\MM_{0,2}$ as well as~$\HH_{0,2}$.

\subsection{Proof of Theorem \ref{thm:topsmooth}}

Before we turn to the proof of Theorem \ref{thm:topsmooth} we
introduce a gluing operation on homology cylinders.  Let $M$ be a
homology cylinder over a surface $\S=\S_{g,n}$ and let $M'$ be a
homology cylinder over a surface $\S'=\S'_{g',n'}$.  Assume that
$n,n'>0$ and fix a boundary component $c$ of $\S$ and fix a boundary
component $c'$ of $\S'$.  We can now glue $\S$ and $\S'$ along $c$ and
$c'$ using an orientation reversing homeomorphism.  Similarly we can
glue $M$ and $M'$ along neighborhoods of $c\subset \partial M$ and
$c'\subset \partial M'$.  This gives a homology cylinder over
$\S\cup_{c=c'} \S'$, which we denote by $M\cup_{c,c'}
M'$.  We refer to
$M\cup_{c,c'} M'$ as the \emph{union of $M$ and $M'$ along $c$ and
  $c'$}.

Now let $M'$ be the product homology cylinder over $\S'$.  The
association $M\mapsto M\cup_{c,c'}M'$ gives rise to a monoid
homomorphism $\cgn \to \CC_{g+g',n+n'-2}$.  We refer to it as an
\emph{expansion by $\S'$ along $c$}.  Note that the expansion map
descends to a group homomorphism $\hgn \to \HH_{g+g',n+n'-2}$.

We are now in a position to prove Theorem \ref{thm:topsmooth}.

\begin{proof}[Proof of Theorem \ref{thm:topsmooth}]
We adopt the convention that $\Theta_3$ stands either for
$\Theta_3^{\smooth}$ or $\Theta_3^{\top}$. Recall that in Section
\ref{section:examples} we saw that we can identify $\HH_{0,n},
n=0,1,$ with $\Theta_3$. Also recall that by   Section
\ref{section:examples} the group  $\Theta_{3}^\smooth$ has infinite
rank and that $\Theta_{3}^{\top}$ is the trivial group.

As we saw above,  an expansion by a surface of genus $g$  with $n+1$
punctures gives rise to a homomorphism $\EE\colon \HH_{0,1}\to
\HH_{g,n}$. We also consider the composition $\mathcal{F}\colon
\HH_{g,n}\to \HH_{g,0}$ of $n$ expansions by a disk along all
boundary components of $\S_{g,n}$. Loosely speaking, $\mathcal{F}$
is the homomorphism given by filling in the $n$ holes.

\begin{claim}
There exists a map $\GG:\HH_{g,0}\to \HH_{0,0}$ such that the
composition
\[ \Theta_3=\HH_{0,1} \xrightarrow{\EE} \HH_{g,n}\xrightarrow{\FF}
\HH_{g,0}\xrightarrow{\GG}  \HH_{0,0}=\Theta_3\]
is the identity.
\end{claim}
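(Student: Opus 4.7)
The plan is to construct $\GG$ by attaching handlebodies to both boundary components of a homology cylinder so that, for cylinders with identity monodromy on $H_1$, the result realizes a Heegaard splitting of an integral homology $3$-sphere. First, I will fix a genus-$g$ Heegaard splitting $V_+\cup V_-=S^3$ and identifications $\partial V_\pm\cong\Sigma_{g,0}$ so that the separating curve $c\subset\Sigma_{g,0}$ bounding the disk $D^2$ in the decomposition $\Sigma_{g,0}=D^2\cup_c\Sigma_{g,1}$ (the one used in the expansion $\EE$) also bounds compressing disks $D_\pm\subset V_\pm$. This yields decompositions $V_\pm=B_\pm\cup_{D_\pm} V_\pm^{\natural}$, where $B_\pm$ is a $3$-ball bounded by $D^2\cup D_\pm$ and $V_\pm^{\natural}$ is a genus-$g$ handlebody bounded by $\Sigma_{g,1}\cup D_\pm$. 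For $(N,i_+,i_-)\in\HH_{g,0}$, I will form the closed $3$-manifold $\widetilde N := V_+\cup_{i_+} N\cup_{i_-} V_-$ and define $\GG(N)$ to be the homology cobordism class of $\widetilde N$ whenever $\widetilde N$ is an integral homology sphere, and $[S^3]$ otherwise.

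For well-definedness, Mayer--Vietoris applied to $\widetilde N=(V_+\sqcup V_-)\cup N$ with intersection $i_+(\Sigma_{g,0})\sqcup i_-(\Sigma_{g,0})$ shows that $\widetilde N$ has the homology of $V_+\cup V_-=S^3$ precisely when the automorphism $\phi := (i_+)_*^{-1}(i_-)_*$ of $H_1(\Sigma_{g,0})$ is the identity; since a homology cobordism of homology cylinders induces compatible isomorphisms on $H_1$ of the two ends, $\phi$ is a homology cobordism invariant, and hence $\GG\colon\HH_{g,0}\to\HH_{0,0}=\Theta_3$ is a well-defined map.

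To verify $\GG\circ\FF\circ\EE=\id$, let $Y\in\Theta_3$ be represented by $M\in\HH_{0,1}$, so $\partial M=i_+^M(D^2)\cup i_-^M(D^2)=S^2$ and $Y = M\cup B^3$. Unwinding the definitions, $\FF(\EE(M))=M\cup_c(\Sigma_{g,1}\times I)$. Since the two inclusions $\Sigma_{g,1}\hookrightarrow\Sigma_{g,1}\times I$ at levels $0$ and $1$ are isotopic (and the disk part of $\Sigma_{g,0}$ contributes trivially to $H_1$), both $i_+$ and $i_-$ induce the same map on $H_1$, so $\phi=\id$ and $\GG(\FF(\EE(M)))$ is the class of $\widetilde{\FF(\EE(M))}$.

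The hardest part is the topological identification of $\widetilde{\FF(\EE(M))}$ with $Y$. The key observation is that $V_+^{\natural}\cup_{\Sigma_{g,1}} V_-^{\natural}$ is a $3$-ball: the $2$-sphere $D_+\cup_c D_-$ inside $S^3=V_+\cup V_-$ bounds the obvious ball $B_+\cup_{D^2} B_-$ on one side, so by Alexander's theorem its complement, which equals $V_+^{\natural}\cup_{\Sigma_{g,1}} V_-^{\natural}$, is also a $3$-ball. Inserting the collar $\Sigma_{g,1}\times I$ between the $\Sigma_{g,1}$ faces of $V_\pm^{\natural}$ does not change the homeomorphism type, and re-attaching the balls $B_\pm$ along the disks $D_\pm$ produces a $3$-ball $P$ whose boundary exactly fills $\partial M=S^2$. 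Therefore $\widetilde{\FF(\EE(M))}=P\cup M=B^3\cup M=Y$. The main technical obstacle is the handlebody bookkeeping in this last step, tracking how the pieces $B_\pm$, $V_\pm^{\natural}$, $\Sigma_{g,1}\times I$, and $M$ assemble, but it reduces to routine verification given the compatibility of decompositions set up in the first paragraph.
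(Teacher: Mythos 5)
Your proof is correct and takes a genuinely different route from the paper's. Both constructions cap off the two boundary surfaces of a homology cylinder with the two handlebodies of a fixed genus-$g$ Heegaard splitting of $S^3$. The difference lies in how one copes with cylinders whose induced map $\phi = (i_+)_*^{-1}(i_-)_*$ on $H_1(\Sigma_{g,0})$ is nontrivial: you simply declare $\GG(N) = [S^3]$ when the filled manifold fails to be a homology sphere, whereas the paper pre-composes the gluing of the second handlebody by $\psi(\varphi(M))^{-1}$, where $\psi$ is a (non-homomorphic) section $\Aut^*(H)\to \mathcal{M}_{g,0}$; this correction makes the filled manifold a $\Z$-homology sphere for \emph{every} input. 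Since neither $\GG$ is a monoid map and only $\GG\circ\FF\circ\EE = \id$ is needed, both work; your version avoids choosing $\psi$ at the cost of an ad hoc case split. You also spell out the $3$-ball identification $\widetilde{\FF(\EE(M))} = P\cup M = Y$ via Alexander's theorem, which the paper leaves as ``easy to verify.''

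Two small points. First, the Mayer--Vietoris criterion is not quite what you wrote: $\widetilde N$ is a homology sphere iff $L_+ + \phi(L_-) = H_1(\Sigma_{g,0})$, where $L_\pm = \ker\bigl(H_1(\Sigma_{g,0})\to H_1(V_\pm)\bigr)$ are the Heegaard Lagrangians, and many $\phi\ne\id$ satisfy this (e.g.\ any $\phi$ preserving $L_-$). This does not break your argument, since the correct criterion still depends only on $\phi$ (hence is a homology-cobordism invariant), and $\phi = \id$ certainly implies it. Second, well-definedness of $\GG$ also requires that if $N$ and $N'$ are homology cobordant via $W$ and both $\widetilde N$, $\widetilde N'$ are homology spheres, then $\widetilde N$ and $\widetilde N'$ are homology cobordant; this is shown by gluing $V_\pm\times I$ to $W$ along bicollars of $\Sigma_\pm\subset\partial W$ and checking the resulting $\widetilde W$ with Mayer--Vietoris. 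You assert well-definedness without this step; it is standard (and the paper is equally terse about it), but it is the substantive part of the claim rather than a routine observation.
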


Let $\Lambda=\S_{g,0}$. Let $Y$ be a fixed handlebody of genus $g$.
Given $i=1,2$ we write $Y_i=Y$. Since $S^3$ has a Heegaard
decomposition of genus $g$ there  exist diffeomorphisms
$\varsigma_i:\Lambda\to \partial Y_i$ such that $Y_1
\cup_{\varsigma_1\circ \varsigma_2^{-1}} Y_2=S^3$.

We write $H=H_1(\L)$. We will see in Section \ref{section:actionh1}
that the action of a homology cylinder $M$ over $\L$ on $H_1(\L)$
gives rise to a homomorphism $\varphi:\mgzero\to \aut(H)$. We will
write $\aut^*(H)=\varphi(\mgzero)$. We now pick a splitting map
$\psi:\aut^*(H)\to \mgzero$, i.e. a map such that $\varphi\circ
\psi$ is the identity on $\aut^*(H)$. We pick $\psi$ such that
$\psi(\id)=\id$.

 Note that we can not arrange
that $\psi$ is a homomorphism. Now consider the following map
\[ \ba{rcl} \GG:\CC_{g,0} &\mapsto &\{ \Z\mbox{-homology spheres}\} \\
(M,i_+,i_-)&\mapsto & Y_1 \cup_{\varsigma_1 \circ i_+^{-1}} M
\cup_{i_- \circ \psi(\varphi(M))^{-1}\circ \varsigma_2^{-1}} Y_2.\ea
\] Note that this map is indeed well-defined, i.e. the right hand
side is an integral homology 3--sphere. Also note that this map
descends to a map $\HH_{g,0}\to \Theta_3$. It is easy to verify that
\[
\Theta_3=\HH_{0,1} \xrightarrow{\EE}  \HH_{g,n}\xrightarrow{\FF}
\HH_{g,0}\xrightarrow{\GG}  \HH_{0,0}=\Theta_3
\]
is indeed the identity map. This concludes the proof of the claim.

Before we continue we point out that the map $\GG$ is in general
\emph{not} a monoid morphism.
  We now obtain the following commutative
  diagram:
  \[ \xymatrix{ \Theta_3^{\smooth} \ar[r]^{=}\ar[d]
  &\HH_{0,1}^\smooth\ar[r]^{\EE}\ar[d]&
    \HH_{g,n}^\smooth\ar[d]\ar[r]^-{\FF}
    &\HH_{g,0}^\smooth\ar[d]\ar[r]^-{\GG}
    &\HH_{0,0}^\smooth\ar[d]\ar[r]^{=}& \Theta^{\smooth}_3\ar[d]\\
\Theta_3^{\top} \ar[r]^{=} &    \HH_{0,1}^{\top}\ar[r]^{\EE}&
\HH_{g,n}^{\top}\ar[r]^{\FF}& \HH_{g,0}^{\top}\ar[r]^{\GG}&
    \HH_{0,0}^{\top}\ar[r]^{=}& \Theta^{\top}_3.}
  \]
Since $\GG \circ \FF\circ \EE=\id$ we deduce that
$\Theta_3^{\smooth}\xrightarrow{\EE} \hgn^{\smooth}$ is injective.
Furthermore it follows from  $\Theta_3^{\top}=0$ and from the above
diagram that $\EE(\Theta_3^{\smooth})$ is contained in  the kernel
of the projection map $\HH_{g,n}^\smooth\to \hgn^{\top}$. Since
$\Theta_3^{\smooth}$ has infinite rank this concludes the proof of
the first part of Theorem \ref{thm:topsmooth}.

If $g=0$, then we can take $\GG$ to be the identity map. In
particular all maps are homomorphisms, and the homomorphism
$\FF:\HH_{0,n}^{\smooth}\to \Theta_3^{smooth}=:A$ has the desired
properties.
\end{proof}


\section{Torsion invariants of homology cylinders}
\label{section:torsion-invariant}

\subsection{The  torsion invariant}

Let $(M,N)$ be a pair of manifolds. Let $\varphi\colon \pi_1(M)\to H$
be a homomorphism to a free abelian group.  We denote the quotient
field of $\Z[H]$ by $Q(H)$. Denote by $p\colon \ti{M}\to M$ the
universal covering of $M$ and write $\ti{N}:=p^{-1}(N)$. Then we can
consider the chain complex
\[
C_*(M,N;Q(H))=C_*(\ti{M},\ti{N};\Z)\otimes_{\Z[\pi_1(M)]}Q(H).
\]
Let $\BB_*$ be a basis for the graded vector space $H_*(M,N;Q(H))$,
then we obtain the corresponding torsion
\[
\tau_{\BB_*}(M,N;Q(H))\in Q(H)^\times := Q(H)-\{0\}.
\]
This torsion invariant is well-defined up to multiplication by an
element of the form $\pm h$ ($h\in H)$. If $(M,N)$ is $Q(H)$--acyclic,
then $\BB_*$ is the trivial basis and we just write $\tau(M,N;Q(H))$.
We will not recall the definition of torsion but refer instead to the
many excellent expositions, e.g., \cite{Mi66}, \cite{Tu01}, \cite{Tu02}
and \cite{Nic03}.

We will several times make use of the following well-known lemma
(e.g., see \cite[Proposition~2.3]{KLW01} for a proof).

\begin{lemma}\label{lem:alsozero}
  If $H_*(M,N)=0$ and if $\pi_1(M)\to H$ is a homomorphism to a free
  abelian group, then $H_*(M,N;Q(H))=0$.
\end{lemma}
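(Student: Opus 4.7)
The plan is to reduce the lemma to a rank comparison of boundary matrices via the augmentation $\epsilon\colon \Z[H]\to \Z$ sending every $h\in H$ to $1$. Because $Q(H)$ is a localization of the integral domain $\Z[H]$, it is $\Z[H]$-flat, so $H_*(M,N;Q(H))=H_*(C_*(M,N;\Z[H]))\otimes_{\Z[H]}Q(H)$, and it suffices to show that each homology module $H_i(C_*(M,N;\Z[H]))$ is $\Z[H]$-torsion. Since $M$ and $N$ are compact I would fix a finite relative CW structure and regard $C_*:=C_*(M,N;\Z[H])$ as a bounded chain complex of finitely generated free $\Z[H]$-modules, with ranks $r_i=\rk_{\Z[H]}C_i$ and boundary matrices~$\partial_i$.

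The first step is to translate the hypothesis $H_*(M,N)=0$ into rank data. The $\Z[H]$-module $\Z$ determined by $\epsilon$ has trivial $H$-action, so pulling back along $\pi_1(M)\to H$ gives the trivial $\pi_1(M)$-action on~$\Z$; thus $C_*\otimes_{\Z[H]}\Q\cong C_*(M,N;\Q)$, and this complex is acyclic by the universal coefficient theorem and the vanishing of $H_*(M,N)$. Acyclicity of a bounded complex of finite-dimensional $\Q$-vector spaces amounts to the rank identities
\[
\rk_\Q(\epsilon\partial_i)+\rk_\Q(\epsilon\partial_{i+1})=r_i
\]
for every~$i$. The second step is the matrix inequality $\rk_{Q(H)}(A)\ge\rk_\Q(\epsilon A)$ for every matrix $A$ over $\Z[H]$: any nonzero $k\times k$ minor of the integer matrix $\epsilon A$ is $\epsilon$ applied to the corresponding minor of $A$, which is therefore nonzero in $\Z[H]\subset Q(H)$, exhibiting a nonzero $k\times k$ minor of $A$ over $Q(H)$.

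Combining these with $\partial_i\partial_{i+1}=0$, which forces $\rk_{Q(H)}(\partial_i)+\rk_{Q(H)}(\partial_{i+1})\le r_i$, I would conclude
\[
r_i=\rk_\Q(\epsilon\partial_i)+\rk_\Q(\epsilon\partial_{i+1})\le\rk_{Q(H)}(\partial_i)+\rk_{Q(H)}(\partial_{i+1})\le r_i,
\]
so equality holds throughout, which is exactly the statement that $C_*\otimes_{\Z[H]}Q(H)$ is exact in every degree. The step I expect to require the most care is the bookkeeping around the chain complex: choosing a free basis of $C_*$ coming from lifts of cells so that one can speak of actual minors of $\partial_i$ and verify that these minors map entrywise to minors of $\epsilon\partial_i$ under $\epsilon$. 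Once this bookkeeping is in place, the rank squeeze above closes the argument.
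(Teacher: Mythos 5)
Your proof is correct and complete. The paper does not actually supply its own proof of this lemma; it cites \cite[Proposition~2.3]{KLW01}. Your argument---pass to a finite relative CW structure, note that the augmentation $\epsilon$ carries $C_*(M,N;\Z[H])$ to the ordinary rational chain complex which is acyclic by hypothesis and universal coefficients, and then squeeze ranks using $\rk_{Q(H)}\partial_i \geq \rk_{\Q}(\epsilon\partial_i)$ (minors commute with the ring map $\epsilon$, and $\Z[H]\hookrightarrow Q(H)$) together with the subadditivity $\rk\partial_i+\rk\partial_{i+1}\leq r_i$ forced by $\partial_i\partial_{i+1}=0$---is the standard way to prove such a ``specialization forces generic acyclicity'' statement, and I would expect it matches the spirit of the cited reference. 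One small remark: you open by saying it suffices to show each $H_i(C_*(M,N;\Z[H]))$ is $\Z[H]$-torsion, but your rank squeeze proves directly that $C_*\otimes_{\Z[H]}Q(H)$ is exact; the torsion reduction is a harmless detour you never actually invoke. Also, strictly speaking you should note that compactness of $M$ (a standing convention in the paper) is what guarantees the existence of a finite CW structure and hence the finitely generated free $\Z[H]$-chain modules on which the whole rank bookkeeping depends.
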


We also adopt the following notation: given $p,q\in Q(H)$ we write
$p\doteq q$ if $p=\eps h \cdot q$ for some $\eps\in \{-1,1\}$ and
$h\in H$. Put differently, $p\doteq q$ if and only if $p$ and $q$
agree up to multiplication by a unit in $\zh$.

\subsection{Torsion of homology cylinders}

Let $(M,i_+,i_-)$ be a homology cylinder over~$\Sigma$.
Write $H=H_1(\Sigma)$. We normally think of $H$ as a multiplicative group.
 Denote
$\Sigma_\pm = i_\pm(\Sigma) \subset M$.  Consider
\[
\varphi\colon \pi_1(M)\to H_1(M)\xleftarrow{\cong}
H_1(\S_+)\xleftarrow{i_+} H=H_1(\Sigma).
\]
Since $H_*(M,\S_+)=0$ it follows from Lemma \ref{lem:alsozero} that
$H_*(M,\S_+;Q(H))=0$.  Therefore we can define
\[
\tau(M):=\tau(M,\S_+;Q(H)) \in Q(H)^\times.
\]
This is referred to as the \emph{torsion} of the homology cylinder
$M=(M,i_+,i_-)$.

We will now show that the torsion can be defined in terms of homology.
First note that $\Z[H]$ is a unique factorization domain, so that for any
finitely generated $\Z[H]$-module $M$, the \emph{order} of $M$ is
defined as an element of $\Z[H]$.  We denote it by $\ord M$.  (For a
precise definition, e.g., see \cite[Section 4.2]{Tu01}.)  Note that
$\ord M$ is well-defined up to multiplication by a unit in~$\zh$.

\begin{lemma}\label{lemma:homology-and-torsion}
  For any homology cylinder $M$ we have $\tau(M) \doteq \ord
  H_1(M,\Sigma_+;\Z[H])$.
\end{lemma}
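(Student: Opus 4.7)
The plan is to exhibit both $\tau(M)$ and $\ord H_1(M, \Sigma_+; \Z[H])$ as the determinant of a single square presentation matrix. First I would choose a relative handle decomposition of $M$, viewed as a cobordism from $\Sigma_+$ to $\Sigma_-$, that uses only $1$- and $2$-handles; this is possible by standard handle cancellation in dimension~$3$, since $M$ is a connected cobordism with both $\Sigma_+$ and $\Sigma_-$ nonempty and connected. The resulting relative cellular chain complex
$$
C_*(M, \Sigma_+; \Z[H]) \;=\; \bigl(\, 0 \to C_2 \xrightarrow{\ d\ } C_1 \to 0 \,\bigr)
$$
consists of finitely generated free $\Z[H]$-modules concentrated in degrees $1$ and $2$.

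\medskip

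By Lemma~\ref{lem:alsozero} this complex becomes acyclic after tensoring with $Q(H)$, so $d$ is an isomorphism over $Q(H)$. In particular $H_2(M, \Sigma_+; \Z[H]) = \ker d$ is $\Z[H]$-torsion; but $\ker d \subset C_2$ is also torsion-free as a submodule of a free module over the integral domain $\Z[H]$, so it must vanish. Consequently $C_1$ and $C_2$ have a common rank $n$, and in any chosen bases $d$ is represented by an $n \times n$ matrix $D$ over $\Z[H]$ that is invertible over $Q(H)$. Then $H_1(M, \Sigma_+; \Z[H]) = \mathrm{coker}(d)$ is presented by $D$, giving $\ord H_1 \doteq \det D$ by the definition of the order of a torsion module with a square presentation. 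At the same time, the Reidemeister torsion of the $Q(H)$-acyclic two-term complex $0 \to C_2 \xrightarrow{d} C_1 \to 0$ equipped with these bases is likewise $\det D$ up to units in $\Z[H]$, by the standard formula relating torsion to the determinant of the boundary map (see, e.g., \cite{Tu01}). Combining the two identities gives $\tau(M) \doteq \det D \doteq \ord H_1(M, \Sigma_+; \Z[H])$.

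\medskip

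The step I expect to require the most care is bookkeeping the indeterminacies on each side: $\tau(M)$ is defined only up to multiplication by $\pm h$ with $h \in H$, $\ord H_1$ is defined only up to units in $\Z[H]$, and $\det D$ depends on the choice of bases of $C_1$ and $C_2$ and on the lifts of cells to the universal cover. One must verify that all of these ambiguities are compatible and are absorbed into the equivalence relation~$\doteq$. Beyond this, the argument is routine once the two-term form of the chain complex has been secured.
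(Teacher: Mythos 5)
Your proof is correct but takes a genuinely different route from the paper's. The paper invokes the general Milnor--Turaev identity $\tau(M) \doteq \prod_i (\ord H_i(M,\Sigma_+;\Z[H]))^{(-1)^{i+1}}$ and then argues that $H_0$ vanishes directly and that $H_i=0$ for $i>1$ using Poincar\'e duality, the universal coefficient spectral sequence, and the fact that $H_1(M,\Sigma_-;\Z[H])$ is torsion. You instead build a handle decomposition of $(M,\Sigma_+)$ with only $1$-- and $2$--handles, so that the chain complex is a two-term complex $0 \to C_2 \xrightarrow{d} C_1 \to 0$ of free modules, and then identify both $\tau(M)$ and $\ord H_1$ with $\det d$. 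Your observation that $\ker d$ is simultaneously torsion (by $Q(H)$-acyclicity and flatness) and torsion-free (as a submodule of a free module over an integral domain), hence zero, replaces the paper's spectral-sequence argument by an elementary one; this also directly gives $H_i=0$ for $i\ne 1$ since the complex is concentrated in degrees $1$ and $2$. Your approach is more concrete, self-contained, and is in fact the same viewpoint the paper itself adopts later in Section~\ref{section:construction-computation} for explicit torsion computations; what the paper's abstract route buys is that it sidesteps any need to produce and manage the handle decomposition. One small logical slip: the equality of ranks of $C_1$ and $C_2$ is not a ``consequence'' of $\ker d=0$ (injectivity alone does not force equal ranks); rather it follows because $d\otimes Q(H)$ is an isomorphism, which you had already established, so this is a matter of wording rather than a gap.
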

\begin{proof}
  It is well known that $\tau(M) = \prod_i (\ord
  H_i(M,\Sigma_+;\Z[H]))^{(-1)^{i+1}}$ (e.g., see \cite[Section
  4.2]{Tu01}). Since $H_1(\S_+)\to H_1(M)$ is an isomorphism it
  follows immediately that $H_0(M,\S_+;\Z[H])=0$. One can check that
  $H_i(M,\Sigma_+;\Z[H])=0$ for all $i>1$, by using Poincar\'e
  duality, universal coefficient spectral sequence, and the fact that
  $H_1(M,\Sigma_-;\Z[H])$ is torsion.
\end{proof}

\begin{remark} \label{remark:torsion}
  \leavevmode\Nopagebreak
  \begin{enumerate}
  \item The torsion of a homology cylinder was first studied by
    Sakasai \cite{Sa06} and is closely related to the torsion
    invariants of sutured manifold introduced independently by
    Benedetti and Petronio \cite{BP01} and \cite{FJR09}.
  \item Note that string links give rise to homology cylinders in a
    natural way. In this context the torsion invariant was first
    studied by Kirk, Livingston and Wang \cite[Definition~6.8]{KLW01}.
  \item By Lemma~\ref{lemma:homology-and-torsion} (see also
    \cite[Lemma~3.5]{FJR09}) the torsion $\tau(M)$ is in fact an
    element in $\Z[H]$, furthermore, if $\eps\colon \Z[H]\to \Z$
    denotes the augmentation defined by $\eps(h)=1$ for $h\in H$, then
    $\eps(\tau(M))=|H_1(M,\S_+)|=1$ (e.g., see \cite{Tu86} and
    \cite[Proposition~5]{FJR09}).
  \item By Lemma~\ref{lemma:homology-and-torsion}, if $M=\mi$ is the
    homology cylinder over an annulus corresponding to a knot
    $K\subset S^3$ as in Section~\ref{section:examples}, then
    $\tau(M)=\Delta_K$, the Alexander polynomial of $K$.
  \item If $\varphi\in \mgn$, then $H_*(M(\varphi),\S_+;\Z[H])=0$ and
    therefore $\tau(M(\varphi))\doteq 1\in \Z[H]$.
  \end{enumerate}
\end{remark}

\subsection{Action of homology cylinders on $H_1(\Sigma)$}
\label{section:actionh1}

Before we can state the behavior of torsion under the product
operation we have to
study the action of a homology cylinder on the first homology of the
surface.  Our notation is as follows.  Throughout this paper we
write $H=H_1(\Sigma)$.  Given a homology cylinder $(M,i_+,i_-)$ over
$\Sigma$ we denote the automorphism
\[
(i_+)_*^{-1}(i_-)_*\colon H=H_1(\S)\xrightarrow[(i_-)_*]{\cong}
H_1(M) \xrightarrow[(i_+)_*^{-1}]{\cong} H
\]
by $\varphi(M)=\varphi(M,i_+,i_-)$.  Let $H_\partial$ be the image of
$H_1(\partial \Sigma)$ in $H$, and let
\[
\Aut^*(H) = \{\varphi\in \Aut(H) \mid \text{$\varphi$ fixes
  $H_\partial$ and preserves the intersection form of $\Sigma$}\}.
\]

We recall the following result:
\def\tempstr{\cite[Proposition 2.3 and Remark 2.4]{GS08}}
\begin{proposition}
  [Goda-Sakasai \tempstr]
  \label{prop:inauthdot}
  Let $M$ be a homology cylinder over $\Sigma$.  Then $\varphi(M) \in
  \Aut^*(H)$.
\end{proposition}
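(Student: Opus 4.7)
The plan is to verify the two conditions in the definition of $\Aut^*(H)$ separately. The first---that $\varphi(M)$ fixes $H_\partial$ pointwise---is immediate from condition (3) of the homology cylinder definition: since $i_+|_{\partial\Sigma} = i_-|_{\partial\Sigma}$, the induced maps $(i_+)_*$ and $(i_-)_*$ from $H_1(\Sigma)$ to $H_1(M)$ agree on the image $H_\partial$ of $H_1(\partial\Sigma)$, so $\varphi(M) = (i_+)_*^{-1}(i_-)_*$ is the identity on $H_\partial$.

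For the intersection form, the approach I would take is to work on the closed oriented surface $\partial M = \Sigma_+ \cup \Sigma_-$ and invoke the classical \emph{half lives, half dies} principle: the kernel $K$ of the inclusion-induced map $H_1(\partial M;\Q) \to H_1(M;\Q)$ is a Lagrangian subspace of $H_1(\partial M;\Q)$ with respect to the intersection form on the closed surface $\partial M$. Given $\alpha, \beta \in H$, I would represent them by transversely intersecting closed curves $a, b$ lying in the interior of $\Sigma$, and choose curves $a', b'$ in the interior of $\Sigma$ representing $\varphi(M)^{-1}\alpha$ and $\varphi(M)^{-1}\beta$. Consider the $1$-cycles $u = i_+(a) - i_-(a')$ and $v = i_+(b) - i_-(b')$ on $\partial M$. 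By the very definition of $\varphi(M)$, both $[u]$ and $[v]$ die in $H_1(M)$, so $u, v \in K$, and therefore $u \cdot_{\partial M} v = 0$ by the Lagrangian property.

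Expanding $u \cdot_{\partial M} v = 0$, the two cross terms $i_+(a) \cdot_{\partial M} i_-(b')$ and $i_-(a') \cdot_{\partial M} i_+(b)$ vanish geometrically because the relevant curves sit in the disjoint interiors of $\Sigma_+$ and $\Sigma_-$. The remaining identity, together with the fact that $i_+$ preserves while $i_-$ reverses orientation relative to the boundary orientation of $\partial M$, translates into $\alpha \cdot_\Sigma \beta = \varphi(M)^{-1}\alpha \cdot_\Sigma \varphi(M)^{-1}\beta$, which is equivalent to the $\varphi(M)$-invariance of the intersection form. The main obstacle is really just bookkeeping: matching the orientation conventions on $\Sigma$, $\Sigma_\pm$, and the induced boundary orientation of $\partial M$; the only nonelementary input is the Lagrangian property of $K$, which is a standard fact in $3$-manifold topology.
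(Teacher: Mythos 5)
Your proof is correct. Note first that the paper does not actually supply its own argument for this proposition---it simply cites Goda--Sakasai \cite{GS08}---so there is no in-text proof to compare against; but your argument is the standard one and is, to my knowledge, essentially the same as the cited source.

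Both halves of your argument check out. That $\varphi(M)$ fixes $H_\partial$ is indeed an immediate consequence of condition (3), $i_+|_{\partial\Sigma}=i_-|_{\partial\Sigma}$: a class in $H_\partial$ is carried by a cycle in $\partial\Sigma$, on which $(i_+)_*$ and $(i_-)_*$ literally agree, so $(i_+)_*^{-1}(i_-)_*$ fixes it. For the intersection form, your cycles $u=i_+(a)-i_-(a')$ and $v=i_+(b)-i_-(b')$ do die in $H_1(M)$ precisely because $a'$ represents $\varphi(M)^{-1}\alpha=(i_-)_*^{-1}(i_+)_*\alpha$ (and likewise for $b'$), so $u,v$ lie in $K=\Ker\bigl(H_1(\partial M;\Q)\to H_1(M;\Q)\bigr)$. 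The isotropy of $K$ then gives $u\cdot_{\partial M}v=0$; the cross terms vanish since the curves live in the disjoint open sets $\inte\Sigma_+$ and $\inte\Sigma_-$; and the signs work out exactly as you say, since $i_+$ is orientation-preserving and $i_-$ orientation-reversing relative to the boundary orientation of $\partial M$, yielding $\alpha\cdot_\Sigma\beta=\varphi(M)^{-1}\alpha\cdot_\Sigma\varphi(M)^{-1}\beta$, which is equivalent to $\varphi(M)$-invariance. One small remark: you only need the isotropy of $K$ (the easy half of ``half lives, half dies''), not the full Lagrangian statement about its rank; this follows directly from the long exact sequence of $(M,\partial M)$ and Poincar\'e--Lefschetz duality even over $\Z$, so one could state the input slightly more economically, but this is a stylistic point and does not affect correctness.
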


Let $\widehat H=H/H_\partial = H_1(\widehat \Sigma)$ where
$\widehat\Sigma$ denotes $\Sigma$ with capped boundary circles.  We
write $H=H_\partial \times \widehat H$ by choosing a splitting.
Choosing an arbitrary basis of $H_\partial$ and a symplectic basis of
$\widehat H=H_1(\widehat\Sigma)$, $\Aut^*(H)$ consists of all matrices
$P$ of the form
\[
\begin{bmatrix}
  \id_{n-1} & * \\
  0 & P_0
\end{bmatrix}
\]
with $P_0\in \mbox{Sp}(2g,\Z)$ \cite[Remark~2.4]{GS08}. Note that any
$P\in \Aut^*(H)$ is indeed realized by a homology cylinder, in fact
there exists $\varphi\in \mgn$ such that the induced action on
$H_1(\sgn)$ is given by $P$ (e.g., see \cite[Proposition~7.3]{FM09}).

\subsection{Product formulas for torsion}

Each $\varphi\in \aut(H)$ induces an automorphism of $\zh$, which we
also denote by $\varphi$.  In particular, a homology cylinder $M$ over
$\Sigma$ gives rise to $\varphi(M)\in \Aut(\zh)$.  We can now
formulate the following proposition (e.g., see \cite[Section~7]{Mi66}
and \cite[Proposition~6.6]{Sa08} for related results).

\begin{proposition}\label{prop:deltasum}
  Let $M=(M,i_+,i_-)$ and $N=(N,j_+,j_-)$ be homology cylinders
  over~$\Sigma$. Then
  \[
  \tau(M\cdot N)\doteq \tau(M) \cdot \varphi(M)(\tau(N)).
  \]
\end{proposition}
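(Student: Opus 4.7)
The natural approach is to use the standard multiplicativity of Reidemeister torsion for a triple. Consider the triple $(M\cdot N,\, M,\, \Sigma_+)$, where $\Sigma_+=i_+(\Sigma)$ and $M$ is viewed as a submanifold of $M\cdot N$ in the obvious way. All three pairs in this triple have vanishing integral relative homology (the first and third because $M\cdot N$ and $M$ are themselves homology cylinders; the second by excision and the fact that $N$ is a homology cylinder), so by Lemma~\ref{lem:alsozero} they are $Q(H)$-acyclic, and the multiplicativity of torsion gives
\[
\tau(M\cdot N,\,\Sigma_+;\,Q(H)) \doteq \tau(M\cdot N,\, M;\, Q(H)) \cdot \tau(M,\,\Sigma_+;\,Q(H)).
\]
The plan is to identify the second factor on the right with $\tau(M)$ and the first factor with $\varphi(M)(\tau(N))$.

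For the second factor, the key point is bookkeeping. The homomorphism $\pi_1(M\cdot N)\to H$ used in defining $\tau(M\cdot N)$ factors through the isomorphism $\alpha\colon H\xrightarrow{i_+}H_1(M)\to H_1(M\cdot N)$. Its restriction to $\pi_1(M)$ therefore equals the composite $\pi_1(M)\to H_1(M)\xrightarrow{(i_+)_*^{-1}}H$, which is exactly the homomorphism used to define $\tau(M)$. Hence $\tau(M,\Sigma_+;Q(H))\doteq\tau(M)$.

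For the first factor, apply excision to get $H_*(M\cdot N,M;Q(H))\cong H_*(N,j_+(\Sigma);Q(H))$, and hence the same identification of torsions. Here the homomorphism $\pi_1(N)\to H$ inherited from $M\cdot N$ is $\alpha^{-1}\circ(\text{incl}_N)_*$. A direct Mayer--Vietoris computation (using $i_-(x)=j_+(x)$ in $M\cdot N$ together with $(i_-)_*=(i_+)_*\circ\varphi(M)$ in $H_1(M)$) shows that on $H_1(N)$ this restricted homomorphism equals $\varphi(M)\circ(j_+)_*^{-1}$, i.e.\ it differs from the natural homomorphism used for $\tau(N)$ by post-composition with $\varphi(M)\in\Aut^*(H)$. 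Since replacing the coefficient system by the pullback along a ring automorphism $\varphi\colon Q(H)\to Q(H)$ simply applies $\varphi$ to every matrix entry (and hence to the resulting torsion), we conclude
\[
\tau(M\cdot N,\,M;\,Q(H)) \doteq \varphi(M)\bigl(\tau(N)\bigr).
\]
Combining these two identifications yields $\tau(M\cdot N)\doteq \tau(M)\cdot\varphi(M)(\tau(N))$, as claimed.

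The main obstacle is the third paragraph: carefully verifying that the coefficient homomorphism on $N\subset M\cdot N$ really is twisted by $\varphi(M)$ rather than by $\varphi(M)^{-1}$ or some other closely related automorphism, and then confirming that an $\Aut(H)$-twist of coefficients has precisely the stated effect on the torsion (a basis-change computation in the chain complex, modulo the ambiguity $\doteq$ coming from units $\pm h$). The first step (triple multiplicativity) and the identification of $\tau(M,\Sigma_+)$ with $\tau(M)$ are essentially formal once one has checked acyclicity.
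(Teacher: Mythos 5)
Your argument is correct and is essentially the paper's proof: both decompose $M\cdot N$ along the gluing surface, invoke Milnor's multiplicativity of torsion for a short exact sequence of $Q(H)$-chain complexes, and then carefully trace the coefficient homomorphism on the $N$-piece to produce the $\varphi(M)$-twist. The one small variation is that the paper uses the Mayer--Vietoris sequence $0\to C_*(\S_-)\to C_*(M,\S_+)\oplus C_*(N)\to C_*(M\cdot N,\S_+)\to 0$, in which two terms are not acyclic, so they must choose compatible bases $\BB_*,\CC_*,\DD_*$ and verify these cancel, whereas your triple $(M\cdot N,\,M,\,\Sigma_+)$ together with chain-level excision keeps all three complexes acyclic and sidesteps that bookkeeping entirely.
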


Note that $Q(H)^\times$ is a multiplicative abelian group. The
action of $\Aut^*(H)$ on $H$ extends to an action of $\Aut^*(H)$ on
$Q(H)^\times$, and we can thus form the semidirect product
$\Aut^*(H)\ltimes Q(H)^\times$. We can now formulate the following
corollary:

\begin{corollary}\label{cor:metabmap}
The following is a well-defined homomorphism of monoids
\[ \begin{array}{rcl} \cgn &\to & \Aut^*(H)\ltimes Q(H)^\times \\
M&\mapsto & (\varphi(M),\tau(M)).\end{array}\]
\end{corollary}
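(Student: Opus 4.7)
The plan is to verify three things: (a) the first coordinate $M\mapsto\varphi(M)$ is a monoid homomorphism landing in $\Aut^*(H)$, (b) the combined map transforms according to the semidirect-product rule $(\varphi_1,q_1)(\varphi_2,q_2)=(\varphi_1\varphi_2,q_1\cdot\varphi_1(q_2))$, and (c) the ``$\doteq$'' ambiguity in $\tau$ does not obstruct well-definedness. The core ingredients, Propositions~\ref{prop:inauthdot} and~\ref{prop:deltasum}, are already in hand, so the task is largely formal.

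For (a), I would start from $M\cdot N=(M\cup_{i_-\circ j_+^{-1}} N,\,i_+,\,j_-)$. A Mayer--Vietoris computation (combined with the homology cylinder condition that $i_\pm,j_\pm$ all induce isomorphisms on $H_1$) shows that the two inclusions $\iota_M\colon H_1(M)\to H_1(M\cup N)$ and $\iota_N\colon H_1(N)\to H_1(M\cup N)$ are isomorphisms, while the gluing forces the identity $\iota_M\circ(i_-)_*=\iota_N\circ(j_+)_*$ as maps $H_1(\Sigma)\to H_1(M\cup N)$. A short diagram chase then gives
\[
\varphi(M\cdot N)=(i_+)_*^{-1}(j_-)_*=\bigl[(i_+)_*^{-1}(i_-)_*\bigr]\bigl[(j_+)_*^{-1}(j_-)_*\bigr]=\varphi(M)\varphi(N),
\]
and Proposition~\ref{prop:inauthdot} ensures $\varphi(M)\in\Aut^*(H)$. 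For (b), Proposition~\ref{prop:deltasum} says $\tau(M\cdot N)\doteq\tau(M)\cdot\varphi(M)(\tau(N))$, which is precisely the second coordinate of the semidirect-product multiplication. The product homology cylinder over $\Sigma$ visibly has $\varphi=\id$ and, by Remark~\ref{remark:torsion}(5), $\tau\doteq 1$, so the unit element maps to the unit.

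Finally, for (c), the $\doteq$ ambiguity is multiplication by $\pm h$ with $h\in H$, and the subgroup $\{\pm h:h\in H\}\subset Q(H)^\times$ is preserved by every element of $\Aut^*(H)$ (which permutes $H$). Hence it gives a normal subgroup of $\Aut^*(H)\ltimes Q(H)^\times$, and all identities above hold modulo it, so the assignment is a bona fide monoid homomorphism into the corresponding quotient (which is tacitly what the statement means by $\Aut^*(H)\ltimes Q(H)^\times$). I do not expect any real obstacle, as this corollary is essentially a repackaging of Propositions~\ref{prop:inauthdot} and~\ref{prop:deltasum} together with the multiplicativity of the $H_1$-action.
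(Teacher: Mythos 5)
Your proposal is correct and follows the approach the paper clearly intends: the corollary is stated immediately after Proposition~\ref{prop:deltasum} with no separate proof, precisely because it is a formal consequence of Propositions~\ref{prop:inauthdot} and~\ref{prop:deltasum} once one checks multiplicativity of $\varphi$ and the unit. Your diagram chase for $\varphi(M\cdot N)=\varphi(M)\varphi(N)$ uses the same Mayer--Vietoris fact (that the inclusions of $M$ and $N$ into $M\cup N$ induce $H_1$-isomorphisms) that the paper invokes in the proof of Proposition~\ref{prop:deltasum}, and the identity $\iota_M\circ(i_-)_*=\iota_N\circ(j_+)_*$ coming from the gluing is exactly the right key step. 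Your point (c) about the $\doteq$ ambiguity is a genuine and worthwhile clarification: $\tau$ is only defined up to $\pm h$, so the target must be read modulo the $\Aut^*(H)$-stable normal subgroup $\{\pm h:h\in H\}$ of $Q(H)^\times$; the paper elides this, and it is only fully absorbed later in Corollary~\ref{cor:metabmap2} where one passes to $Q(H)^\times/N(H)$ (which contains $\pm H$). In short, you have supplied exactly the routine verification the paper omits, and nothing is missing.
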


Since we are mostly interested in abelian quotients of $\cgn$ we
will now show that $\tau$ also gives rise to a homomorphism to an
abelian group. We define $A(H)$ to be the subgroup of $Q(H)^\times$
generated by the following set
\[
\{\pm h \cdot p^{-1}\cdot \varphi(p) \mid h\in H\text{, } p\in
Q(H)^\times\text{, and }\varphi\in \Aut^*(H)\}.
\]
We write $\AH=A(H)$ when $H$ is clearly understood from the context.
The following is an immediate consequence of
Proposition~\ref{prop:deltasum}:

\begin{corollary}\label{corollary:monoid-homomorphism}
  The torsion invariant gives rise to a monoid homomorphism
  \[
  \tau\colon \cgn \to Q(H)^\times/\AH.
  \]
\end{corollary}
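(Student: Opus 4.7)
The plan is to verify the two conditions needed for the map $M\mapsto \tau(M)$ to descend to a well-defined monoid homomorphism $\cgn\to Q(H)^\times/\AH$: namely, that the class of $\tau(M)$ modulo $\AH$ is independent of the $\pm h$ indeterminacy inherent in the torsion, and that $\tau(M\cdot N) \equiv \tau(M)\tau(N) \pmod{\AH}$. The unit is then automatic: by Remark~\ref{remark:torsion}(5), the product homology cylinder has $\tau\doteq 1$, and $\pm h$ lies in $\AH$ (take $p=1$, $\varphi=\id$, or simply observe that the generating set contains such elements directly).

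For well-definedness, the indeterminacy $\pm h$ in $\tau(M)$ sits inside $\AH$ by the same observation, so the class in $Q(H)^\times/\AH$ does not depend on the choice of torsion representative. For multiplicativity, I would invoke Proposition~\ref{prop:deltasum} to write
\[
\tau(M\cdot N) \doteq \tau(M)\cdot \varphi(M)(\tau(N)),
\]
and then, using that $Q(H)^\times$ is abelian, compute
\[
\frac{\tau(M\cdot N)}{\tau(M)\,\tau(N)} \doteq \tau(N)^{-1}\cdot \varphi(M)(\tau(N)).
\]
By Proposition~\ref{prop:inauthdot}, $\varphi(M)\in \Aut^*(H)$, so with $p=\tau(N)$ the right-hand side is, up to $\pm h$, a generator of $\AH$. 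Hence the ratio lies in $\AH$, which is exactly what is needed.

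There is really no serious obstacle here: the group $\AH$ was manifestly defined so as to absorb both the $\pm h$ ambiguity of torsion and the noncommutative twist $\varphi(M)(\tau(N))$ appearing in Proposition~\ref{prop:deltasum}, so the corollary is a formal consequence. The only small point to be careful about is to note explicitly that $\varphi(M)\in \Aut^*(H)$ (guaranteed by Proposition~\ref{prop:inauthdot}), since the generating elements of $\AH$ are specified using automorphisms in $\Aut^*(H)$ and not arbitrary elements of $\Aut(H)$.
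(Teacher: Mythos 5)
Your proof is correct and is exactly the unwinding that the paper leaves implicit when it asserts the corollary is ``an immediate consequence of Proposition~\ref{prop:deltasum}'': you absorb the $\pm h$ ambiguity into $\AH$, rewrite the product formula as a ratio $\tau(N)^{-1}\varphi(M)(\tau(N))$ which is a generator of $\AH$ (using Proposition~\ref{prop:inauthdot} to ensure $\varphi(M)\in\Aut^*(H)$), and note the unit is sent to the identity coset. No discrepancy with the paper's intended argument.
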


\begin{proof}[Proof of Proposition~\ref{prop:deltasum}]
  We write $\S_\pm=i_\pm(\S)$ and $\L_\pm=j_\pm(\S)$.  Let
  $W=M\cup_{\S_-=\L_+} N$, where $i_-\circ(j_+)^{-1}$ is the gluing
  map.  We view $M$, $N$, $\Sigma_-$ (=$\Lambda_+$) as subspaces
  of~$W$.  A Mayer-Vietoris argument shows that the inclusion map
  induces an isomorphism $H_1(M)\to H_1(W)$.  Let $F=H_1(M)$.  We
  equip $W$ with the map $H_1(W) \cong H_1(M) = F$ induced by the
  inclusion.  Restricting this, we equip $\Sigma_-$ ($=\Lambda_+$),
  $M$, $N$ with maps into~$F$.  For notational convenience, we denote
  $Q=Q(F)$, the quotient field of~$\Z[F]$.  We have the following
  short exact sequence of cellular homology:
  \[
  0\to C_*(\S_-;Q)\to C_*(M,\S_+;Q)\oplus C_*(N;Q)\to C_*(W,\S_+;Q)\to
  0.
  \]
Here the map $C_*(\S_-;Q)\to C_*(N;Q)$ is given by $-(j_+)_*\circ
(i_-)^{-1}_*$ and the map  $C_*(\S_-;Q)\to C_*(M,\S_+;Q)$ is given
by the inclusion map. Let $\BB_*$ be a basis for the graded
$Q$--module  $H_*(\S_-;Q)$.  It follows from the long exact homology
sequence that the inclusion map induces an isomorphism
$H_*(\S_-;Q)\to H_*(N;Q)$.  We equip $H_*(N;Q)$ with the basis
$\CC_*$ given by the image of $\BB_*$.  We consider the resulting
torsions $ \tau_{\BB_*}(\S_-;Q)$, $\tau(M,\S_+;Q)$,
$\tau_{\CC_*}(N;Q)$, and $\tau(W,\S_+;Q)$. By applying
\cite[Theorem~3.2]{Mi66} to the above short exact sequence, the
torsions satisfy the following:
\[
  \tau(M,\S_+;Q)\cdot \tau_{\CC_*}(N;Q)\doteq \tau(W,\S_+;Q)\cdot
  \tau_{\BB_*}(\S_-;Q).
\]
We equip $H_*(\L_+;Q)$ with the basis $\DD_*=(j_+)_*\circ
(i_-)_*^{-1}(\BB_*)$.  Similarly we have
\[
  \tau_{\CC_*}(N;Q)\doteq \tau(N,\L_+;Q)\cdot \tau_{\DD_*}(\L_+;Q).
\]
From the above equations and the tautology
$\tau_{\DD_*}(\L_+;Q)=\tau_{\BB_*}(\S_-;Q)$, it follows that
\[
  \tau(W,\S_+;Q) \doteq \tau(M,\S_+;Q) \cdot \tau(N,\L_+;Q).
\]
We now apply $(i_+)_*^{-1}\colon F\to H$ to the above equality. From
the commutative diagram
\[
  \xymatrix{
    H_1(N) \ar[r]^\cong & H_1(W) & H_1(M)=F \ar[l]_\cong\\
    & H \ar[ul]^{(j_+)_*} \ar[ur]_{(i_-)_*} \ar[r]_{\varphi(M)} & H \ar[u]_{(i_+)_*}
  }
\]
it follows that our $H_1(N)\to F$ composed with $(i_+)_*^{-1}$ gives
$\varphi(M)(j_+)_*^{-1}$.  Therefore we have
\[
  \tau(M\cdot N)\doteq \tau(M) \cdot \varphi(M)(\tau(N)). \qedhere
\]
\end{proof}

\begin{remark}
  Let $H$ be a finitely generated free abelian group.  Given a
  non-zero polynomial $p\in \zh$ we denote by $m(p)\in \R_{>0}$ its
  Mahler measure (e.g., see \cite{EW99}, \cite{SW02}, and
  \cite{SW04}).  It is well-known that given $p,q\in \zh$ and $h\in H$
  we have $m(p\cdot q)=m(p)\cdot m(q)$ and $m(\pm h\cdot p)=m(p)$.
  Furthermore given any $\varphi \in \aut(H)$ we have
  $m(\varphi(p))=m(p)$ (e.g., see \cite[Section~3]{EW99}).  It follows
  that $M\mapsto m(\tau(M))$ defines a monoid homomorphism $\cgn \to
  \R_{>0}$. Using this homomorphism and using the examples of Section
  \ref{section:tyingknot} one can reprove Theorem 2.4 in~\cite{GS09}.
\end{remark}

Now let $M$ be a homology cylinder over a surface $\S=\S_{g,n}$ and
let $M'$ be a homology cylinder over a surface $\S'=\S'_{g',n'}$.
Assume that $n,n'>0$ and fix a boundary component $c$ in $\S$ and fix
a boundary component $c'$ in $\S'$.  Recall that gluing now gives us a
homology cylinder $M\cup_{c,c'} M'$ over $\S\cup_{c=c'} \S'$.

\begin{proposition}\label{prop:tauunion}
  Denote by $i$ and $i'$ the inclusion induced maps of $H_1(\S)$ and
  $H_1(\S')$ into $H_1(\Sigma\cup_{c=c'}\Sigma')$, respectively.  Then
  the following holds:
  \[
  \tau(M\cup_{c,c'} M')=i(\tau(M))\cdot i'(\tau(M')).
  \]
\end{proposition}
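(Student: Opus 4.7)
The plan is to adapt the Mayer--Vietoris argument of Proposition~\ref{prop:deltasum} to the gluing along a boundary circle. Write $W = M \cup_{c,c'} M'$, $\widetilde\Sigma = \Sigma\cup_{c=c'}\Sigma'$, and $\widetilde H = H_1(\widetilde\Sigma)$. Let $A\subset W$ be the annular neighborhood of $c = c'$ along which $M$ and $M'$ are identified; after gluing, $A$ lies in the interior of $W$, and I write $\gamma_+$ for the component of $\partial A$ lying in the positive boundary $\widetilde\Sigma_+$ of $W$. Up to a collar isotopy one has $\widetilde\Sigma_+=\Sigma_+\cup_{\gamma_+}\Sigma'_+$ and $W = M\cup_A M'$.

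The crucial topological point is that $c$ separates $\widetilde\Sigma$ into $\Sigma$ and $\Sigma'$, so $[c]=0$ in $\widetilde H$. Consequently the maps $\pi_1(A)\to\widetilde H$ and $\pi_1(\gamma_+)\to\widetilde H$ are trivial; since $A$ deformation retracts onto $\gamma_+$, the pair $(A,\gamma_+)$ is acyclic with trivial $\widetilde H$-action, and hence $\tau(A,\gamma_+;Q(\widetilde H))\doteq \pm 1$. The relative Mayer--Vietoris short exact sequence
\[
0\to C_*(A,\gamma_+;Q(\widetilde H))\to C_*(M,\Sigma_+;Q(\widetilde H))\oplus C_*(M',\Sigma'_+;Q(\widetilde H))\to C_*(W,\widetilde\Sigma_+;Q(\widetilde H))\to 0
\]
then has all three complexes acyclic (the other two by Lemma~\ref{lem:alsozero} applied to the respective homology cylinders), so Milnor's multiplicativity of torsion gives
\[
\tau(W,\widetilde\Sigma_+;Q(\widetilde H))\doteq \tau(M,\Sigma_+;Q(\widetilde H))\cdot \tau(M',\Sigma'_+;Q(\widetilde H)).
\]

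To conclude, I would identify $\tau(M,\Sigma_+;Q(\widetilde H))\doteq i(\tau(M))$, and similarly for $M'$, by naturality of torsion under base change: $C_*(M,\Sigma_+;Q(\widetilde H))$ arises from $C_*(M,\Sigma_+;\Z[H])$ by applying $\otimes_{\Z[H]}\Z[\widetilde H]$ along the map induced by $i$, and one can verify this compatibility directly from the description $\tau\doteq \mathrm{ord}\,H_1(\,\cdot\,;\Z[H])$ of Lemma~\ref{lemma:homology-and-torsion} together with the observation that $i(\tau(M))$ is nonzero in $\Z[\widetilde H]$ (its augmentation is $1$ by Remark~\ref{remark:torsion}(3)). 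The main subtlety is the geometric bookkeeping of how $A$ sits relative to $\widetilde\Sigma_+$ and how the shifted positive boundary decomposes in $W$; once this is set up, the acyclicity of the $A$-piece follows immediately from the separating property of $c$, and the rest is essentially the Proposition~\ref{prop:deltasum} argument with $Q(\widetilde H)$ replacing $Q(H)$.
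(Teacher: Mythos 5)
Your proof is correct and takes the same route as the paper, which simply invokes ``the Mayer--Vietoris formula for torsion'' and asserts the base-change identification $\tau(M;Q(H))=i(\tau(M))$ without elaboration; you supply the details that the paper leaves implicit, in particular the exact sequence with $(A,\gamma_+)$ as the kernel, the observation that $[c]=0$ in $H_1(\Sigma\cup_{c=c'}\Sigma')$ forces the gluing annulus to contribute trivially, and the augmentation argument guaranteeing that $i(\tau(M))\ne 0$ so that the base change is well posed.
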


\begin{proof}
  Write $H=H_1(\Sigma\cup_{c=c'}\Sigma')$.  By the Mayer-Vietoris
  formula for torsion we have
  \[
  \tau(M\cup_{c,c'} M')=\tau(M;Q(H))\cdot \tau(M';Q(H)).
  \]
  It is seen easily that $\tau(M;Q(H))=i(\tau(M))$ and
  $\tau(M';Q(H))=i'(\tau(M'))$ from the definitions.
\end{proof}

\subsection{Torsion and homology cobordisms}

Let $H$ be a free abelian multiplicative group. We equip $\zh$ with
the standard involution of a group ring, i.e. $\ol{h}=h^{-1}$ for
$h\in H$ and extend it to $Q(H)$ by setting $\ol{p\cdot
  q^{-1}}=\ol{p}\cdot \ol{q}^{-1}$.

The next theorem can be viewed as a generalization of the classical
Fox-Milnor theorem \cite{FoM66} that the Alexander polynomial of a
slice knot factors as $\Delta_K(t)\doteq f(t)f(t^{-1})$ for some
$f(t)\in \zt$.

\begin{theorem} \label{thm:homcob}
Let $M=\mi$ and $N=\nj$ be homology cylinders over a surface
$\Sigma$ which are homology cobordant. Write $H=H_1(\Sigma)$. Then
\[
  \tau(M)\doteq \tau(N) \cdot q\cdot \bar{q} \in Q(H)^\times
\]
for some $q\in Q(H)^\times$.
\end{theorem}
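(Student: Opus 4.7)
The plan is to adapt the classical Fox--Milnor slicing argument to the homology cylinder setting. Let $W$ be the $4$-dimensional homology cobordism between $M$ and $N$ provided by hypothesis, so that $\partial W = M \cup_\Sigma (-N)$ is glued via the identifications $i_\pm(x) = j_\pm(x)$, and the inclusions $M, N \hookrightarrow W$ induce isomorphisms on integral homology. Composing the Hurewicz map with $H_1(W) \cong H_1(M)$ and $(i_+)_*^{-1} \colon H_1(M) \to H$ yields a homomorphism $\pi_1(W) \to H$, and because $i_+$ and $j_+$ are identified on $\partial W$, this restricts to the natural maps used to define both $\tau(M)$ and $\tau(N)$. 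Throughout, let $Q = Q(H)$.

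First I would check that the relevant $Q$-homologies all vanish, so that the needed torsions are defined in $Q^\times$. The hypotheses give $H_*(W,M) = H_*(W,N) = 0$ and $H_*(M,\Sigma_+) = H_*(N,\Sigma_+) = 0$; by Lemma~\ref{lem:alsozero} the corresponding $Q$-homologies also vanish, and a triple long exact sequence then yields $H_*(W,\Sigma_+; Q) = 0$.

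Next, the standard multiplicativity of torsion coming from the short exact sequences of chain complexes for the triples $(\Sigma_+, M, W)$ and $(\Sigma_+, N, W)$ gives
\[
\tau(M) \cdot \tau(W,M;Q) \doteq \tau(W,\Sigma_+;Q) \doteq \tau(N) \cdot \tau(W,N;Q),
\]
so that $\tau(M)/\tau(N) \doteq \tau(W,N;Q)/\tau(W,M;Q)$.

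The key ingredient is Milnor's Poincar\'e--Lefschetz duality for Reidemeister torsion, applied to the oriented $4$-manifold $W$ with boundary decomposed as $M \cup (-N)$, using the standard involution on $Q(H)$ given by $\bar h = h^{-1}$. For dimension $n=4$ this duality produces the relation $\tau(W,M;Q) \cdot \overline{\tau(W,N;Q)} \doteq 1$. Setting $q = \tau(W,M;Q)^{-1}$ and combining with the identity above,
\[
\tau(M)/\tau(N) \doteq \bigl(\tau(W,M;Q) \cdot \overline{\tau(W,M;Q)}\bigr)^{-1} = q \cdot \bar q,
\]
as desired. The main obstacle I anticipate is verifying the precise form of Milnor's duality in this setting with nontrivial twisted coefficients in $Q(H)$, keeping track of orientations and the sign conventions in $\doteq$; the dimension $n=4$ is crucial here, since the analogous odd-dimensional duality would produce a ratio $\bar p/p$ which in general does not factor as a norm $q \bar q$.
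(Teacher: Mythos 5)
Your proposal is correct and follows essentially the same route as the paper's own proof: work over the $4$-dimensional homology cobordism $W$, apply multiplicativity of torsion to the triples $(\Sigma_+, M, W)$ and $(\Sigma_+, N, W)$, and then invoke duality for Reidemeister torsion to obtain $\tau(W,M;Q)\doteq\overline{\tau(W,N;Q)}^{-1}$. The only cosmetic difference is your choice of $q=\tau(W,M;Q)^{-1}$ rather than $q=\tau(W,N;Q)$, which are $\doteq$-equivalent up to conjugation by duality; your closing remark about the parity of the dimension being essential for the factorization $q\bar q$ (versus a ratio $p/\bar p$) is also correct.
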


\begin{proof}
Let $W$ be a homology cobordism between $M$ and $N$.  View $\S_+=
i_+(\S)=j_+(\S)$ as a subspace of~$W$. Note that $H_1(\S_+)\to
H_1(W)$ is an isomorphism. We now equip $W$ with the homomorphism
\[
  H_1(W) \xrightarrow{\cong}  H_1(\S_+)
  \xrightarrow{(i_+)_*^{-1}} H.
\]
By the above we have $H_1(W,\Sigma_+)=0$.  Therefore it follows from
Lemma~\ref{lem:alsozero} that $H_1(W,\S_+;Q(H))=0$ and we can
therefore consider $\tau(W):=\tau(W,\S_+;Q(H)) \in Q(H)$.  Also note
that $H_*(W,M)=0$ and $H_*(W,N)=0$.  We can hence also think of the
torsion invariants $\tau(W,M):=\tau(W,M;Q(H))$ and
$\tau(W,N):=\tau(W,N;Q(H))$. From the following short exact sequence
of acyclic chain complexes
$$
  0\to C_*(M,\S_+;Q(H))\to C_*(W,\S_+;Q(H)) \to C_*(W,M;Q(H))\to 0,
$$
we obtain $\tau(W) \doteq \tau(M)\cdot \tau(W,M)$. Similarly,
$\tau(W) \doteq \tau(N) \cdot \tau(W,N)$. By duality we have
$\tau(W,M)\doteq \overline{\tau(W,N)}^{-1}$ (e.g., see \cite{Mi66},
\cite{KL99} and \cite{CF10}).  Hence
$$
\tau(M) \doteq \tau(N) \cdot \tau(W,N)\cdot \overline{\tau(W,N)},
$$
and the theorem follows.
\end{proof}

Define
\[
N(H)=\{\pm h \cdot q \cdot \bar q \mid q \in Q(H)^\times,\, h\in H\}.
\]
Obviously $N(H)$ is a subgroup of~$Q(H)^\times$.  As we do with
$\AH=A(H)$, we often write $\NH=N(H)$.

The following corollary is now an immediate consequence of
Corollary~\ref{cor:metabmap} and Theorem~\ref{thm:homcob}. This
corollary should be compared to \cite[Theorem~5.1]{Mo08}.

\begin{corollary}\label{cor:metabmap2}
The map $\cgn\to \Aut^*(H)\ltimes Q(H)^\times$ defined in Corollary
\ref{cor:metabmap} descends to a group homomorphism
\[  \hgn \to \Aut^*(H)\ltimes Q(H)^\times/\NH.\]
\end{corollary}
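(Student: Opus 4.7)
The plan is to establish two things: (i) that $\NH$ is a normal subgroup of the semidirect product $\Aut^*(H)\ltimes Q(H)^\times$, so that forming the quotient is legitimate, and (ii) that the monoid homomorphism $M\mapsto(\varphi(M),\tau(M))$ of Corollary~\ref{cor:metabmap} sends homology cobordant cylinders to the same coset. Once these are in hand, the universal property of $\hgn$ (which is by definition the quotient of the monoid $\cgn$ by the homology cobordism relation, and is moreover a group) automatically produces the desired group homomorphism on $\hgn$.

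For (i), since $\NH\subseteq Q(H)^\times$ sits inside the abelian normal factor of the semidirect product, it suffices to verify that $\psi(\NH)=\NH$ for every $\psi\in\Aut^*(H)$. This is immediate: any $\psi\in\Aut(H)$ is a group automorphism of $H$ and hence commutes with the involution $h\mapsto h^{-1}$, so $\psi(\pm h\cdot q\bar q)=\pm\psi(h)\cdot\psi(q)\cdot\overline{\psi(q)}\in\NH$.

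For (ii), let $M=\mi$ and $N=\nj$ be homology cobordant via a $4$-manifold $W$ in which $i_\pm$ and $j_\pm$ are identified along $\S$. The torsion factor is handled by Theorem~\ref{thm:homcob}, which gives $\tau(M)\doteq\tau(N)\cdot q\bar q$ for some $q\in Q(H)^\times$; since $\doteq$ absorbs the $\pm h$ ambiguity into $\NH$, the classes of $\tau(M)$ and $\tau(N)$ in $Q(H)^\times/\NH$ agree. For the $\Aut^*(H)$ factor, the inclusions $H_1(M)\to H_1(W)$ and $H_1(N)\to H_1(W)$ are isomorphisms by the definition of a homology cobordism, and the gluing condition $i_\pm=j_\pm$ along $\S$ forces the composites $H=H_1(\S)\to H_1(\S_\pm)\to H_1(W)$ obtained via $i_\pm$ and via $j_\pm$ to coincide. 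Computing $\varphi(M)$ and $\varphi(N)$ through $H_1(W)$ therefore yields the same automorphism of $H$, so $\varphi(M)=\varphi(N)$.

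I do not foresee any serious obstacle; the argument is a direct compatibility check. The only step demanding a little care is the diagram chase establishing $\varphi(M)=\varphi(N)$, and even that is a routine consequence of unpacking the definition of homology cobordism. Combining (i) and (ii) then gives the desired factorization $\hgn\to\Aut^*(H)\ltimes Q(H)^\times/\NH$.
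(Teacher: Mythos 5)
Your proof is correct and matches the paper's intent exactly; the paper simply asserts that the corollary is an immediate consequence of Corollary~\ref{cor:metabmap} and Theorem~\ref{thm:homcob}, and your argument is precisely the unpacking of that assertion (normality of $\NH$ via $\Aut^*(H)$-invariance, invariance of $\tau$ modulo $\NH$ under homology cobordism via Theorem~\ref{thm:homcob}, and the straightforward diagram chase showing $\varphi(M)=\varphi(N)$). No further comment is needed.
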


Since we are mostly interested in abelian quotients of $\hgn$ we
will for the most part work with the following corollary, which  is
an immediate consequence of Theorem~\ref{thm:homcob}.

\begin{corollary}\label{corollary:group-homomorphism}
  The torsion invariant gives rise to a group homomorphism
  \[
  \tau\colon \hgn \to Q(H)^\times/\ANH,
  \]
  where $H=H_1(\sgn)$.  This descends to a homomorphism of the
  quotient of $\hgn$ modulo the normal subgroup $\langle\mgn\rangle$
  generated by the mapping class group~$\mgn$.
\end{corollary}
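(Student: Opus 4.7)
The plan is to combine the monoid homomorphism of Corollary \ref{corollary:monoid-homomorphism} with the cobordism invariance of Theorem \ref{thm:homcob}, and then read off the statement about $\langle\mgn\rangle$ from the fact that mapping cylinders have trivial torsion.

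First, I will observe that $Q(H)^\times$ is an abelian group, so $A(H)$ and $N(H)$ are automatically normal subgroups; in particular $A(H)N(H)$ is a well-defined subgroup, and $Q(H)^\times/A(H)N(H)$ is an abelian group. Composing the monoid homomorphism $\tau\colon\cgn\to Q(H)^\times/A(H)$ from Corollary \ref{corollary:monoid-homomorphism} with the quotient map $Q(H)^\times/A(H)\to Q(H)^\times/A(H)N(H)$ produces a monoid homomorphism $\widetilde\tau\colon\cgn\to Q(H)^\times/A(H)N(H)$ into an abelian group.

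Next, I will show $\widetilde\tau$ factors through $\hgn$. Suppose $M$ and $N$ are homology cobordant homology cylinders over $\Sigma$. Theorem \ref{thm:homcob} gives $\tau(M)\doteq\tau(N)\cdot q\bar q$ for some $q\in Q(H)^\times$, where $\doteq$ means equality up to a factor $\pm h$ with $h\in H$. Taking $p=1$ in the definition of $A(H)$ shows $\pm h\in A(H)$, while $q\bar q\in N(H)$ by definition of $N(H)$. Hence $\tau(M)$ and $\tau(N)$ have the same image in $Q(H)^\times/A(H)N(H)$, so $\widetilde\tau$ descends to a well-defined map on the set of homology cobordism classes. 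Because $\hgn$ is a group and the target is an abelian group, and because the descended map is compatible with the stacking product by construction, the induced map $\tau\colon\hgn\to Q(H)^\times/A(H)N(H)$ is a group homomorphism.

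Finally, for the statement about $\langle\mgn\rangle$, I will invoke Remark \ref{remark:torsion}(5): for any $\varphi\in\mgn$, we have $\tau(M(\varphi))\doteq 1$ in $\Z[H]$, hence $\tau$ sends every element of the image of $\mgn\to\hgn$ to the identity in $Q(H)^\times/A(H)N(H)$. The kernel of any homomorphism to an abelian group is a normal subgroup containing the commutator subgroup, so it must contain the normal subgroup $\langle\mgn\rangle$ generated by $\mgn$. Therefore $\tau$ factors through $\hgn/\langle\mgn\rangle$. There is no genuine obstacle here; the content has already been done in Corollary \ref{corollary:monoid-homomorphism} and Theorem \ref{thm:homcob}, and the only point requiring care is verifying that the Fox--Milnor-type factor $\pm h\cdot q\bar q$ produced by Theorem \ref{thm:homcob} really lies in the subgroup $A(H)N(H)$, which is immediate from the definitions.
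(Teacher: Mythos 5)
Your proposal is correct and follows essentially the same route the paper has in mind: it composes the monoid homomorphism of Corollary~\ref{corollary:monoid-homomorphism} with the further quotient, uses Theorem~\ref{thm:homcob} to see that homology cobordant cylinders have torsions differing by an element of $\ANH$, and then invokes Remark~\ref{remark:torsion}(5) to kill the normal closure of $\mgn$. The paper simply declares the corollary ``an immediate consequence of Theorem~\ref{thm:homcob}''; your write-up supplies precisely those details. One small simplification you could make: the entire factor $\pm h\cdot q\bar q$ from Theorem~\ref{thm:homcob} already lies in $N(H)$ by its definition, so the separate observation that $\pm h\in A(H)$ is not needed (though it is of course true).
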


In later sections, we will show that the image of this torsion
homomorphism is large.  Indeed, it has a quotient isomorphic to
$(\Z/2)^\infty$ if $b_1(\Sigma_{g,n})=\rank H >0$, and has a
quotient isomorphic to $\Z^\infty$ if either $n>2$, or $n=2$ and
$g>0$.

\subsection{Symmetry and asymmetry of torsion}
\label{subsection:symmetry-asymmetry}

It is well-known that the Alexander polynomial $\Delta_K(t)$ of a knot
$K$ is symmetric, i.e. $\Delta_K(t)\doteq \Delta_K(t^{-1})$.
Furthermore, the Alexander polynomial of any closed $3$--manifold or
any $3$--manifold with toroidal boundary is symmetric (e.g.,
see~\cite{Tu86}). In this section we will study the symmetry
properties of the torsion of homology cylinders.

We start out with the following observation.

\begin{lemma}
  Let $\S$ be a surface which is either an annulus or a surface of
  genus one with at most one boundary component.  Let $M$ be a
  homology cylinder over $\S$.  Then $\tau(M)\doteq \ol{\tau(M)}$.
\end{lemma}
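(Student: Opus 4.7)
The plan is to reduce the symmetry of $\tau(M)$ to the classical symmetry of the Alexander polynomial $\Delta_M \doteq \ord H_1(M;\Z[H])$ for 3--manifolds with toroidal (or empty) boundary.

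First I would handle the annulus and closed torus cases simultaneously: in both, $\Sigma_+$ is an aspherical $K(H,1)$, so its universal abelian cover is Euclidean space and hence contractible. It follows that $H_i(\Sigma_+;\Z[H]) = 0$ for all $i > 0$, and the long exact sequence of the pair $(M,\Sigma_+)$ with $\Z[H]$--coefficients immediately yields $H_1(M;\Z[H]) \cong H_1(M,\Sigma_+;\Z[H])$, so that $\tau(M) \doteq \Delta_M$ by Lemma~\ref{lemma:homology-and-torsion}. Since $\partial M$ is a disjoint union of tori ($T^2$ in the annulus case, $T^2 \sqcup T^2$ in the closed torus case), the classical symmetry $\Delta_M \doteq \overline{\Delta_M}$ of the Alexander polynomial of a compact orientable 3--manifold with toroidal or empty boundary (see, e.g., \cite{Tu86}) gives the conclusion.

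For $\Sigma = \Sigma_{1,1}$, this direct approach fails since $\Sigma_{1,1}$ is homotopy equivalent to $S^1 \vee S^1$, whose universal abelian cover is noncontractible. I would instead cap off to reduce to the closed torus case. Let $E$ denote the product homology cylinder over the disk $D^2$, and form $M' = M \cup_{c,c'} E$ by gluing along the unique boundary circle $c$ of $\Sigma_{1,1}$ and the boundary $c' = \partial D^2$. Then $M'$ is a homology cylinder over $\Sigma_{1,1} \cup_\partial D^2 = \Sigma_{1,0}$. The inclusion $\Sigma_{1,1} \hookrightarrow \Sigma_{1,0}$ induces an isomorphism on $H_1$ (since $\partial \Sigma_{1,1}$ is a commutator in $\pi_1$, hence nullhomologous), and $\tau(E) \doteq 1$ by Remark~\ref{remark:torsion}; Proposition~\ref{prop:tauunion} then yields $\tau(M') \doteq \tau(M)$ under this identification. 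Applying the closed torus case to $M'$ gives $\tau(M) \doteq \tau(M') \doteq \overline{\tau(M')} \doteq \overline{\tau(M)}$.

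The main technical input is the classical symmetry of the Alexander polynomial for 3--manifolds with toroidal or empty boundary, which will be invoked as a black box; the rest of the argument is a straightforward application of the machinery developed earlier in this section.
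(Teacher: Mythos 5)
Your proof is correct and arrives at the lemma by a genuinely different route in the base cases, while the reduction for $\Sigma_{1,1}$ (capping with a disk and invoking Proposition~\ref{prop:tauunion} together with $\tau$ of the product cylinder being trivial) is identical to what the paper does.

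For the annulus and torus cases, the paper stays entirely inside Reidemeister torsion calculus: it observes that $\Sigma_\pm$ has Euler characteristic zero and is $Q(H)$--acyclic, computes $\tau(\Sigma_+;Q)\doteq\tau(\Sigma_-;Q)$ explicitly (trivial for the torus, $(1-t)^{-1}$ for the annulus), uses multiplicativity for the pair $(M,\Sigma_\pm)$ to get $\tau(M,\Sigma_\pm;Q)=\tau(M;Q)\cdot\tau(\Sigma_\pm;Q)^{-1}$, and then applies Milnor duality $\tau(M,\Sigma_+;Q)\doteq\overline{\tau(M,\Sigma_-;Q)}$ to close the loop. You instead pass to $\Z[H]$--coefficients, use the contractibility of the universal (abelian) cover of $\Sigma_+$ to kill $H_i(\Sigma_+;\Z[H])$ for $i>0$, chase the long exact sequence of $(M,\Sigma_+)$ (using that $H_0(\Sigma_+;\Z[H])\to H_0(M;\Z[H])$ is the identity $\Z\to\Z$) to identify $H_1(M,\Sigma_+;\Z[H])\cong H_1(M;\Z[H])$, and hence $\tau(M)\doteq\Delta_M$ by Lemma~\ref{lemma:homology-and-torsion}; then you invoke the classical symmetry of the Alexander polynomial of a compact orientable $3$--manifold with toroidal (or empty) boundary. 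The two approaches are essentially the same depth: the classical symmetry of $\Delta_M$ you are citing is itself a consequence of the same duality the paper deploys directly, so you trade the paper's self-contained computation for a black-box reference. Your version makes visible the pleasant intermediate fact $\tau(M)\doteq\Delta_M$ in these cases (which the paper also records informally, cf.\ Remark~\ref{remark:torsion}(4) for the annulus), while the paper's version is more elementary relative to the torsion machinery already set up in the section. Both are correct; the one small imprecision in your write-up is calling the universal cover of the annulus ``Euclidean space'' — it is $\mathbb{R}\times[0,1]$ — but contractibility is all you use, so nothing is affected.
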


\begin{proof}
  First assume that $\S$ is either an annulus or a torus.  We write
  $H=H_1(M)$, $Q=Q(H)$ and consider the torsion $\tau(M,\S_+;Q)$
  corresponding to $\pi_1(M)\to H_1(M)=H$.  Clearly it suffices to
  show that $\tau(M,\S_+;Q)$ is symmetric.  First note that $\S_+$ has
  Euler characteristic zero. It can be seen easily that
  $H_*(\S_+;Q)=0$, in particular its torsion $\tau(\S_+;Q)$ is
  defined.  Also, from this it follows that $M$ is $Q$-acyclic so that
  $\tau(M;Q)$ is defined.  It is well-known that the torsion of a
  torus is trivial and for an annulus it equals $(1-t)^{-1}$, where
  $t$ is a generator of the first homology group. It follows in
  particular that $\tau(\S_+;Q)\doteq \tau(\S_-;Q)$.  From the long
  exact sequence of torsion corresponding to the pair $(M,\S_\pm)$ it
  now follows that $\tau(M,\S_\pm;Q)=\tau(M;Q)\cdot
  \tau(\S_\pm;Q)^{-1}$.  Also, from duality for torsion we have
  $\tau(M,\S_+;Q)\doteq \ol{\tau(M,\S_-;Q)}$.  Combining these
  equalities, it follows that
  \begin{align*}
    \tau(M,\S_+;Q)&=\tau(M;Q)\cdot \tau(\S_+;Q)^{-1}\\
    &=\tau(M;Q)\cdot \tau(\S_-;Q)^{-1}\\
    &=\tau(M,\S_-;Q)\\
    &\doteq \ol{\tau(M,\S_+;Q)}
  \end{align*}
  as desired.

  Now assume that $\S$ is a surface of genus one with exactly one
  boundary component.  Let $\S'$ be a disk and $T=\S\cup \S'$ the
  result of gluing $\S$ and $\S'$ along their boundary components. Let
  $M'=\S'\times [0,1]$.  Note that $H_1(\S)\to H_1(T)$ is an
  isomorphism.  It now follows immediately from Proposition
  \ref{prop:tauunion} that $\tau(M)=\tau(M\cup M')$, in particular
  $\tau(M)$ equals the torsion of a homology cylinder over a torus,
  which is symmetric as we saw above.
\end{proof}

In general torsion is not symmetric though. To our knowledge this phenomenon
was first observed in \cite[Example~8.5]{FJR09}
in the context of sutured manifolds and rational homology cylinders.
More examples will be given in Section \ref{section:construction-computation}.

On the other hand recall that in order to define the torsion
homomorphism in Corollaries~\ref{corollary:monoid-homomorphism} and
\ref{corollary:group-homomorphism}, we thought of torsion invariants
modulo $\AH=A(H)$, i.e., up to the action of $\Aut^*(H)$.  When the
number of boundary components $n\le 1$, $\Aut^*(H)$ is simply the
group of automorphisms on $H$ preserving the intersection pairing
on~$\Sigma$.  In particular, the map $\varphi(h)=h^{-1}$ is in
$\Aut^*(H)$. Therefore $p\cdot \AH = \varphi(p)\cdot \AH = \bar p \cdot
\AH$ in $Q(H)^\times / \AH$ for any~$p$.  In other words, modulo $\AH$,
everything is symmetric:

\begin{lemma}\label{lemma:symmetry-up-to-aut*}
  Suppose $\Sigma$ is a surface with at most one boundary components.
  Then for any homology cylinder $M$ over $\Sigma$, we have $\tau(M)=
  \overline{\tau(M)}$ in $Q(H)^\times/\AH$.
\end{lemma}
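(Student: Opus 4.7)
The proof should proceed by directly unpacking the hint given in the paragraph immediately preceding the lemma: one only needs to check that the involution $h\mapsto h^{-1}$ lies in $\Aut^*(H)$ in the indicated range of $(g,n)$, after which the conclusion is formal from the definition of $A(H)$.

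First I would verify the structural claim that when $n\le 1$ the group $\Aut^*(H)$ is exactly the group of automorphisms of $H$ preserving the intersection pairing on $\Sigma$. For $n=0$ the subgroup $H_\partial$ is zero, while for $n=1$ the single boundary circle bounds (it is a commutator in $\pi_1(\Sigma_{g,1})$), so $H_\partial = 0$ again. Thus the ``fixes $H_\partial$'' condition is vacuous and only the intersection-form condition remains. In particular, viewing $H$ additively, the involution $\iota\colon h\mapsto -h$ (i.e.\ $h\mapsto h^{-1}$ in the multiplicative notation) satisfies $\langle \iota(a),\iota(b)\rangle = \langle -a,-b\rangle = \langle a,b\rangle$ by bilinearity, so $\iota\in\Aut^*(H)$.

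Next I would observe that the ring automorphism of $\Z[H]$, and hence of $Q(H)$, induced by $\iota$ is exactly the standard involution $\overline{\phantom{x}}$ defined at the start of Section 3.5: it sends each $h\in H$ to $h^{-1}$ and extends by ring homomorphism, which is the defining property of the bar involution. Consequently, for every $p\in Q(H)^\times$, the element $p^{-1}\cdot\iota(p) = p^{-1}\cdot\overline{p}$ belongs to $A(H)$ by the very definition of $A(H)$ as the subgroup generated by the elements $\pm h\cdot p^{-1}\cdot\varphi(p)$ with $\varphi\in\Aut^*(H)$.

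Combining these points, in $Q(H)^\times/A(H)$ we have $\overline{p} = p\cdot(p^{-1}\overline{p}) \equiv p$, i.e.\ $p\cdot A(H) = \overline{p}\cdot A(H)$ for every $p\in Q(H)^\times$. Applying this with $p=\tau(M)$ gives $\tau(M)=\overline{\tau(M)}$ in $Q(H)^\times/A(H)$. The only potential sticking point is the verification that $H_\partial$ vanishes when $n=1$, but this is immediate from the fact that $\partial\Sigma_{g,1}$ represents a commutator of standard generators in $\pi_1(\Sigma_{g,1})$ and hence is null-homologous; no genuine computation is required.
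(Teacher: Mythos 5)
Your proof is correct and follows exactly the same route as the paper: it reduces to noting that $H_\partial = 0$ when $n\le 1$, that $h\mapsto h^{-1}$ preserves the intersection pairing and hence lies in $\Aut^*(H)$, and that the induced map on $Q(H)$ is the bar involution, so $p^{-1}\bar p\in A(H)$. The paper gives this argument informally in the paragraph immediately preceding the lemma; your writeup just spells out the same steps in more detail.
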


Therefore the only remaining case is when either $n>2$, or $n=2$ and
$g>0$.  We will show that in these cases the torsion invariant
$\tau(M)$ is in general asymmetric \emph{even modulo $\AH=A(H)$}.

In the next section we will consider general methods to construct
homology cylinders and to compute their torsion, which will be used to
illustrate the asymmetry of torsion.

\section{Constructions and computation}
\label{section:construction-computation}

In this section we will compute torsion for various homology
cylinders. The examples illustrate the computation of torsion and
they will also be used later to prove Theorems \ref{mainthm} and
\ref{mainthm2}.

\subsection{Handle decomposition}

For any homology cylinder $(M,i_+,i_-)$ over $\Sigma$, the pair
$(M,\Sigma_+)$ admits a handle decomposition without $0$-- and
$3$--handles.  A handle decomposition of a pair $(M,\Sigma_+)$ is
given as submanifolds
\[
\Sigma_+\times[0,1] = M_0 \subset M_1 \subset M_2 = M
\]
where $M_i$ is obtained by attaching $i$-handles to~$M_{i-1}$ for
$i=1,2$.  $M$ is a homology cylinder if and only if the numbers of
$1$-handles and $2$-handles are equal, say $r$, and the boundary map
\[
\partial\colon C_2(M,\Sigma_+)=\Z^r \to C_1(M,\Sigma_+) = \Z^r
\]
is invertible.

From the definition, the torsion $\tau(M)=\tau(M,\Sigma_+;Q(H))$ is equal
to the determinant of the $r\times r$ matrix $A$ over $\Z[H]$ which
represents the $\Z[H]$-coefficient boundary map
\[
\partial\colon C_2(M,\Sigma_+;\Z[H])=\Z[H]^r \to C_1(M,\Sigma_+;\Z[H])
= \Z[H]^r
\]
with respect to the bases given by $1$-- and $2$--handles.  When a
handle decomposition is explicitly given, the boundary map can be
effectively computed using a standard method.

This is often useful in computing the torsion of a homology cylinder
with a given handle decomposition.  For readers who are not familiar
with this type of computation, it can be described best by a
detailed example.  The construction and computation below will also
be used later, to show the existence of nontrivial homomorphisms of
$\hgn$ onto~$\Z$.

\subsection{An example of asymmetric torsion}
\label{subsection:exmaple-handle-decomp}

Let $\Sigma=\Sigma_+=\Sigma_{g,n}$ with $g>0$ and $n=2$.  Let $M_1$ be
the $3$--manifold obtained by attaching one $1$--handle to
$\Sigma\times[0,1]$.  The boundary of $M_1$ is the union of $\Sigma$,
$\partial \Sigma\times[0,1]$, and a genus $g+1$ surface, say
$\Sigma_1$, which has the same boundary as $\Sigma$.  We attach one
$2$--handle to $M_1$ along the simple closed curve $\alpha$ on
$\Sigma_1$ as illustrated in
Figure~\ref{figure:example-attaching-curve}.  The point $*$ is the
basepoint and the gray band
\definecolor{mygray}{gray}{.8}%
{\setbox0=\hbox{\fbox{$a$}}\kern\wd0
  \hbox{\vbox{\setbox1=\hbox{$a$}\hrule width 2em height .4pt
      \begin{color}{mygray}\hrule width 2em height\ht1\end{color}%
      \hrule width 2em height .4pt}%
    \kern-2em\kern-\wd0\copy0\kern2em\copy0}}
represents $a$ parallel strands.  We denote the resulting
$3$--manifold by $M=M(a)$.

\begin{figure}[t]
  \begin{center}
    \labellist
    \small\hair 2pt
    \pinlabel {$x^*$} at 145 63
    \pinlabel {$y_1^*$} at 272 15
    \pinlabel {$y_2^*$} at 272 95
    \pinlabel {$t^*$} at 261 228
    \pinlabel {$\alpha$} at 120 287
    \pinlabel {$\beta$} at 164 239
    \pinlabel {\rotatebox{90}{$a$}} at 103 194
    \pinlabel {\rotatebox{90}{$a$}} at 102 169
    \endlabellist
    \includegraphics[scale=.9]{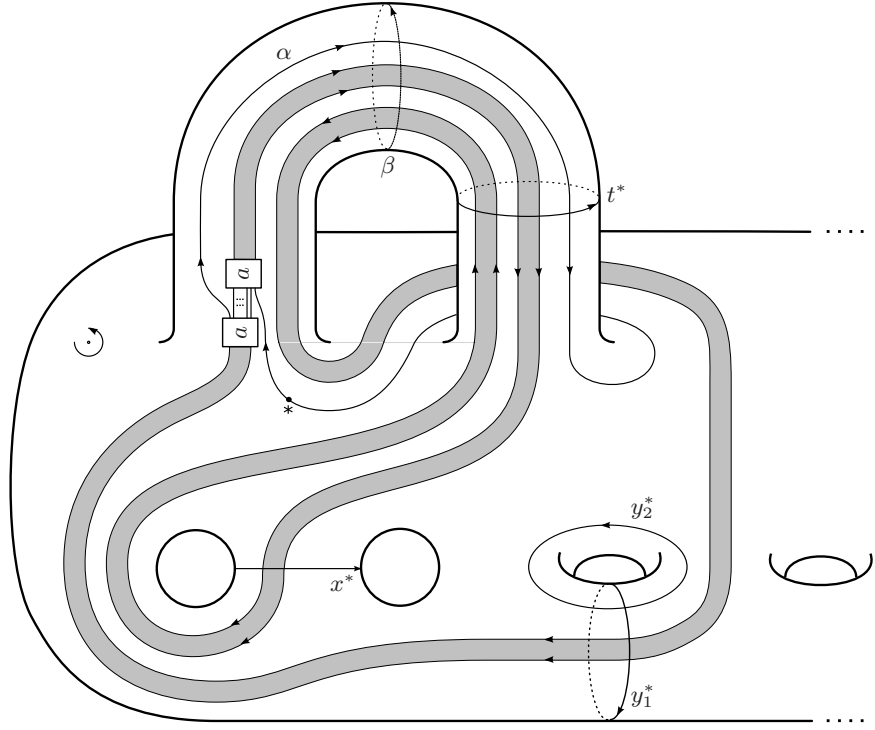}
  \end{center}
  \caption{Attaching circle $\alpha$ on $\Sigma_1$}
  \label{figure:example-attaching-curve}
\end{figure}

We have $H_1(M_1)=\Z\times H$ where $\Z$ is generated by the
$1$--handle.  Recall that we can find a splitting $H=H_\partial \times
\widehat H$ where $H_\partial$ is the image of $H_1(\partial\Sigma)$
and $\widehat H=H_1(\Sigma$ with capped-off boundary$)$.  In our case
$H_\partial$ and $\widehat H$ have rank $1$ and $2g$, respectively.
We choose generators $t$ of $\Z$, $x$ of $H_\partial$, and $y_i$
($i=1,\ldots,2g$) of $\widehat H$ which lie on $\Sigma_1$ and which
are dual to the curves $t^*$, $x^*$, $y_i^*$ illustrated in
Figure~\ref{figure:example-attaching-curve} with respect to the
intersection pairing on~$\Sigma_1$.  That is, $t$ is represented by a
loop which is disjoint to $x^*$, $y_i^*$ and has intersection numbers
$+1$ with $t^*$ on $\Sigma_1$, and similarly for other generators.
Given a curve on $\Sigma_1$, its homology class in $H_1(M_1)$ can be
determined easily by counting the algebraic intersections with the
dual curves.

For the curve $\alpha$, we have $[\alpha]=(t,
x^a_{\vphantom{1}}y_1^a) \in \Z\times H = H_1(M_1)$.  Thus the
homology class of $\alpha$ in $H_1(M_1,\Sigma_+)=\Z$ is $t$, and so
the boundary map
\[
\partial\colon C_2(M,\Sigma_+)=\Z \to C_1(M,\Sigma_+) = \Z
\]
is the identity.  Therefore our $M$ is a homology cylinder.

Now we compute the $\Z[H]$--coefficient boundary map.  Since the
attaching circle gives a relation $t=(xy_1)^{-a}$, the generator $t$
is sent to $(xy_1)^{-a}$ under $H_1(M)
\xrightarrow{\raise-.5ex\hbox{$\scriptstyle\smash{\cong}$}}
H_1(\Sigma_+)=H$.  The $\Z[H]$--valued intersection number
$\lambda(\alpha,\beta)$ of $\alpha$ with the belt circle $\beta$ of
the $1$--handle is given by the following formula: for a point $u$ on
$\alpha$, let $\alpha_u$ be the oriented initial segment of $\alpha$
from the basepoint $*$ to~$u$.  Then
\[
\lambda(\alpha,\beta) = \sum_{u\in \alpha \cap \beta}
\Bigg(\epsilon(u) \cdot \bigg(\prod_{\vphantom{\{}v\in \alpha_u \cap
t^*} (xy_1)^{-a\epsilon(v)}\bigg) \cdot \bigg(\prod_{c\in \{x,y_1,
y_2,\ldots, y_{2g}\}} \prod_{\vphantom{\{}w\in \alpha_u \cap c^*}
c^{\epsilon(w)} \bigg) \Bigg).
\]
Here, $\epsilon(u)$ is the usual sign of $u\in \alpha\cap \beta$.
Precisely, $\epsilon(u)=1$ if $(d\alpha/dt, d\beta/dt)$ at $u$ agrees
with the orientation of~$\Sigma_1$, and $-1$ otherwise.
The numbers $\epsilon(v)$, $\epsilon(w)$ are given similarly.

From Figure~\ref{figure:example-attaching-curve}, it can be seen that
\begin{align*}
  \lambda(\alpha,\beta) & = \big(1+(-1)\cdot x\big) +
  \big( 1\cdot(xy)+(-1)\cdot(x^2y) \big) + \cdots \\
  & \qquad + \big( 1\cdot(x^{a-1}y^{a-1})+(-1)\cdot(x^ay^{a-1}) \big)
  + 1\cdot x^ay^a \\
  & = 1+(y-1)x+y(y-1)x^2+\cdots+y^{a-1}(y-1)x^a
\end{align*}
where $y=y_1$.  The $\Z[H]$--coefficient boundary map is the $1\times
1$ matrix $[\lambda(\alpha,\beta)]$.  Therefore $\tau(M(a)) =
\lambda(\alpha,\beta)$.

It is obvious that $\tau(M(a))$ is asymmetric in the sense that
$\tau(M(a)) \not\doteq \overline{\tau(M(a))}$.  In
Example~\ref{example:non-self-dual-element}, we will show that
$\tau(M(a))$ and $\overline{\tau(M(a))}$ are distinct even
modulo~$\AH=A(H)$.

\subsection{Tying in a string link}

In this
subsection we describe an operation that modifies a homology cylinder
by ``tying in'' a string link, and investigate its effect on the
torsion invariant.

First we recall the definition of a string link.  Fix $m$ points $p_1,
\ldots, p_m$ in the interior of the disk $D^2$.  An $m$--component
string link $\beta$ is the disjoint union of $m$ properly embedded
disjoint oriented arcs $\beta_i$ ($i=1,\ldots,m$) in $D^2\times[0,1]$
from $p_i\times 0$ to $p_i \times 1$.  We denote the exterior of a
string link $\beta$ in $D^2\times[0,1]$ by~$E_\beta$.  The string link
$\bigcup p_i \times [0,1] \in D^2\times[0,1]$ is called the trivial
string link.  The exterior of the trivial string link is denoted
by~$E_0$.  Note that if $\beta$ is framed, then there is a canonical
identification of $\partial E_\beta$ and~$\partial E_0$ which
restricts to the identity on $D^2\times \{0,1\} \cap E_0$.

\begin{remark}
  Including Theorem~\ref{theorem:string-link-sum-formula}, all results
  in this section hold for string links in homology $D^2\times[0,1]$
  as well.
\end{remark}

\subsubsection*{Torsion of string links}

We recall the definition of the torsion invariant for string links
(cf.\ \cite[Definition~6.8]{KLW01}).  Let $X=(D^2\times 0) \cap E_0$,
a subspace of $D^2 \times[0,1]$.  Suppose $\beta$ is a string link and
$\phi\colon H_1(E_\beta) \to H$ is a homomorphism to a free abelian
group~$H$.  Since $(E_\beta,X)$ is $\Z$--acyclic, it follows from
Lemma \ref{lem:alsozero} that $(E_\beta,X)$ is $\Z[H]$--acyclic.  We
denote by $\tau^\phi_\beta \in \zh^\times$ the torsion of the cellular
chain complex $C_*(E_\beta,X;\Z[H])$.  When $\phi$ is clearly
understood from the context we write $\tau_\beta=\tau^\phi_\beta$.  As
usual, $\tau_\beta$ is well-defined up to multiplication by $\pm h$
($h\in H$).

Note that for any string link~$\beta$, $E_\beta$ can be viewed as a
homology cylinder over the surface~$X$.

\subsubsection*{Formula for string link tying}
Now consider a homology cylinder $(M,i_+,i_-)$ on $\Sigma =
\Sigma_{g,n}$ with $H=H_1(\Sigma)$ and an embedding $f\colon E_0 \to
\inte(M)$.  For any framed string link $\beta$, define
\[
M(f,\beta) = \big(M-f(\inte (E_0))\big)
\mathbin{\mathop{\cup}\limits_{f(\partial E_0)=\partial E_\beta}}
E_\beta.
\]
Since $E_\beta$ has the same homology as $E_0$, it follows that
$(M(f,\beta),i_+,i_-)$ is a homology cylinder.  We say that
$M(f,\beta)$ is obtained from $M$ by tying the string link~$\beta$.

The map $f$ gives rise to a homomorphism
\[
H_1(E_\beta) \xrightarrow{\cong} H_1(X) \xrightarrow{\cong} H_1(E_0)
\xrightarrow{f_*} H_1(M)\xleftarrow{} H_1(\S_+)
\xrightarrow{(i_+)_*^{-1}} H_1(\Sigma)=H
\]
induced by the inclusions. We denote the resulting torsion invariant
of $\beta$ by~$\tau_\beta^f$.

\begin{theorem}\label{theorem:string-link-sum-formula}
  The torsion invariant of the homology cylinder $M(f,\beta)$ is given
  by $\tau(M(f,\beta))\doteq \tau(M) \cdot \tau_\beta^f$.
\end{theorem}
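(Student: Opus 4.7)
The plan is a Mayer--Vietoris style argument for torsion, analogous to the proof of Proposition~\ref{prop:deltasum}, comparing $M$ and $M(f,\beta)$ via a common decomposition. Set $Q=Q(H)$, $N = M - f(\inte E_0)$, and $T = f(\partial E_0) = \partial E_\beta$, so that $M = N \cup_T f(E_0)$ and $M(f,\beta) = N \cup_T E_\beta$. The two decompositions share the common pieces $N$, $T$, $X$, and $\Sigma_+$, and differ only in that $E_0$ is replaced by $E_\beta$. Since $E_0 = X\times[0,1]$, the pair $(E_0, X)$ is chain-contractible with $\tau(E_0, X; Q) \doteq 1$, while $(E_\beta, X)$ is $\Z$-acyclic (hence $\Z[H]$-acyclic by Lemma~\ref{lem:alsozero}) with $\tau(E_\beta, X; Q) \doteq \tau_\beta^f$ by definition.

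The heart of the argument is to apply multiplicativity of torsion \cite[Theorem~3.2]{Mi66} to the Mayer--Vietoris short exact sequence of relative chain complexes
\[
0 \to C_*(T, X; Q) \to C_*(N, \Sigma_+ \cup X; Q) \oplus C_*(E, X; Q) \to C_*(M', \Sigma_+ \cup X; Q) \to 0,
\]
in the two cases $(E, M') = (E_0, M)$ and $(E, M') = (E_\beta, M(f,\beta))$. Comparing the two resulting Milnor equations term by term, everything independent of $E$ cancels and one obtains
\[
\tau(M(f,\beta), \Sigma_+ \cup X; Q) \doteq \tau(M, \Sigma_+ \cup X; Q) \cdot \tau_\beta^f.
\]
The legitimacy of this cancellation hinges on making compatible basis choices: because both $(M, \Sigma_+)$ and $(M(f,\beta), \Sigma_+)$ are $Q$-acyclic, the triple long exact sequence of $(M', \Sigma_+ \cup X, \Sigma_+)$ yields a canonical identification $H_*(M', \Sigma_+ \cup X; Q) \cong H_{*-1}(X; Q)$ that agrees for $M' = M$ and $M' = M(f,\beta)$, and all other homologies appearing in the Mayer--Vietoris long exact sequence depend only on the common data $(N, T, X, \Sigma_+)$.

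To conclude, I would apply the multiplicativity theorem once more to the triple long exact sequence of pairs $(M', \Sigma_+ \cup X, \Sigma_+)$, for $M' = M$ and $M' = M(f,\beta)$, in order to express $\tau(M') = \tau(M', \Sigma_+; Q)$ in terms of $\tau(M', \Sigma_+ \cup X; Q)$ and correction factors coming from $H_*(X; Q)$ and the triple connecting maps. The corrections depend only on $X$ and the unchanged inclusion $X \hookrightarrow N \subset M'$, so they are identical for $M$ and $M(f,\beta)$; dividing the two equations gives $\tau(M(f,\beta)) \doteq \tau(M) \cdot \tau_\beta^f$, as required.

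The principal obstacle is careful bookkeeping of compatible bases through two applications of Milnor's theorem, since several of the intermediate chain complexes, notably $C_*(T, X; Q)$, $C_*(N, \Sigma_+ \cup X; Q)$, and $C_*(M', \Sigma_+ \cup X; Q)$, need not be $Q$-acyclic. The observation that makes the whole strategy work is that, other than the torsions of $(E, X)$ themselves, every input to the two Milnor equations depends only on the common pieces $(N, T, X, \Sigma_+)$ and on the inclusion of $X$ into $N$, so these inputs are literally the same in both cases and drop out when one takes the ratio.
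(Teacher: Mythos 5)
Your proposal is correct and the basic strategy --- comparing $M$ and $M(f,\beta)$ via Mayer--Vietoris with $M'=M-f(\inte(E_0))$ and then dividing two Milnor multiplicativity identities --- is the same as the paper's. But you run the Mayer--Vietoris argument with everything already relative to $X$, while the paper does not. The paper applies \cite[Theorem~3.2]{Mi66} to the sequence
\[
0\to C_*(\partial E_0;Q)\to C_*(M',\Sigma_+;Q)\oplus C_*(E_\beta;Q)\to C_*(M(f,\beta),\Sigma_+;Q)\to 0,
\]
chooses bases on $H_*(X;Q)$ and $H_*(M',\Sigma_+;Q)$ (which determine bases on $H_*(E_\beta;Q)$ via the inclusion $X\hookrightarrow E_\beta$, and on $H_*(\partial E_0;Q)$ via the resulting long exact sequence, since the quotient term is acyclic), divides the two identities to get $\tau(M(f,\beta))\doteq\tau(M)\cdot\tau(E_\beta)\cdot\tau(E_0)^{-1}$, and then converts this to the stated formula using the short exact sequence of the pair $(E_\beta,X)$ together with $E_0=X\times[0,1]$. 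Your version makes $C_*(M',\Sigma_+\cup X;Q)$ the quotient term, which is \emph{not} acyclic; so you must construct a canonical basis on $H_*(M',\Sigma_+\cup X;Q)\cong H_{*-1}(X;Q)$ via the triple $(M',\Sigma_+\cup X,\Sigma_+)$, argue that this identification is compatible with the Mayer--Vietoris connecting maps for both $M$ and $M(f,\beta)$ so that the homological correction terms cancel, and then apply the triple sequence once more to pass from $\tau(M',\Sigma_+\cup X;Q)$ back to $\tau(M')$. This gives the same answer, but at the cost of the extra naturality verification you flag as ``careful bookkeeping''; the paper's route sidesteps it by concentrating all the non-acyclic data into $H_*(X;Q)$ and $H_*(M',\Sigma_+;Q)$, which manifestly do not depend on $\beta$, and by keeping $\tau^f_\beta$ out of the Mayer--Vietoris step altogether.
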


\begin{proof}
  We write $\S_+=i_+(\S)$.  Let $M' = M-f(\inte (E_0))$.  Choose bases
  of $H_*(X;Q(H))$ and $H_*(M',\Sigma_+;Q(H))$.  Since $(E_\beta,X)$
  is $\Z$--acyclic, it is $Q(H)$--acyclic, and therefore for any string
  link $\beta$ (including the trivial one) our basis of $H_*(X;Q(H))$
  determines a basis of $H_*(E_\beta;Q(H))$ via the inclusion-induced
  map.  Define the torsion invariants $\tau(X)$, $\tau(E_\beta)$,
  $\tau(M',\Sigma_+) \in Q(H)^\times$ (up to the usual ambiguity)
  using these bases.

  Consider the following short exact sequence:
  \begin{multline*}
    0 \to C_*(\partial E_0;Q(H)) \to C_*(M',\Sigma_+;Q(H)) \oplus
    C_*(E_\beta;Q(H)) \\
    \to C_*(M(f,\beta),\Sigma_+;Q(H)) \to 0.
  \end{multline*}
  Since $(M(f,\beta),\Sigma_+)$ is $Q(H)$-acyclic, the bases we have
  chosen give rise to a basis of $H_*(\partial E_0;Q(H))$.  Define
  $\tau(\partial E_0) \in Q(H)^\times$ using this basis.  By
  \cite[Theorem 3.2]{Mi66}, we have
  \[
  \tau(M',\Sigma_+)\cdot \tau(E_\beta) \doteq \tau(\partial E_0)\cdot
  \tau(M(f,\beta)).
  \]
  Considering the special case of a trivial string link $\beta$, we
  obtain
  \[
  \tau(M',\Sigma_+)\cdot \tau(E_0) \doteq \tau(\partial E_0)\cdot \tau(M).
  \]
  Combining these, we have
  \[
  \tau(M(f,\beta)) \doteq \tau(M) \cdot \tau(E_\beta) \cdot \tau(E_0)^{-1}.
  \]
  From the short exact sequence
  \[
  0\to C_*(X;Q(H)) \to C_*(E_\beta;Q(H)) \to C_*(E_\beta,X;Q(H))\to 0
  \]
  with $(E_\beta,X)$ acyclic, we obtain $\tau(E_\beta) \doteq \tau(X) \cdot
  \tau_\beta^f$.  Note that $E_0=X\times [0,1]$, hence
  $\tau(E_0)=\tau(X)$.   The desired formula now follows immediately.
\end{proof}

\subsubsection*{Special case: tying in a  knot}
\label{section:tyingknot}

We will now study the case $m=1$.  Note that a string link $\beta$
with one component gives us a knot $K$ in~$S^3$.
Furthermore, for any embedding $f\colon E_0\to \inte(M)$, it can be
seen easily that the torsion $\tau_\beta^f$ is equal to $\Delta_K(h)$,
where $\Delta_K(t)$ is the Alexander polynomial of $K$ and $h$ is the
image of the generator of $H_1(E_0)=\Z$ under the map $H_1(E_0) \to H$
induced by~$f$.  Therefore, by
Theorem~\ref{theorem:string-link-sum-formula}, the torsion of
$M(f,\beta)$ can be described in terms of the Alexander polynomial
of~$K$.  The following is a special case of this, which gives a group
homomorphism of the (smooth or topological) concordance group $\CC$ of
knots in $S^3$ into~$\hgn$. Recall that $E_0=X\times[0,1]$ and $X=E_0
\cap (D^2\times 0)$.

\begin{proposition}\label{prop:embedc}
  Let $g,n\geq 0$.  We write $\S=\sgn$ and $M=\S\times [0,1]$.  Let
  $\iota\colon X\to \inte(\S)$ be an embedding, and let $f\colon E_0
  \to\inte(M)$ be the embedding $f(x,t)=\iota(x,
  t/2+1/4)$. Then the
  assignment $K\mapsto M(f,K)$ descends to a group homomorphism
  \[
  \CC\to \hgn.
  \]
  Furthermore
  \[
  \tau(M(f,K))=\Delta_K(h)
  \]
  where $h$ is the image of the generator of $H_1(X)\cong \Z$ under
  $\iota_*\colon H_1(X) \to H$.
\end{proposition}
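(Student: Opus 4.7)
The plan is to split the proof into two parts: establishing the torsion formula, and showing that the assignment descends to a well-defined group homomorphism.

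The torsion formula is immediate from Theorem~\ref{theorem:string-link-sum-formula}. Since $M=\S\times[0,1]=M(\id)$ is the product homology cylinder, Remark~\ref{remark:torsion}(5) gives $\tau(M)\doteq 1$, so $\tau(M(f,K))\doteq \tau_K^f$. For a $1$--component string link representing the knot $K$, the text immediately preceding the proposition identifies $\tau_K^f$ with $\Delta_K(h)$; unwinding the definition of $f$, the element $h$ is precisely $\iota_*$ of a generator of $H_1(X)\cong\Z$, as claimed.

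For well-definedness and descent to $\CC$, I would proceed as follows. First, any two $1$--component string-link representatives of a knot in $S^3$ are related by an ambient isotopy of $D^2\times[0,1]$ rel boundary, which extends to an isomorphism of the corresponding tied-in homology cylinders, so $M(f,K)$ is well defined on isotopy classes of knots. Given a (smooth or topologically locally flat) concordance between $K_1$ and $K_2$, realize it as a properly embedded annulus $A\subset D^2\times[0,1]\times[0,1]$ between $1$--component string-link representatives. Its exterior $W_A$ is a $4$--dimensional cobordism between $E_{K_1}$ and $E_{K_2}$ whose two end inclusions induce isomorphisms on integral homology, by the standard Alexander-duality argument for concordance exteriors. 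I would then build a $4$--manifold $V$ between $M(f,K_1)$ and $M(f,K_2)$ by gluing $\bigl(M-f(\inte E_0)\bigr)\times[0,1]$ to $W_A$ along the product piece $f(\partial E_0)\times[0,1]$. A Mayer-Vietoris calculation, benchmarked against the trivial case $K_1=K_2=$ unknot (for which $V=M\times[0,1]$), shows that $V$ is a homology cobordism of homology cylinders.

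For the homomorphism property, I would invoke the standard observation that the stacking of $1$--component string links corresponds to connected sum of closure knots. Since $f$ places its tied-in string link in the middle slab $\iota(X)\times[1/4,3/4]$ of $M$, the stacked homology cylinder $M(f,K_1)\cdot M(f,K_2)$ contains the two string links at disjoint heights; a rescaling isotopy identifies this, already in $\cgn$, with $M(f,K_1\#K_2)$. Combined with the well-definedness step, this gives the homomorphism property in $\hgn$. The main obstacle I anticipate is the Mayer-Vietoris verification that the $4$--manifold $V$ built from $W_A$ is genuinely a homology cobordism of homology cylinders: this requires careful bookkeeping of the boundary identifications and of the inclusion-induced maps $H_*(M(f,K_i))\to H_*(V)$, using crucially that the inclusions of the two ends into $W_A$ are $\Z$--homology equivalences.
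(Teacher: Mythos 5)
Your proposal is correct and follows essentially the same route as the paper, which simply asserts that $M(f,K_1\# K_2)\cong M(f,K_1)\cdot M(f,K_2)$, that descent to $\CC$ is straightforward, and that the torsion formula was already established in the preceding paragraph via Theorem~\ref{theorem:string-link-sum-formula}. You have supplied the intended (standard) details that the paper leaves to the reader: the rescaling isotopy identifying the stacked cylinders with $M(f,K_1\# K_2)$, and the homology cobordism obtained by gluing $(M-f(\inte E_0))\times[0,1]$ to the exterior of a concordance annulus.
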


\begin{proof}
  It is straightforward to verify that $M(f,K_1\# K_2) \cong
  M(f,K_1)\cdot M(f,K_2)$ and that the assignment $K\mapsto M(f,K)$ in
  fact descends to a group homomorphism $\CC\to \hgn$.  The conclusion
  on torsion has already been proven in the paragraph above
  Proposition~\ref{prop:embedc}.
\end{proof}

\section{Epimorphisms onto infinitely generated abelian groups}
\label{section:homomorphisms}

In this section we will construct epimorphisms of $\hgn$ onto
nontrivial abelian groups. This will give a proof of
Theorems~\ref{mainthm} and \ref{mainthm2}.  Throughout
Section~\ref{section:homomorphisms}, we fix $g,n\geq 0$ and write
$\Sigma=\sgn$ and $H=H_1(\S)$.

\subsection{Algebraic structure of the torsion group}
\label{section:algeraic structure}

Recall that our torsion invariant lives in the multiplicative abelian
group~$Q(H)^\times/\ANH$.  In this subsection we investigate the
algebraic structure of $Q(H)^\times/\ANH$.  We will think of
symmetric/asymmetric parts of $Q(H)^\times/\ANH$, and define certain
$\Z/2$ and $\Z$-valued homomorphisms which form sets of complete
invariants of these parts, respectively.  To be more precise, let
\[
Q(H)^{sym} = \{p\in Q(H)^\times \mid p=\bar p \text{ in
}Q(H)^\times/\AH\}.
\]
Note that $\ANH\subset Q(H)^{sym}$.  There is an exact
sequence
\[
1 \to \frac{Q(H)^{sym}}{\ANH} \to \frac{Q(H)^\times}{\ANH}
\to \frac{Q(H)^\times}{Q(H)^{sym}} \to 1
\]
which can be viewed as a decomposition of $Q(H)^\times/AN$ into
symmetric and asymmetric parts.

Recall that $H_\partial$ is the image of $H_1(\partial \S)$ in~$H$ and
\[
\Aut^*(H) = \{\varphi\in \Aut(H) \mid \text{$\varphi$ fixes
  $H_\partial$ and preserves the intersection form of $\Sigma$}\}.
\]
We define an equivalence relation $\sim$ on $\zh- \{0\}$ by $p \sim
q$ if $p\doteq \varphi(q)$ for some $\varphi\in \Aut^*(H)$.  Note that
if $p\sim q$ then $p$ and $q$ represent the same element in
$Q(H)^\times/\AH$. From now on we say $p$ is \emph{self-dual} if
$p\sim \bar p$.

Recall that $\Z[H]$ is a unique factorization domain, so that for each
$p\in Q(H)$ and irreducible $\lambda\in \Z[H]$, we can think of the
exponent of $\lambda$ in the factorization of~$p$.  (The exponent is
an integer and may be negative.)  For an irreducible element $\lambda$
in $\zh$, we define a function $e_\lambda\colon Q(H)^\times \to \Z$ as
follows: Given $p\in Q(H)$, $e_\lambda$ is the sum of exponents of
distinct irreducible factors $\mu$ of $p$ such that $\mu\sim \lambda$.
(As usual, irreducible factors are distinguished up to multiplication
by a unit in~$\zh$.)

\begin{proposition}\label{prop:complete}
  \leavevmode\Nopagebreak
  \begin{enumerate}
  \item If $\lambda$ is a self-dual irreducible element in $\zh$, then
    the map
    \[
    \Psi_\lambda\colon Q(H)^{sym}/\ANH \to \Z/2
    \]
    defined by $\Psi_\lambda(p\cdot \ANH)=e_\lambda(p)+2\Z$ is a
    surjective group homomorphism.  Furthermore,
    \[
    \Psi = \bigoplus_{[\lambda]}\Psi_\lambda \colon Q(H)^{sym}/\ANH \to
    \bigoplus_{[\lambda]} \Z/2,
    \]
    is an isomorphism, where $[\lambda]$ runs over the equivalence
    classes of self-dual irreducible~$\lambda$.
  \item If $\mu$ is a non-self-dual irreducible element in $\zh$, then the
    map
    \[
    \Theta_\mu\colon Q(H)^\times /Q(H)^{sym} \to \Z
    \]
    defined by $\Theta_\mu(p\cdot Q(H)^{sym})=e_\mu(p)-e_{\bar\mu}(p)$
    is a surjective group homomorphism.  Furthermore
    \[
    \Theta = \bigoplus_{\{[\mu],[\bar\mu]\}} \Theta_\mu \colon
    Q(H)^\times / Q(H)^{sym}
    \to \bigoplus_{\{[\mu],[\bar\mu]\}} \Z,
    \]
    is an isomorphism,
    where $\{[\mu],[\bar\mu]\}$ runs over the unordered pairs of
    equivalence classes of non-self-dual irreducible $\mu$ and its
    involution $\bar\mu$.
  \end{enumerate}
  Consequently, $Q(H)^\times/\ANH$ is isomorphic to
  $\big(\bigoplus_{[\lambda]} \Z/2\big)\oplus \big(\bigoplus_{\{[\mu],[\bar\mu]\}}
  \Z\big)$.
\end{proposition}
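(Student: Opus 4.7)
My plan rests on the unique factorization structure of $\Z[H]$. Since $H$ is a finitely generated free abelian group, $\Z[H]$ is a UFD, so the quotient of $Q(H)^\times$ by the group of units $\pm H\subset\Z[H]^\times$ is canonically identified with the free abelian group $F_0$ on the set of associate classes of irreducible elements of $\Z[H]$. Each $\varphi\in\Aut^*(H)$ extends to a ring automorphism of $\Z[H]$, hence preserves units and associate classes of irreducibles, inducing an action on $F_0$. Passing further to the quotient by this $\Aut^*(H)$-action (which, together with the units $\pm H$, generates $\AH$), one identifies $Q(H)^\times/\AH$ with the free abelian group $F$ on the set of $\sim$-equivalence classes of irreducibles, and under this identification the $[\lambda]$-coordinate is precisely $e_\lambda$. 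The involution $p\mapsto\bar p$ commutes with both $\pm h$ and the $\Aut^*(H)$-action (since any group automorphism of $H$ commutes with $h\mapsto h^{-1}$), so it descends to a well-defined involution $\iota$ on $F$ with $\iota([\lambda])=[\bar\lambda]$.

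Next I would locate the image of $\NH$ in $F$. Since a norm $q\bar q\in\NH$ has image $v+\iota(v)$ in $F$ where $v$ is the image of $q$, the image of $\AH\NH$ in $F$ is the subgroup $I$ generated by $\{[\lambda]+[\bar\lambda]\}_{\lambda\text{ irred.}}$. Using the basis of $F$ partitioned into self-dual classes $[\lambda]$ and unordered non-self-dual pairs $\{[\mu],[\bar\mu]\}$, one reads off that $I$ has the basis $\{2[\lambda]\}\cup\{[\mu]+[\bar\mu]\}$. By definition, $Q(H)^{sym}$ is the preimage of the $\iota$-fixed subgroup $F^\iota\subset F$, so the identification $Q(H)^{sym}/\AH=F^\iota$ holds, and clearly $I\subset F^\iota$.

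The rest is direct computation of the induced quotients. In the self-dual part, $F^\iota$ is spanned by the single basis vector $[\lambda]$ and $I$ by $2[\lambda]$, so the self-dual part of $F^\iota/I$ is $\Z/2$ and the projection onto it is exactly $e_\lambda\bmod 2=\Psi_\lambda$; this identifies $\Psi$ with the isomorphism $F^\iota/I\cong\bigoplus_{[\lambda]\text{ self-dual}}\Z/2$, yielding (1). Well-definedness of $\Psi_\lambda$ reduces to the two observations that $e_\lambda$ is $\AH$-invariant (the irreducible factors of $p^{-1}\varphi(p)$ come in cancelling $\sim$-equivalent pairs) and that $e_\lambda(q\bar q)=2e_\lambda(q)$ when $\lambda$ is self-dual (because $\nu\sim\lambda$ iff $\bar\nu\sim\lambda$). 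In the non-self-dual part, $F^\iota$ is spanned by $[\mu]+[\bar\mu]$ inside the rank-two span of $[\mu],[\bar\mu]$, so $F/F^\iota\cong\Z$ via $[\mu]\mapsto 1,\ [\bar\mu]\mapsto-1$, matching $\Theta_\mu=e_\mu-e_{\bar\mu}$; vanishing of $\Theta_\mu$ on $Q(H)^{sym}$ follows from $e_\mu(\bar r)=e_{\bar\mu}(r)$ combined with $\bar r\equiv r\bmod\AH$ for $r\in Q(H)^{sym}$. This yields (2).

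Finally, the short exact sequence
\[
1\to Q(H)^{sym}/\ANH\to Q(H)^\times/\ANH\to Q(H)^\times/Q(H)^{sym}\to 1
\]
splits because $Q(H)^\times/Q(H)^{sym}$ is free abelian by (2), giving the stated direct sum decomposition. The main obstacle is purely bookkeeping: verifying well-definedness of $\Psi_\lambda$ and $\Theta_\mu$ on their claimed domains, but after reducing everything to the free abelian model $F$ equipped with the involution $\iota$, each verification becomes an elementary count of $\sim$-equivalent irreducible factors.
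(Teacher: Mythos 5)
Your proof is correct, and it takes a genuinely different route from the paper. The authors argue directly with elements of $Q(H)$: they check well-definedness of $\Psi_\lambda$ and $\Theta_\mu$ by the two observations $e_\lambda(u\bar u)=2e_\lambda(u)$ and $e_\mu(p)=e_{\bar\mu}(p)$ for $p\in Q(H)^{sym}$, check surjectivity by evaluating on irreducibles, and prove injectivity by picking an element in the kernel, factoring it, and showing each exponent is forced to be even (resp.\ equal in conjugate pairs). Your approach instead packages the whole computation into a single free abelian model $F=Q(H)^\times/\AH$ (the coinvariants of the permutation module on associate classes of irreducibles) with the induced involution $\iota$, identifies $Q(H)^{sym}/\AH=F^\iota$ and the image of $\NH$ as the norm subgroup $I=\langle[\lambda]+[\bar\lambda]\rangle$, and then reads off both subquotients by elementary linear algebra over $\Z$. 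This buys conceptual clarity and makes the split of the short exact sequence visible at a glance (the quotient $F/F^\iota$ is manifestly free abelian), at the cost of the up-front work of justifying the identification $Q(H)^\times/\AH\cong F$ as coinvariants of a permutation module — a point the paper sidesteps by working with $e_\lambda$ directly. Both arguments are sound; yours is arguably a cleaner organization of the same underlying combinatorics of unique factorization under the $\Aut^*(H)$-action and the bar involution.
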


\begin{remark}
  \leavevmode\Nopagebreak
  \begin{enumerate}
  \item In the definition of $\Theta$, we have one summand for the two
    classes $[\mu]$ and~$[\bar\mu]$.  Here we have sign ambiguity
    since $\Theta_\mu=-\Theta_{\bar\mu}$, but this does not cause any
    problems in our conclusions.
  \item The homomorphisms $\Psi_\lambda$ and $\Psi$ extend to homomorphisms of
    $Q(H)^\times/\ANH$, which will also be denoted by
    $\Psi_\lambda$ and~$\Psi$.  Also, we denote by $\Theta$ and
    $\Theta_\mu$ the homomorphisms of $Q(H)^\times/\ANH$ induced by
    $\Theta$ and~$\Theta_\mu$.  Then the isomorphism in the last
    sentence of Theorem~\ref{theorem:algebraic-classification} can be
    written as $(\Psi,\Theta)$.
  \item Although $\Theta_\mu$ could be defined for self-dual $\mu$ as
    well, it is not interesting since the resulting $\Theta_\mu$ is
    always zero.
  \end{enumerate}
\end{remark}

\begin{proof}
  First we will observe that $\Psi_\lambda$ and $\Psi$ are
  well-defined surjective homomorphisms.  Since the factorization into
  irreducible factors is preserved by the $\Aut^*(H)$-action,
  $e_\lambda$ is invariant under~$\sim$ for any irreducible~$\lambda$.
  If $\lambda$ is self-dual, then $e_\lambda(u\bar u) =
  e_\lambda(u)+e_{\bar\lambda}(\bar u) = 2e_\lambda(u)$.  It follows
  that $\Psi_\lambda$ is a well-defined homomorphism.  The
  surjectivity of $\Psi_\lambda$ and $\Psi$ follows from the observation that
  $\Psi_{\lambda'}(\lambda\cdot \AH)$ equals 1 if $\lambda' \sim
  \lambda$, and 0 otherwise.

  To see that $\Theta_\mu$ and $\Theta$ are well-defined
  homomorphisms, observe that for $p\in Q(H)^{sym}$, $e_\mu(p) =
  e_{\bar\mu}(\bar{p}) = e_{\bar \mu}(p)$.  Their surjectivity now follows
  from the observation that  for non-self-dual irreducible $\mu$ and $\mu'$,
  $\Theta_{\mu'}(\mu\cdot Q(H)^{sym})$ equals $1$ if $\mu'\sim\mu$, $-1$
  if $\mu'\sim\bar\mu$, and 0 otherwise.

  For the injectivity of $\Psi$, suppose $f\in Q(H)^{sym}$ represents
  an element in the kernel of~$\Psi$.  We can rewrite the irreducible
  factorization of $f$ to obtain an expression
  $f=\lambda_1^{m_1}\cdots \lambda_r^{m_r}\cdot u$, where the
  $\lambda_i$ are mutually non-equivalent self-dual irreducible
  elements, $m_i\in \Z$, and $u\in \ANH$.  Note that $\lambda_i^2
  = \lambda_i\bar \lambda_i = 1$ in $Q(H)^{sym}/\ANH$ since
  $\lambda_i$ is self-dual.  Evaluating $\Psi_{\lambda_i}$, we have
  that $m_i$ is even for each~$i$.  From this it follows that $f=1$ in
  $Q(H)^{sym}/\ANH$.

  The injectivity of $\Theta$ is proved similarly: if $f\in
  Q(H)^\times$ represents an element in the kernel of $\Psi$, then
  from the irreducible factorization of $f$ we obtain an expression
  $f=\mu_1^{m_1}\bar\mu_1^{n_1}\cdots \mu_r^{m_r}\bar\mu_r^{n_r}\cdot
  u$, where $m_i, n_i\in \Z$, $u\in Q(H)^{sym}$, and the $\mu_i$ are
  non-self-dual irreducible elements such that $\mu_i\not\sim \mu_j
  \not\sim \bar\mu_i$ whenever $i\ne j$.  Evaluating $\Theta_{\mu_i}$,
  we have $m_i=n_i$ for each~$i$.  It follows that $f=1$ in
  $Q(H)^\times/Q(H)^{sym}$.
\end{proof}

Now, in order to understand the structure of $Q(H)^\times/\ANH$, the
only remaining part is to count the number of summands of $\Psi$ and
$\Theta$ in Proposition~\ref{prop:complete}.  To state the result, we
introduce the following definition: we say that $(g,n)$ is
\emph{small} if either $n\leq 1$ or $n=2$ and $g=0$.  Otherwise we say
that $(g,n)$ is \emph{large}.

\begin{theorem}
  \label{theorem:algebraic-classification}
Suppose $H$ is nontrivial, i.e., $\sgn$ is neither a sphere nor
disk. Then the following hold:
  \begin{enumerate}
  \item $\Psi$ is an isomorphism of $Q(H)^{sym}/\ANH$
    onto~$(\Z/2)^\infty$.
  \item If $(g,n)$ is small, then $Q(H)^\times/Q(H)^{sym}=0$.  If
    $(g,n)$ is large, then $\Theta$ is an isomorphism of
    $Q(H)^\times/Q(H)^{sym}$ onto~$\Z^\infty$.
  \end{enumerate}
Consequently,
  \[
  \frac{Q(H)^\times}{\ANH} \cong
  \begin{cases}
    (\Z/2)^\infty & \text{if $(g,n)$ is small,}\\
    (\Z/2)^\infty\oplus \Z^\infty & \text{if $(g,n)$ is large.}
  \end{cases}
  \]
\end{theorem}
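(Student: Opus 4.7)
The plan is to reduce the theorem to counting equivalence classes of irreducible elements of $\zh$ via Proposition~\ref{prop:complete}, which already identifies $Q(H)^{sym}/\ANH$ with $\bigoplus_{[\lambda]}\Z/2$, indexed by equivalence classes of self-dual irreducibles $\lambda$, and $Q(H)^\times/Q(H)^{sym}$ with $\bigoplus_{\{[\mu],[\bar\mu]\}}\Z$, indexed by unordered pairs $\{[\mu],[\bar\mu]\}$ where $\mu$ is a non-self-dual irreducible. Thus the theorem reduces to three claims: \emph{(i)} if $H$ is nontrivial, there are infinitely many equivalence classes of self-dual irreducibles; \emph{(ii)} for small $(g,n)$, every irreducible is self-dual; and \emph{(iii)} for large $(g,n)$, there are infinitely many pairs $\{[\mu],[\bar\mu]\}$ with $\mu$ non-self-dual.

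For \emph{(i)}, I would fix a nontrivial $h\in H$ and, for each prime $p$, take the cyclotomic-type element $\Phi_p(h) = 1+h+h^2+\cdots+h^{p-1}\in\zh$. Each $\Phi_p(h)$ is irreducible (by restricting to the cyclic subgroup $\langle h\rangle$ and invoking the classical irreducibility of $\Phi_p$ over $\Z$) and self-dual, since $\Phi_p(h^{-1})\doteq \Phi_p(h)$. Because $\Aut^*(H)$ permutes group elements inside the group ring, the number of terms and the multiset of nonzero coefficients of a polynomial are invariants of $\sim$, so distinct primes yield inequivalent classes.

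For \emph{(ii)}, when $n\le 1$ we have $H_\partial=0$ and the matrix description after Proposition~\ref{prop:inauthdot} identifies $\Aut^*(H)$ with the symplectic group $\mathrm{Sp}(2g,\Z)$. The involution $-\mathrm{id}$ lies in this group and induces the bar involution on $\zh$, giving $\bar\lambda\sim\lambda$ for every irreducible $\lambda$. The case $(g,n)=(0,2)$ is more delicate because the matrix description forces $\Aut^*(H)=\{\mathrm{id}\}$; it must be handled by a separate argument exploiting the special structure of the annulus, where the relevant one-variable Laurent polynomials acquire symmetry from properties of Alexander polynomials modulo the appropriate indeterminacies.

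For \emph{(iii)}, when $(g,n)$ is large we have $\rank H_\partial = n-1\ge 1$ (since $n\ge 2$), so fix a nontrivial $h_1\in H_\partial$; by the matrix description, every $\varphi\in\Aut^*(H)$ satisfies $\varphi(h_1)=h_1$. For each integer $a$ with $|a|\ge 2$, the linear element $\mu_a := h_1 + a\in\zh$ is irreducible, and a direct support-and-coefficient comparison in the group ring shows $\mu_a\not\sim\bar\mu_a$ (equivalence would force $\varphi(h_1)=h_1^{-1}$, contradicting fixity on $H_\partial$) and $\mu_a\not\sim\mu_b^{\pm 1},\bar\mu_b^{\pm 1}$ whenever $|a|\neq|b|$, yielding infinitely many pairs $\{[\mu_a],[\bar\mu_a]\}$. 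The main obstacle will be executing this support-matching bookkeeping in \emph{(iii)} carefully, together with handling the isolated small case $(g,n)=(0,2)$ in \emph{(ii)}, whose trivial $\Aut^*(H)$ forces a qualitatively different argument.
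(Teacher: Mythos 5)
Your overall plan---reducing to counting equivalence classes of irreducibles via Proposition~\ref{prop:complete}, then exhibiting those irreducibles directly---is a genuinely different route from the paper's. The paper defers parts (1) and the large case of (2) to Theorems~\ref{theorem:realization} and~\ref{theorem:realization-Z}, which manufacture explicit homology cylinders and use their torsion invariants as the witnessing irreducibles; you bypass the topology and write down $\Phi_p(h)=1+h+\cdots+h^{p-1}$ (self-dual) and $h_1+a$ (non-self-dual, $h_1\in H_\partial$) inside $\Z[H]$ directly. Both approaches work for those parts; the paper's buys the realization statements, which it needs for Theorems~\ref{mainthm} and~\ref{mainthm2} anyway, whereas yours is more elementary if one only wants the abstract group computation. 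Your observation for $n\le 1$ that $-\mathrm{id}\in\mathrm{Sp}(2g,\Z)\subset\Aut^*(H)$ induces the bar involution is exactly the paper's argument.

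The problem is your handling of (2) for $(g,n)=(0,2)$, and it is not a gap one can fill. The assertion $Q(H)^\times/Q(H)^{sym}=0$ is a statement about \emph{all} of $Q(H)^\times$, not merely about torsion invariants of homology cylinders, so appealing to symmetry "from properties of Alexander polynomials" cannot help. In fact the claim is false: take $H=\langle t\rangle\cong\Z$ and $\mu=t^2-t-1\in\Z[t^{\pm1}]$. This is irreducible, $\bar\mu\doteq t^2+t-1$ is a distinct irreducible, and since $\Aut^*(H)=\{\mathrm{id}\}$ (as you note), the relation $\sim$ collapses to $\doteq$; hence $\mu$ is non-self-dual, and by Proposition~\ref{prop:complete}(2) the map $\Theta_\mu$ is a nonzero surjection $Q(H)^\times/Q(H)^{sym}\to\Z$. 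To be fair, the paper's own one-line justification for this case---citing Section~\ref{subsection:symmetry-asymmetry} as showing "there is no non-self-dual $\mu$ if $(g,n)$ is small"---overreaches what that section proves: Lemma~\ref{lemma:symmetry-up-to-aut*} covers only $n\le 1$, and the unnumbered lemma about annuli concerns torsions of homology cylinders rather than arbitrary elements of $Q(H)^\times$. So this is a defect in the theorem as stated, not only in your write-up; it does not propagate to Theorems~\ref{mainthm} and~\ref{mainthm2}, whose $(0,2)$ case is handled separately through Levine's work.
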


\begin{proof}
  Recall that in Section~\ref{subsection:symmetry-asymmetry} we
  observed that there is no non-self-dual $\mu$ if $(g,n)$ is small.
  The first sentence of Theorem~\ref{theorem:algebraic-classification}
  (2) is an immediate consequence.  In the following
  subsections, we will realize, as the values of torsion invariants of
  homology cylinders, infinitely many self-dual classes $[\lambda]$
  when $H$ is nontrivial (see Theorem~\ref{theorem:realization}), and
  infinitely many non-self-dual classes $[\mu]$ when $(g,n)$ is large
  (see Theorem~\ref{theorem:realization-Z}).
\end{proof}

\subsection{Proofs of Theorems~\ref{mainthm} and \ref{mainthm2}}
\label{subsection:proofs}

In Sections~\ref{subsection:proofs} and
\ref{subsection:invariants-under-Aut*(H)}, we give proofs of
Theorems~\ref{mainthm} and \ref{mainthm2}. Along the way we also
conclude the proof of Theorem \ref{theorem:algebraic-classification}.

\begin{theorem}\label{theorem:realization}
  If $b_1(\sgn)>0$, then there exists a subgroup
  $\SS\subset \hgn$ isomorphic to $(\Z/2)^{\infty}$
  such that
  \[
  \SS\to \hgn \xrightarrow{\tau} Q(H)^\times/\ANH \xrightarrow{\Psi}
  \bigoplus_{[\lambda]} \Z/2
  \]
   is an injection whose
  image is the sum of a certain infinite set of $\Z/2$ summands of
  $\bigoplus_{[\lambda]} \Z/2$.
\end{theorem}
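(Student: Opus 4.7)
The plan is to realize elements of $\bigoplus_{[\lambda]}\Z/2$ via the knot-tying homomorphism $\CC \to \hgn$ from Proposition~\ref{prop:embedc}, and then to assemble infinitely many lifts into a subgroup isomorphic to $(\Z/2)^\infty$. Since $b_1(\sgn)>0$, I first choose an embedding $\iota\colon X\to \inte(\Sigma)$ so that the element $h := \iota_*(\text{generator of }H_1(X)) \in H$ is nontrivial and primitive. When $n\ge 2$ I would take $h \in H_\partial$, so that $h$ is fixed by every element of $\Aut^*(H)$; otherwise $g>0$ and I would take $h$ primitive in $\widehat H$. The resulting embedding $f\colon E_0\to \inte(\Sigma\times[0,1])$ gives the homomorphism $\CC\to\hgn$, $K\mapsto[M(f,K)]$, with torsion $\tau([M(f,K)])\doteq\Delta_K(h)$.

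The second step is to exhibit an infinite family $\{K_i\}_{i\in \N}$ of knots in $S^3$ such that each $K_i$ has order dividing $2$ in $\CC$ (so $K_i\#K_i$ is slice) and the Alexander polynomials $\Delta_{K_i}(t)\in \Z[t^{\pm 1}]$ are pairwise distinct, suitably normalized, self-dual irreducibles. Such a family can be obtained from explicit $2$-bridge or Casson-Gordon style constructions yielding infinitely many independent $2$-torsion classes in $\CC$ with prescribed irreducible self-dual Alexander polynomials. I would then check that the specializations $\lambda_i := \Delta_{K_i}(h) \in \Z[H]$ remain irreducible and pairwise $\Aut^*(H)$-inequivalent in $\Z[H]$: when $h\in H_\partial$ this is immediate since $\Aut^*(H)$ fixes $h$; when $h$ lies in $\widehat H$, a standard calculation with the $\mbox{Sp}(2g,\Z)$-action on primitive vectors reduces the check to inequivalence in $\Z[t^{\pm 1}]$ modulo $\pm t^a$.

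Setting $M_i := M(f, K_i)$, Proposition~\ref{prop:complete}(1) then gives $\Psi_{\lambda_i}(\tau(M_i))\neq 0$ and $\Psi_{\lambda_j}(\tau(M_i))=0$ for $j\neq i$, so the images $\Psi(\tau(M_i))$ are the generators of pairwise distinct $\Z/2$-summands of $\bigoplus_{[\lambda]}\Z/2$. Since $\CC$ is abelian, the $[M_i]$ pairwise commute in $\hgn$, and since $K_i\#K_i$ is slice we have $[M_i]^2=1$ in $\hgn$. Hence the assignment $e_i\mapsto[M_i]$ extends to a group homomorphism $s\colon (\Z/2)^\infty \to \hgn$; the composition $\Psi\circ\tau\circ s$ sends $e_i$ to the generator of the $[\lambda_i]$-summand, and so is injective. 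Therefore $s$ itself is injective, and $\SS:=\Im(s)\cong(\Z/2)^\infty$ is the desired subgroup, mapping onto the subsum of $\bigoplus_{[\lambda]}\Z/2$ indexed by the infinite set $\{[\lambda_i]\}_{i\in\N}$.

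The main obstacle is the second step: producing the infinite family $\{K_i\}$ of order-$2$ concordance classes with mutually inequivalent self-dual irreducible Alexander polynomials. This is the essential geometric input and relies on nontrivial results from classical knot concordance theory, whereas the rest of the plan is a formal consequence of Propositions~\ref{prop:embedc} and~\ref{prop:complete} together with the abelianness of $\CC$.
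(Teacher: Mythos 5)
Your proposal is correct and is essentially the paper's proof: both tie negative-amphicheiral knots (equivalently, order-two concordance classes with irreducible self-dual Alexander polynomials) into the product cylinder via Proposition~\ref{prop:embedc} along a primitive $h\in H$, so that $\tau(M_i)\doteq\Delta_{K_i}(h)$, and then detect the classes $[M_i]$ through the homomorphism $\Psi$ of Proposition~\ref{prop:complete}. The only place you diverge is in checking that the $\Delta_{K_i}(h)$ lie in distinct $\Aut^*(H)$-orbits: you reduce this to inequivalence in $\Z[t^{\pm1}]$ via a support/$\mbox{Sp}(2g,\Z)$ argument, whereas the paper chooses an explicit family (Cha's knots with $\Delta(t)=a^2t^2-(2a^2+1)t+a^2$) whose coefficient multisets are distinct and invokes the multiset invariant formalized in Lemma~\ref{lemma:aut-invariant} --- both work, but the latter is the more systematic tool that the paper reuses in the $\Z$-valued case.
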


\begin{proof}
We pick non-trivial knots $K_i$ ($i=1,2\ldots$) which are negative
amphicheiral with irreducible Alexander polynomials such that the
multisets $C_i$ of nonzero coefficients of
$\Delta_i(t):=\Delta_{K_i}(t)$ are mutually distinct up to sign. For
example, one can use the family of knots described in
\cite[p.~60]{Ch07}: their Alexander polynomials are of the form
$a^2t^2-(2a^2+1)t+a^2$. It is well-known that the knots $K_i$ form a
$(\Z/2)$-basis of a subgroup of the knot concordance group
isomorphic to $(\Z/2)^{\infty}$.

We write $M=\S\times [0,1]$. Let $f\colon E_0\to \inte(M)$ be an
embedding as in Proposition~\ref{prop:embedc},  which is
homologically essential. Denote by $h$ the image of the generator of
$H_1(E_0)\cong \Z$ under the homomorphism $H_1(E_0) \to H$ induced
by the given embedding $f\colon E_0 \to M$.

We now write $M_i=M(f,K_i)$. By Proposition \ref{prop:embedc}, the
$M_i$ span a subgroup of $\hgn$ which is the homomorphic image of
$(\Z/2)^{\infty}$. Furthermore we have $\tau(M_i)=\Delta_i(h)$. Note
that each $\Delta_i(h)$ is irreducible and self-dual since
$\Delta_i(t)$ is irreducible and self-dual and since $h$ is easily
seen to be an indivisible element in $H$. It is not difficult to see
directly that the  multiset $C_i$ is an invariant of $\Delta_i(h)$
under the equivalence relation~$\sim$.  (For a more general method
from which this observation is derived as a special case, see
Section~\ref{subsection:invariants-under-Aut*(H)}.)  Therefore,
since the $C_i$ are all distinct, the equivalence classes of the
$\Delta_i(h)$ are mutually distinct.  From this we obtain
\[
  \Psi_{\Delta_i(h)}(\tau(M_j)) =
  \begin{cases}
    1 & \text{ if } i=j \\
    0 & \text{ otherwise.}
  \end{cases}
\]
Also, $\Psi_\lambda(\tau(M_i))=0$ if $\lambda \not\sim \Delta_i(h)$.
Therefore the composition $\SS \to \bigoplus_{[\lambda]} \Z/2$ in
the statement of this theorem is injective and has image
$\bigoplus_{[\Delta_i(h)]} \Z/2 \cong (\Z/2)^\infty$.
\end{proof}

We now obtain Theorem \ref{mainthm} as an immediate corollary.

\begin{proof}[Proof of Theorem \ref{mainthm}]
  By Theorem \ref{theorem:realization}, we have a subgroup $\SS$ of
  $\hgn$ and a homomorphism $\hgn \to (\Z/2)^\infty$ whose restriction
  to $\SS$ is an isomorphism.  It follows that the homomorphism
  splits, and $\SS\cong (\Z/2)^\infty$ descends to  a summand of the
  abelianization of~$\hgn$.
\end{proof}

\begin{remark}\label{remark:proof of mainthm}
  \leavevmode\Nopagebreak
  \begin{enumerate}
  \item Using the full power of
    Theorem~\ref{theorem:string-link-sum-formula}, i.e., by tying in
    string links with several components, we can realize many more
    nontrivial values of the homomorphism~$\Psi$.
  \item Following the arguments in the proof of Theorem~\ref{mainthm},
    one can easily show that if $b_1(\S)>0$ then there exists a
    commutative diagram
    \[
    \xymatrix{ (\Z/2)^{\infty}\ar[d]\ar[r]^{\id} & (\Z/2)^{\infty} \\
     \hgnsmooth \ar[r] & \hgn^{\top} \ar[u] }
    \]
    such that the left hand map  is injective and the right hand map is surjective.
  \end{enumerate}
\end{remark}

We also have the following realization result:

\begin{theorem}
  \label{theorem:realization-Z}
  If $(g,n)$ is large, then the image of
  \[
  \hgn\xrightarrow{\tau} Q(H)^\times/\ANH \xrightarrow{\Theta}
  \bigoplus_{\{[\mu],[\bar\mu]\}} \Z
  \]
  contains infinitely many summands of
  $\bigoplus_{\{[\mu],[\bar\mu]\}}\Z$.
\end{theorem}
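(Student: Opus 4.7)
My plan is to produce, for each large $(g,n)$, an infinite family of homology cylinders whose torsion invariants realize pairwise distinct non-self-dual irreducible factors in $\zh$, and then apply the surjections $\Theta_{\mu}$ of Proposition~\ref{prop:complete}(2). The basic case is $(g,n)=(g,2)$ with $g\geq 1$, where I take the family $\{M(a)\}_{a\geq 1}$ from Section~\ref{subsection:exmaple-handle-decomp}, with torsions
\[
p_a := \tau(M(a))\doteq 1+(y-1)x+y(y-1)x^{2}+\cdots+y^{a-1}(y-1)x^{a},
\]
where $x$ generates $H_\partial$ and $y=y_{1}$ is a symplectic generator of $\widehat H$.

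The key structural input is that every $\varphi\in\Aut^{*}(H)$ fixes $H_\partial$ pointwise, so the $x$-width of the Newton polytope of a Laurent polynomial is a $\sim$-invariant; conjugation $p\mapsto \bar p$ also preserves this width. Since the $x$-width of $p_a$ is exactly $a$, the classes $[p_a]$ for distinct $a$, and likewise the pairs $\{[p_a],[\overline{p_b}]\}$ for $a\neq b$, are pairwise distinct. For non-self-duality of $p_a$ itself, I would observe that after normalizing $\bar p_a$ by the unit $y^a x^a$ to have $x$-support $\{0,\ldots,a\}$, the resulting coefficient of $x^0$ is $1-y$, which is not a unit in $\Z[\widehat H]$, while the corresponding coefficient of $p_a$ is the unit~$1$; being a unit is an $\Aut^{*}(H)$-invariant property, so $p_a\not\sim\overline{p_a}$. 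Combined with a $\zh$-irreducibility check for $p_a$ (applying Gauss's lemma in $\Z[y^{\pm 1}][x]$ after noting $p_a$ is $x$-primitive with constant term~$1$; if $p_a$ should factor, select instead an irreducible factor realizing the full $x$-width $a$), this yields infinitely many pairwise distinct non-self-dual classes $[\mu_a]$, and evaluation of $\Theta_{\mu_a}$ produces the desired $\Z^\infty$ in the image.

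To reach an arbitrary large $(g,n)$, I would bootstrap via the expansion operation of Section~\ref{section:homcyl}. For $g\geq 1$ and $n>2$, glue $M(a)$ over $\Sigma_{g,2}$ to the product homology cylinder over $\Sigma_{0,n}$ along one boundary component; Proposition~\ref{prop:tauunion} then shows that the resulting torsion is the image of $p_a$ under the injective inclusion $H_{1}(\Sigma_{g,2})\hookrightarrow H_{1}(\Sigma_{g,n})$, preserving all invariants above. For $(0,n)$ with $n\geq 3$, I would carry out the handle-decomposition construction of Section~\ref{subsection:exmaple-handle-decomp} directly on $\Sigma_{0,3}$ (whose $H_1$ has rank $2$) to obtain an analogous asymmetric family, and then glue to a product cylinder over $\Sigma_{0,n-1}$ to extend to larger~$n$. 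The main obstacle is the irreducibility analysis together with verifying that the $x$-width and unit-coefficient invariants genuinely separate the classes under $\Aut^{*}(H)$; should these coarser invariants fail in edge cases, one can refine them using the $\Aut(\widehat H)$-orbit of the extremal $x$-coefficients, which is also preserved under~$\sim$.
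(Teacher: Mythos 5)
Your strategy of using the family $M(a)$ and detecting their torsion polynomials $p_a$ via $\Theta_{\mu}$ follows the paper's approach, and the expansion/bootstrapping ideas for passing to general large $(g,n)$ are reasonable. However, the central invariant you propose—the $x$-width of the Newton polytope—is not actually a $\sim$-invariant, and this is a genuine gap. The statement ``every $\varphi\in\Aut^{*}(H)$ fixes $H_\partial$ pointwise, so the $x$-width is a $\sim$-invariant'' is a non sequitur: in the matrix form $\begin{bmatrix}\id_{n-1}&*\\0&P_0\end{bmatrix}$ recalled in Section~\ref{section:actionh1}, the upper-right block $*$ may be nonzero, so a generator $y$ of $\widehat H$ can be sent to $y'x^{k}$ with $k\neq 0$. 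Under such a substitution, a monomial $x^iy^j$ becomes $x^{i+kj}y'^{j}$, so the $x$-degree of a term depends on the $\widehat H$-exponent as well. Concretely, applying $x\mapsto x$, $y\mapsto yx^k$ to $p_a$ sends its top term $y^ax^a$ to $y^ax^{a(k+1)}$ while fixing the constant term $1$, so the $x$-width changes from $a$ to $a(k+1)$. For the same reason, ``the coefficient of $x^0$ after normalization'' is not well-defined up to $\sim$, so the unit-versus-nonunit comparison you use for non-self-duality also does not survive the $\Aut^*(H)$-action; and your fallback (orbit of the extremal $x$-coefficients) inherits the same problem because the notion of ``extremal in $x$'' is not $\Aut^*(H)$-equivariant.

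The paper's proof avoids exactly this pitfall by using the multiset invariant $C(p)=\{[u_g]\}$ of Lemma~\ref{lemma:aut-invariant}, where $p=\sum_{g\in\widehat H}u_g\cdot g$ and $[u_g]$ denotes the class of $u_g\in\Z[H_\partial]$ up to multiplication by $H_\partial$. The key point is that $\varphi\in\Aut^*(H)$ sends $g\in\widehat H$ to $g'm_g$ with $g'\in\widehat H$ and $m_g\in H_\partial$, and $g\mapsto g'$ is a bijection, so $\varphi$ merely permutes the classes $[u_g]$; this makes $C(p)$ genuinely $\sim$-invariant (up to sign). For $p_a$ one computes $C(p_a)=\{[1-x],\dots,[1-x],[1]\}$ with $a$ copies of $[1-x]$, which both distinguishes $p_a$ from $p_b$ for $a\neq b$ (via $|C(p_a)|=a+1$) and shows $p_a\not\sim\bar p_a$ (the element $[1]$ is a unit class appearing with a definite sign, while $\bar p_a$ yields $\{[x-1],\dots,[x-1],[1]\}$ and $1-x\not\approx x-1$). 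If you replace your Newton-polytope width argument with this multiset invariant, the rest of your outline (irreducibility via Eisenstein applied to the reciprocal polynomial, and the expansion argument for larger $n$) goes through.
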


The proof of Theorem \ref{theorem:realization-Z} requires a more
sophisticated method to detect non-equivalence of non-self-dual
irreducible factors.  This will occupy all of
Section~\ref{subsection:invariants-under-Aut*(H)}.  Assuming Theorem
\ref{theorem:realization-Z} we can now finally prove
Theorem~\ref{mainthm2}:

\begin{proof}[Proof of Theorem~\ref{mainthm2}]
Suppose that $n>1$.
    By Theorem~\ref{mainthm2-special}, we may assume that $(g,n)$ is large.
    Let $\hgn^{ab}$ be the abelianization
    of~$\hgn$. By Theorem~\ref{theorem:realization-Z}, we
  have a surjection $\hgn \to \Z^\infty$.  This induces a split
  surjection, say $g$, of the abelianization $\hgn^{ab}$ of $\hgn$
  onto $\Z^\infty$.  Also, by Theorem~\ref{theorem:realization}, we
  have a split surjection $f\colon \hgn^{ab} \to (\Z/2)^\infty$.
  Since the intersection of the images of the right inverses of
  $f$ and $g$ is automatically $\{0\}$ (e.g., compare the order), it
  follows that $(f,g)\colon \hgn^{ab} \to (\Z/2)^\infty \oplus
  \Z^\infty$ is a split surjection.
\end{proof}

\subsection{Detecting non-equivalent non-self-dual factors}
\label{subsection:invariants-under-Aut*(H)}

In order to prove Theorem \ref{theorem:realization-Z} we first will
introduce a simple and practical method for distinguishing  elements in
$\Z[H]$ up to the action of $\Aut^*(H)$.

As before, we denote $H=H_1(\Sigma)$ and $H_\partial=\Im\{H_1(\partial
\S) \to H\}$.  Denote $\widehat H=H/H_\partial=H_1(\widehat\Sigma)$
where $\widehat\Sigma$ is $\Sigma$ with boundary circles capped off,
and write $H=H_\partial \times \widehat H$ by choosing a splitting.
Fix a basis $\{x_1,\ldots,x_{n-1}\}$ of $H_\partial$, so that each
$u\in \Z[H_\partial]$ is viewed as a (Laurent) polynomial in the
variables~$x_i$.  For $u,v\in \Z[H_\partial]$, we write $u\approx v$
if $u=v\cdot m$ for some $m\in H_\partial$.  This is an equivalence
relation; denote the equivalence class of $u\in \Z[H_\partial]$
by~$[u]$.  Since $u\approx v$ if and only if the polynomial $u$ is
obtained from $v$ by shifting the exponents, it is very
straightforward to check whether $u\approx v$ or not for two given
polynomials $u$ and~$v$.  Given $p\in \zh$, write $p=\sum_{g\in
  \widehat H} u_g \cdot g$ where $u_g\in \Z[H_\partial]$, and define
\[
C(p) = \{[u_g] \mid g\in \widehat H\}.
\]
We view $C(p)$ as a multiset, i.e., repeated elements are allowed.
For $C(p)=\{[u_g]\}$, denote $-C(p) = \{[-u_g]\}$.

\begin{lemma}
  \label{lemma:aut-invariant}
  $C(p)$ is invariant up to sign under $\sim$ on $\Z[H]$, i.e., if
  $p\sim q$, $C(p)$ is equal to either $C(q)$ or $-C(q)$.
\end{lemma}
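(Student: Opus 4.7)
The plan is to check the invariance of $C(p)$ separately under the three elementary operations that generate the equivalence relation $\sim$: (i) negation, (ii) multiplication by an element $h\in H$, and (iii) application of an automorphism $\varphi\in \Aut^*(H)$. Since $p\sim q$ means $p\doteq \varphi(q)$ for some $\varphi\in\Aut^*(H)$, i.e., $p=\pm h\cdot\varphi(q)$ for some $h\in H$ and $\varphi\in\Aut^*(H)$, it suffices to handle these three cases.

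First, negation is immediate from the definition: if $p=\sum_g u_g\cdot g$, then $-p=\sum_g (-u_g)\cdot g$, so $C(-p)=-C(p)$.

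Next, multiplication by $h\in H$. Write $h=h_\partial\cdot h_{\widehat H}$ using the splitting $H=H_\partial\times \widehat H$. Then
\[
h\cdot p \;=\; \sum_g (u_g\cdot h_\partial)\cdot (g\cdot h_{\widehat H}) \;=\; \sum_{g'} (u_{g'\cdot h_{\widehat H}^{-1}}\cdot h_\partial)\cdot g'
\]
after reindexing by $g'=g\cdot h_{\widehat H}$. The coefficient in $\Z[H_\partial]$ at $g'$ is $u_{g'\cdot h_{\widehat H}^{-1}}\cdot h_\partial$, which lies in the same $\approx$-class as $u_{g'\cdot h_{\widehat H}^{-1}}$ (since they differ by multiplication by $h_\partial\in H_\partial$). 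Hence $C(h\cdot p)=C(p)$ as multisets.

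Finally, the action of $\varphi\in\Aut^*(H)$. This is the main step, and it relies on the explicit matrix form recalled after Proposition~\ref{prop:inauthdot}: with respect to a basis of $H_\partial$ together with a symplectic basis of $\widehat H$, $\varphi$ corresponds to $\bigl[\begin{smallmatrix}\id_{n-1} & * \\ 0 & P_0\end{smallmatrix}\bigr]$ with $P_0\in \mathrm{Sp}(2g,\Z)$. Translated back, this says $\varphi$ fixes $H_\partial$ pointwise and sends a general element $g\in \widehat H$ to $f(g)\cdot P_0(g)$ for some homomorphism $f\colon \widehat H\to H_\partial$. Therefore
\[
\varphi(p) \;=\; \sum_g u_g\cdot f(g)\cdot P_0(g) \;=\; \sum_{g'} u_{P_0^{-1}(g')}\cdot f(P_0^{-1}(g'))\cdot g'
\]
after reindexing by $g'=P_0(g)$, which is a bijection on $\widehat H$. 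The coefficient at $g'$ is $u_{P_0^{-1}(g')}\cdot f(P_0^{-1}(g'))$, which is $\approx$-equivalent to $u_{P_0^{-1}(g')}$ since $f(P_0^{-1}(g'))\in H_\partial$. Thus $C(\varphi(p))=C(p)$.

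Combining the three cases yields $C(\pm h\cdot \varphi(q))=\pm C(q)$, proving the lemma. I do not anticipate any real obstacle; the only thing to be careful about is the reindexing and the explicit use of the structure of $\Aut^*(H)$ in Step~(iii), which is precisely the point where the hypothesis that $\varphi$ fixes $H_\partial$ is used.
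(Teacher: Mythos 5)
Your proof is correct and follows the same approach as the paper: both exploit the structure of $\Aut^*(H)$ (it fixes $H_\partial$ pointwise and sends $g\in\widehat H$ to $m_g\cdot g'$ with $m_g\in H_\partial$ and $g\mapsto g'$ a bijection), so that the coefficients $u_g$ are permuted up to $\approx$, hence $C(\varphi(p))=C(p)$; and both dispose of the $\pm h$ factor separately. The only difference is that you spell out the reindexing and the splitting $h=h_\partial h_{\widehat H}$ explicitly, whereas the paper compresses the $\pm h$ case into a single sentence.
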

\begin{proof}
  Note that $\varphi\in \Aut^*(H)$ fixes $H_\partial$ and sends $g\in
  \widehat H$ to an element of the form $g'm_g$ for some $g'\in
  \widehat H$, $m_g\in H_\partial$.  In addition the association
  $g\mapsto g'$ is a bijection since $\varphi$ is an isomorphism.
  Therefore, for $p=\sum_{g\in \widehat H} u_g \cdot g \in \Z[H]$, the
  classes $[u_g]$ are permuted by the action of~$\varphi$.  It follows
  $C(p)=C(\varphi(p))$.  It is easily seen that $C(\pm p\cdot h)=\pm
  C(p)$ for $h\in H$.
\end{proof}

\begin{example}
  \label{example:total-coefficients}
  In the proof of Theorem~\ref{theorem:realization}, we have observed
  that for $p=\sum_{h\in H} c_h \cdot h\in \Z[H]$, the multiset of all
  nonzero coefficients $\{c_h\mid h\in H$ and $c_h\ne 0\}$ is an
  invariant, up to sign, of $p$ under $\sim$.  This can be viewed as a
  consequence of Lemma~\ref{lemma:aut-invariant}, since the multiset
  of nonzero coefficients of an element $u_g\in \Z[H_\partial]$ is
  invariant under~$\approx$.
\end{example}

We remark that if $C(p)=\{[u_g]\}$, then $C(\bar p) = \{[\bar u_g]\}$.
This combined with Lemma~\ref{lemma:aut-invariant} often allows us to
detect non-self-dual elements, as illustrated below.

\begin{example}
  \label{example:non-self-dual-element}
  Fix $g\ne e\in \widehat H$.  For a positive integer $a$, let
  \[
  p_a = 1+(g-1)x_i+g(g-1)x_i^2+\cdots+g^{a-1}(g-1)x_i^a.
  \]
  Then
  \begin{align*}
    C(p_a)&=\{[1-x_i], \ldots, [x_i^{a-1}-x_i^a], [x_i^a]\}
    = \{[1-x_i], \ldots, [1-x_i], [1]\},\\
    C(\bar p_a)&=\{[x_i-1], \ldots, [x_i-1], [1]\}.
  \end{align*}
  Looking at the element $[1]$ we see that $C(p_a)\ne -C(p_a)$.  Since $1-x_i
  \not\approx x_i-1$ we deduce that $C(p_a) \ne C(\bar p_a)$.  Therefore $p_a\not\sim
  \bar p_a$, i.e., $p_a$ is non-self-dual.  In particular, the torsion
  of the homology cylinder $M(a)$ in
  Section~\ref{subsection:exmaple-handle-decomp} is non-self-dual.
  Also, since $|C(p_a)| =|C(\bar p_a)|= a+1$, we have $p_a \not\sim
  p_b \not\sim \bar p_a$ whenever $a\ne b$.
\end{example}

We are now finally in a position to prove Theorem \ref{theorem:realization-Z}.

\begin{proof}[Proof of Theorem \ref{theorem:realization-Z}]
    First we consider the case
  $n=2$ and $g>0$.  For each positive integer $a$, consider the
  homology cylinder $M(a)$ constructed in
  Section~\ref{subsection:exmaple-handle-decomp}.  Let $p_a =
  \tau(M(a))\in \Z[H]$. As we observed in
  Example~\ref{example:non-self-dual-element}, $p_a$ is non-self-dual,
  and $p_a \not\sim p_b \not\sim \bar p_a$ whenever $a\ne b$.
  Applying the Eisenstein criterion to $p_a$, it can be seen that
  $p_a$ is irreducible.  Therefore
  \[
  \Theta_{p_a}(\tau(M(b))) =
  \begin{cases}
    1 &\text{if } a=b, \\
    0 &\text{otherwise}.
  \end{cases}
  \]
  From this it follows that the image of the subgroup generated by the
  classes of the $M(a)$ under the homomorphism
  \[
  \hgn\xrightarrow{\tau} Q(H)^\times/\ANH
  \xrightarrow{\Theta}\bigoplus_{\{[\mu],[\bar\mu]\}}\Z
  \]
  is equal to $\bigoplus_{\{[p_a],[\bar p_a]\}} \Z \cong \Z^\infty$.
  This proves the theorem for this case.

  For $n\ge 3$, the theorem is proved by a similar construction of
  homology cylinders, using the generator of $H$ associated to a third
  boundary component of $\Sigma$ in place of the generator~$y_1$.  (In
  this case the torsion $p_a$ lies in $\Z[H_\partial]$ and thus it is
  easier to see that $p_a \not\sim p_b \not\sim \bar p_a$ whenever
  $a\ne b$.)
\end{proof}

\section{Pretzel links}\label{section:pretzel}

In this section we study homology cylinders arising from pretzel
links.  The pretzel link \(P(2r,2s,2t)\) is a $3$--component link with
an obvious Seifert surface \(\Sigma\), as shown in
Figure~\ref{fig:Pretzel}.  We pick $\S$ as a model surface
for~$\S_{0,3}$.  We then write $M(r,s,t)=(S^3$ cut along
$\Sigma,i_+,i_-)$.

\begin{figure}[ht]
  \begin{center}
    \labellist
    \normalsize\hair 2pt
    \pinlabel {$a$} at 68 140
    \pinlabel {$b$} at 178 140
    \pinlabel {$r$} at 28 98
    \pinlabel {$s$} at 130 98
    \pinlabel {$t$} at 204 98
    \pinlabel {$\Sigma$} at 115 14
    \endlabellist
    \includegraphics[scale=0.8]{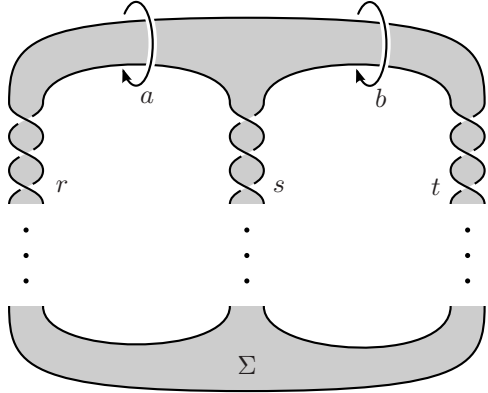}
  \end{center}
  \caption{The pretzel link $P(2r,2s,2t)$ with $r$, $s$, and $t$ full
    twists.}
  \label{fig:Pretzel}
\end{figure}

The main result of this section is the following.

\begin{proposition} \label{prop:pretzels}
  \leavevmode\Nopagebreak
  \begin{enumerate}
  \item $M(r,s,t)$ defines a homology cylinder if and only if
    $(r+s)(t+s)-s^2=\pm 1$.
  \item The set of homology cylinders $M(r,s,t)$ generates a
    $\Z^\infty$ subsummand of the abelianization of~$\HH_{0,3}$.
  \end{enumerate}
\end{proposition}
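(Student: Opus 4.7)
My plan has three pieces.

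\textbf{Part (1).} Taking $a, b$ as the two simple closed curves shown in Figure \ref{fig:Pretzel} (each passing through the middle twist region and one outer region), I would compute linking numbers to obtain the Seifert form
\[
V = \begin{pmatrix} r+s & s \\ s & s+t \end{pmatrix}.
\]
By the standard Mayer--Vietoris / Alexander-duality argument applied to $S^3 = M \cup (\Sigma \times [-1,1])$, the push-off maps $(i_\pm)_* \colon H_1(\Sigma) \to H_1(M)$ are represented (with respect to the dual basis) by $V$ and $V^T$. Hence $M(r,s,t)$ is a homology cylinder iff $V$ is invertible over $\Z$, which is precisely $(r+s)(t+s) - s^2 = \pm 1$.

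\textbf{Part (2), torsion computation.} Writing $H = H_1(\Sigma_{0,3}) \cong \Z^2$ with multiplicative generators $x, y$ (meridians of two of the three boundary circles), I would give a handle decomposition of $(M, \Sigma_+)$ with two $1$--handles dual to the bands of $\Sigma$ and two $2$--handles with attaching curves the $i_-$--pushoffs of $a$ and $b$. Following the Fox-calculus recipe of Section \ref{subsection:exmaple-handle-decomp}, $\tau(M(r,s,t))$ is the determinant of a $2 \times 2$ $\Z[H]$--coefficient boundary matrix whose entries are polynomials in $x, y$ of the same combinatorial shape as $V$ but with each full twist contributing factors such as $1 + x + \cdots + x^{a-1}$ (as in Example \ref{example:non-self-dual-element}), giving an explicit Laurent polynomial $p_{r,s,t}(x,y)$.

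\textbf{Part (2), realization.} I would then select an infinite family of triples $(r_k, s_k, t_k)$ satisfying the homology-cylinder constraint --- for instance by solving $rt + s(r+t) = \pm 1$ in pairs $(r, t)$ with $s$ varied, which admits infinitely many integer solutions. For each $k$ one checks, via the multiset invariant $C(\cdot)$ of Section \ref{subsection:invariants-under-Aut*(H)} (combined with an Eisenstein-type irreducibility criterion on one of the variables, as in the proof of Theorem \ref{theorem:realization-Z}), that $p_{r_k,s_k,t_k}$ has an irreducible non-self-dual factor $\mu_k$ and that $\mu_k \not\sim \mu_j \not\sim \bar\mu_k$ for $k \ne j$. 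Applying the corresponding homomorphisms $\Theta_{\mu_k}$ from Proposition \ref{prop:complete} then produces a surjection of the subgroup of $\HH_{0,3}$ generated by the $[M(r_k, s_k, t_k)]$ onto a sub-summand of $\bigoplus \Z \cong \Z^\infty$. A splitting argument identical to the one used in the proof of Theorem \ref{mainthm2} upgrades this to a $\Z^\infty$ direct summand of the abelianization of $\HH_{0,3}$.

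The main obstacle is the middle step: writing the torsion polynomials $p_{r,s,t}$ explicitly enough that the multiset $C(p_{r_k,s_k,t_k})$ visibly separates distinct $k$'s and detects non-self-duality. The bookkeeping is delicate because the twist-parameters enter into the same $\Z[H]$--monomials via the handle-decomposition computation, so care is needed in choosing the family $(r_k, s_k, t_k)$ so that varying $k$ produces a controlled change in the $C(\cdot)$ invariant.
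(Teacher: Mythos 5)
Your plan follows the same high-level architecture as the paper (compute the torsion of $M(r,s,t)$ via Fox calculus / a relative handle decomposition, find infinitely many non-self-dual irreducible factors, and hit them with the homomorphisms $\Theta_\mu$), and Part (1) is fine — your Seifert-form argument is equivalent to the paper's computation of $\alpha^+ = a^r(ab^{-1})^s$, $\beta^+ = b^{-t}(ab^{-1})^s$ in $\pi_1(M)$ followed by abelianization. But there is a genuine gap precisely where you flag ``the main obstacle.'' You propose to detect and separate irreducible non-self-dual factors via the multiset invariant $C(\cdot)$ together with an Eisenstein criterion, as in the proof of Theorem~\ref{theorem:realization-Z}, but you never show this works for the actual torsion polynomials of pretzel cylinders. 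That is not a minor bookkeeping issue: unlike the custom polynomial $p_a$ in Theorem~\ref{theorem:realization-Z}, the pretzel torsion $\tau(M(r,s,t))$ (displayed as equation~\eqref{comptau} in the paper) is a fairly dense two-variable Laurent polynomial for which Eisenstein does not visibly apply, and it is not even clear that $\tau(M(r,s,t))$ itself is irreducible. The proof has to detect a suitable irreducible \emph{factor} without explicitly factoring.

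The paper's actual mechanism is different from what you sketch and is worth noting. It views $\tau_i = \tau(M(r_i,s_i,t_i))$ inside $\Z[b^{\pm1}][a^{\pm1}]$ and exploits the multiplicativity of the lowest- and highest-degree (in $a$) coefficients $l(\cdot),h(\cdot) \in \Z[b^{\pm1}]$: choosing $t_i = -x_i$ for distinct odd primes $x_i$ (and $r_i,s_i$ adjusted so the homology-cylinder constraint holds) forces $l(\tau_i) = 1+b+\cdots+b^{x_i-1}$, which is the $x_i$-th cyclotomic polynomial and hence irreducible. Multiplicativity of $l$ then hands you an irreducible factor $p_i$ of $\tau_i$ with $l(p_i)=1+b+\cdots+b^{x_i-1}$; the degree mismatch $\deg l(p_i)=x_i-1 > x_i-2 \geq \deg h(p_i)$ gives $p_i\not\doteq \bar p_i$ for free, and the distinctness of the primes gives $p_i \nmid \tau_j$ for $i\neq j$. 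This bypasses both $C(\cdot)$ and Eisenstein entirely. Incidentally, since $\Aut^*(H)=\{\id\}$ for $\Sigma_{0,3}$, the relation $\sim$ collapses to $\doteq$, so invoking $C(\cdot)$ is unnecessary machinery here in any case. Your outline would be complete if you replaced the ``$C(\cdot)$ + Eisenstein'' step with this leading/trailing-coefficient argument (or some equally explicit substitute).
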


The proof of the proposition will require the remainder of this
section.  Given $r$, $s$, $t$, we write $M = S^3$ cut
along~$\Sigma$. Note that $M$ is a handlebody and that $\pi_1(M)$ is
the free group on the generators $a$ and $b$ as shown in
Figure~\ref{fig:Pretzel}.

Let \(\alpha\) be a loop on $\S$ which runs from  down the left-hand
strip, and back up via the middle strip. Similarly, let \(\beta\) be
a loop which runs down the right-hand strip and back up the middle,
so that  \(\pi_1(\Sigma)\) is generated by \(\alpha\) and \(\beta\).
We now denote by $\alpha^+$ and $\beta^+$ the corresponding curves
on $\S^+\subset M$. Then
\begin{equation*}
\alpha^+  = a^{r}(ab^{-1})^s \quad \quad
\beta^+  = b^{-t}(ab^{-1})^{s}.
\end{equation*}
We find that $H_1(\S)\to H_1(M)$ is an isomorphism if and only if
$(r+s)(t+s)-s^2=\pm 1$. This concludes the proof of Proposition
\ref{prop:pretzels} (1).

We will now calculate the torsions for homology cylinders with $(r+s)(t+s)-s^2=1$.
Note that this condition on $r,s,t$ is equivalent to $(r+s)(t+s)=s^2+1$.
In particular  one of $r,s$ or $t$
is necessarily negative.

Using the symmetries of Pretzel links we can without loss of
generality assume that $r,s>0$ and $t<0$.  Note that the condition
$s^2+1=(r+s)(t+s)$ implies that $|t|<s$ and $|t|<r$.

Recall that we can view $H_1(M(r,s,t))$ as the free abelian
multiplicative group with  basis $\{a,b\}$.  Using the isomorphism
$H_1(\S_{0,3})\to H_1(\S^+)\to H_1(M(r,s,t))$ we now identify
$H_1(\S_{0,3})$ with the multiplicative free abelian group generated
by $a$ and $b$.  In particular we will view $\tau(M(r,s,t))$ as an
element in $\zab$.

Evaluating the \(2\times2\) determinant
\[
\begin{bmatrix}
  \partial \alpha^+/\partial a & \partial \beta^+ / \partial a
  \\ \partial \alpha^+ / \partial b &\partial \beta^+/ \partial b
\end{bmatrix}
\]
we obtain from \cite[Proposition~4.2]{FJR09} that $\tau(M(r,s,t))$
equals
\begin{equation}
  \ba{l@{}r@{}l} &&(1+a+\dots+a^{r-1})(1+b+\dots+b^{|t|-1})\\
  +\,&a^r& (1+b+\dots+b^{|t|-1})(1+ab^{-1}+\dots+(ab^{-1})^{s-1})\\
  -\,&ab^{|t|-1}& (1+a+\dots+a^{r-1})(1+ab^{-1}+\dots+(ab^{-1})^{s-1}).\ea
  \tag{$*$}\label{comptau}
\end{equation}
This polynomial is always asymmetric.  In fact the support of the
polynomial $\tau(M(r,s,t))$ is given in Figure \ref{fig:support}.
Here a point $(i,j)$ corresponds to $a^ib^j$.  Furthermore a triangle
($\blacktriangle$) indicates a coefficient $-1$ and a big five-pointed star
($\bigstar$) indicates a coefficient $+1$.

\begin{figure}[ht]
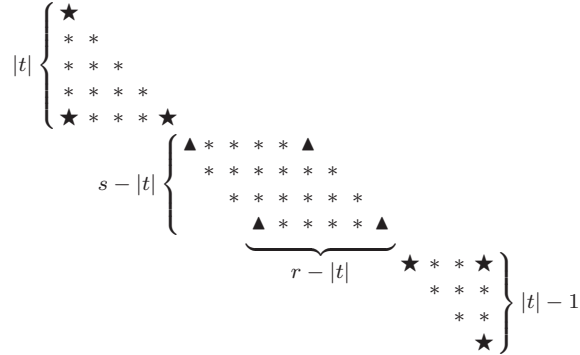

  \begin{center}
    {\footnotesize$
    \arraycolsep=.3em
    \def\bs{\hbox to 0mm{\hss$\bigstar$\hss}}
    \def\tr{\hbox to 0mm{\hss$\blacktriangle$\hss}}
    \begin{array}{r@{}r@{}c}
      |t| \left\{
        \begin{array}{ccccc}
          \bs \\
          * & * \\
          * & * & * & \\
          * & * & * & * & \hphantom{*}\\
          \bs & * & * & * & \bs
        \end{array}
      \right.
      \\
      s-|t| \left\{
        \vphantom{\begin{array}{c} * \\ * \\ * \\ *\end{array}}
      \right. &
      \begin{array}{ccccccccc}
        \tr & * & * & * & * & \tr \\
        & * & * & * & * & * & * \\
        &   & * & * & * & * & * & * & \hphantom{*} \\
        &   &   & \tr & * & * & * & * & \tr
      \end{array}
      \\
      & \begin{array}{@{}r@{}}
        \vbox to 0mm{\vss\hbox{$\underbrace{
              \hphantom{\begin{array}{cccccc}*&*&*&*&*&*\end{array}}}
            _{\displaystyle r-|t|}$}%
        \vss\vss}
        \\ \\ \\ \\ \\
        \end{array}
      & \left.
        \begin{array}{cccc}
          \bs & * & * & \bs \\
          \hphantom{*} & * & * & * \\
          &   & * & * \\
          &   &   & \bs
        \end{array}
      \right\} |t|-1
    \end{array}
    $}
    \caption{Support of $\tau(M(r,s,t))$.}
    \label{fig:support}
  \end{center}
\end{figure}

\begin{proposition}\label{prop:irrfactor}
  There exist infinitely many positive integers $r_i,s_i$, negative
  integers $t_i$, and irreducible polynomials $p_i\in \zab$ with the
  following properties:
  \begin{enumerate}
  \item $s_i^2+1=(r_i+s_i)(t_i+s_i)$,
  \item $p_i\not\doteq \ol{p_i}$, and
  \item $p_i$ divides $\tau(M(r_j,s_j,t_j))$ if and only if $i=j$.
  \end{enumerate}
\end{proposition}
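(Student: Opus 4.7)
My plan begins with an explicit infinite family of triples. Take $(r_i, s_i, t_i) = (i^2 - i + 1,\, i,\, 1 - i)$ for integers $i \geq 2$; the conditions $r_i, s_i > 0$, $t_i < 0$, and $(r_i + s_i)(t_i + s_i) = (i^2 + 1)\cdot 1 = s_i^2 + 1$ are immediate, so by part~(1) of Proposition~\ref{prop:pretzels} each $M_i := M(r_i, s_i, t_i)$ is a homology cylinder over $\Sigma_{0,3}$, with torsion $\tau_i := \tau(M_i) \in \Z[a^{\pm 1}, b^{\pm 1}]$ explicitly given by formula~(\ref{comptau}).

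A useful preliminary remark is that for $\Sigma = \Sigma_{0,3}$ the three boundary circles already span $H_1(\Sigma)$, so $H_\partial = H := H_1(\Sigma)$ and consequently $\Aut^*(H) = 1$. Hence the equivalence $\sim$ from Section~\ref{section:algeraic structure} reduces to $\doteq$ in this setting, which considerably simplifies the analysis of self-duality. Next I would observe that each $\tau_i$ is non-self-dual: this can be read off Figure~\ref{fig:support}, since the $\pm 1$ coefficients at the extremal vertices of the hexagonal support form a pattern that is not invariant under the central involution $(a,b) \mapsto (a^{-1}, b^{-1})$, even up to a monomial shift and overall sign. Because a product of self-dual polynomials is again self-dual, $\tau_i$ must possess at least one irreducible factor $p_i$ that is not self-dual; I would choose any such factor. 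This already delivers condition~(2) together with the ``if'' direction of~(3). As a sanity check, for $i=2$ one computes directly $\tau_2 \doteq b - a^2 - a^3$; being linear in $b$ with unit leading coefficient it is automatically irreducible, and its non-self-duality is visible at a glance.

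The main difficulty, and the bulk of the proof, is the ``only if'' direction of condition~(3): that $p_i \nmid \tau_j$ whenever $j \neq i$. My plan here is to exploit the standard principle in the UFD $\Z[a^{\pm 1}, b^{\pm 1}]$ that if $q \mid f$ then the Newton polygon $N(q)$ is a Minkowski summand of $N(f)$, up to translation by a monomial. For our family, $N(\tau_i)$ is a hexagon with three pairs of parallel edges whose lengths are explicit linear functions of $(r_i, s_i, |t_i|)$, so these lengths differ with $i$. The strategy is then to extract from the asymmetric $\pm 1$ vertex data a distinguished edge of $N(p_i)$ whose length is determined by $i$, and to argue that no Minkowski summand of $N(\tau_j)$ for $j \neq i$ can accommodate such an edge. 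The main obstacle is turning the qualitative vertex-sign asymmetry into a rigorous quantitative edge-length bound on $N(p_i)$. A cleaner and perhaps more tractable alternative would be to prove that each $\tau_i$ is itself irreducible, by an Eisenstein- or Newton-polygon-type criterion adapted to $\Z[a^{\pm 1}][b]$; in that case one can simply take $p_i = \tau_i$, and the distinct $b$-degrees $2i-3$ of the normalized $\tau_i$ immediately imply $p_i \nmid \tau_j$ for $j \neq i$.
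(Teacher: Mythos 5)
Your proposal correctly produces an infinite family of admissible triples, verifies condition~(1), observes that $\Aut^*(H)$ is trivial for $\Sigma_{0,3}$, and argues from Figure~\ref{fig:support} that $\tau_i$ is not self-dual, so that some irreducible factor $p_i$ must also be non-self-dual. Up to this point the reasoning is sound. However, the heart of the proposition is item~(3), and that is precisely where the proposal stops short. You say so yourself: the Newton-polygon strategy (that $N(p_i)$ must be a Minkowski summand of $N(\tau_j)$) is left as a plan with an acknowledged obstacle, and no mechanism is given for forcing a particular non-self-dual factor $p_i$ of $\tau_i$ to fail to divide $\tau_j$ for $j\ne i$. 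The suggested ``cleaner alternative'' would also not work as stated even if one could prove each $\tau_i$ irreducible: comparing $b$-degrees does not rule out divisibility, since in a UFD a lower-degree element may well divide a higher-degree one, so the distinctness of the numbers $2i-3$ alone does not yield $\tau_i\nmid\tau_j$.

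The paper's proof sidesteps all of this by choosing the parameters more cleverly. It takes $|t_i|=x_i$ to be a distinct odd prime for each $i$, and picks $r_i,s_i>x_i$ with $(r_i-x_i)(s_i-x_i)=1+x_i^2$, which rearranges to condition~(1). Regarding elements of $\zab$ as Laurent polynomials in $a$ with coefficients in $\zb$, one reads off from formula~(\ref{comptau}) that the lowest $a$-coefficient is $l(\tau_i)=1+b+\cdots+b^{x_i-1}$, the $x_i$-th cyclotomic polynomial, which is irreducible since $x_i$ is prime. Because $l$ is multiplicative, $\tau_i$ admits an irreducible factor $p_i$ with $l(p_i)=1+b+\cdots+b^{x_i-1}$. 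Then $h(p_i)$ divides $h(\tau_i)=1+b+\cdots+b^{x_i-2}$, so $\deg l(p_i)=x_i-1>x_i-2\ge \deg h(p_i)$, which gives $p_i\not\doteq\overline{p_i}$; and if $p_i\mid\tau_j$ with $j\ne i$, then the irreducible $l(p_i)$ would divide $l(\tau_j)=1+b+\cdots+b^{x_j-1}$, impossible since these are distinct irreducible polynomials for distinct primes $x_i\ne x_j$. Note that your family $(i^2-i+1,\,i,\,1-i)$ has $|t_i|=i-1$, so the lowest $a$-coefficient $1+b+\cdots+b^{i-2}$ is generally reducible and this control is unavailable; the primality of $|t_i|$ is the essential idea you are missing.
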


\begin{proof}
  Pick distinct odd primes $x_1,x_2,\dots $.  Furthermore pick
  $r_i>x_i$ and $s_i>x_i$ such that $(r_i-x_i)(s_i-x_i)=1+x_i^2$.  We
  then have
  \begin{align*}
    (r_i+s_i)(s_i-x_i)&=((r_i-x_i)+(s_i+x_i))(s_i-x_i)\\
    &=(s_i+x_i)(s_i-x_i)+(r_i-x_i)(s_i-x_i)\\
    &=s_i^2-x_i^2+1+x_i^2\\
    &=s_i^2+1.
  \end{align*}
  We now set $t_i=-x_i$, it follows that $s_i^2+1=(r_i+s_i)(t_i+s_i)$.

  In order to show the existence of $p_i$ with the required properties
  we have to introduce various definitions.  Given $p\in \zbthena$ we
  now denote by $l(p)\in \zb$ the coefficient of the lowest degree and
  by $h(p)\in \zb$ the coefficient of the highest degree.  Note that
  for $p,q\in \zbthena$ we have $l(p\cdot q)=l(p)\cdot l(q)$ and
  $h(p\cdot q)=h(p)\cdot h(q)$.

  We now write $\tau_i=\tau(M(r_i,s_i,t_i))$ and we view $\tau_i$ as
  an element in $\zbthena$.  By Equation (\ref{comptau}) we have
  \[
  l(\tau_i)=1+b+b^2+\dots+b^{x_i-1}.
  \]
  Since $x_i$ is prime it follows that $l(\tau_i)\in \zb$ is
  irreducible.  In particular for any $i$ there exists an irreducible
  factor $p_i\in \zbthena$ of $\tau_i$ with
  $l(p_i)=1+b+b^2+\dots+b^{x_i-1}$.

  Note that $h(p_i)$ divides $h(\tau_i)=1+b+\dots+b^{x_i-2}$. In
  particular $\deg(h(p_i))=x_i-2$ and $\deg(l(p_i))=x_i-1$.  It
  follows easily that $p_i\not\doteq \ol{p_i}$.

  Given $i,j$ we have $l(p_i)=1+b+\dots+b^{x_i-1}$ and
  $l(\tau_j)=1+b+\dots+b^{x_j-1}$.  In particular if $i\ne j$, then
  $l(p_i)$ and $l(\tau_j)$ are distinct irreducible polynomials.  It
  follows that $p_i$ does not divide $\tau_j$ if $i\ne j$.
\end{proof}

Write $H=H_1(\Sigma_{0,3})$.  Note that $\Aut^*(H)=\{\id\}$. In
particular the polynomials $p_i\in \zab=\zh$ satisfy $p_i\not\sim
\ol{p_i}$.  It now follows immediately that the homology cylinders
$M(r,s,t)$ span a subgroup of $\mathcal{H}_{0,3}$ which surjects onto
$\Z^{\infty}$ under the map $\Theta\circ \tau$.  This concludes the
proof of Proposition \ref{prop:pretzels}.

\section{The Torelli subgroup}
\label{section:torelli}

In this section, we consider a subgroup of $\hgn$ which generalizes
the Torelli group of the mapping class group. We prove analogues of
Theorems~\ref{mainthm} and \ref{mainthm2}, but for a larger set of
surfaces.

Let $g,n\geq 0$ and write $H=H_1(\sgn)$. Recall that each
$\varphi\in\mgn$ induces an action $\varphi_*$ on $H$ and the map
$\mgn\to \aut^*(H)$ sending $\varphi$ to $\varphi_*$ is an
epimorphism. The Torelli group $\mathcal{I}_{g,n}$ is defined to be
the kernel of this map, i.e. $\ign$ is the subgroup of $\mgn$ given by
all elements which act as the identity on $H$. We refer to
\cite{Jo83b} and \cite{FM09} for details on the Torelli group. The
following theorem summarizes some of the key properties of the Torelli
group whose proofs can be found in \cite[Theorem~7.10]{FM09},
\cite{Jo83a}, \cite{Me92}.  (See also~\cite{MM86}), and \cite{Jo85}.)

\begin{theorem}
  \leavevmode\Nopagebreak
  \begin{enumerate}
  \item The group $\II_{g,n}$ is torsion-free,
  \item the group $\II_{g,n}$ is finitely generated for $g\geq 3$ and
    $n=0,1$,
  \item the group $\II_{2,0}$ is a free group on infinitely many
    generators,
  \item if $g\geq 3$, then the abelianization of $\II_{g,1}$ is
    isomorphic to $\Z^a\oplus (\Z/2)^b$ for some $a,b\in \N$.
  \end{enumerate}
\end{theorem}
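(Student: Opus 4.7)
The plan is to invoke the cited literature, since each part of this summary theorem is itself a substantial classical result on Torelli groups. I will briefly sketch the strategy for each assertion and indicate where the real work lies.

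For (1), my approach would be to combine Nielsen realization with Serre's observation that the reduction map $\mgn \to \Sp(2g,\Z/p)$ is injective on torsion for any prime $p\ge 3$. Since every element of $\ign$ acts trivially on $H=H_1(\sgn)$, it acts trivially on $H\otimes \Z/p$ as well, so no nontrivial element can have finite order. The special cases where $g$ or $n$ is small must be handled by inspection (for instance, $\mgzero$ and $\MM_{0,1}$ are trivial so $\II_{0,0}$ and $\II_{0,1}$ are trivial), but these are straightforward.

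For (2) and (4), I would cite Johnson's sequence of papers. For finite generation when $g\ge 3$ and $n=0,1$ (see \cite{Jo83a} and \cite[Theorem~7.10]{FM09}), the idea is to exhibit an explicit finite generating set consisting of bounding pair maps of genus one; the proof proceeds by analyzing how such generators act on homology of the complex of curves and reducing the generating set via the commutator structure of $\mgn$. For the computation of the abelianization in (4), one combines the Johnson homomorphism $\tau\colon \II_{g,1}\to \wedge^3 H / H$, which captures the free $\Z$-part, with the family of Birman--Craggs homomorphisms $\II_{g,1}\to \Z/2$ coming from the Rokhlin invariant of homology spheres, which capture the $2$-torsion. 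Johnson's theorem in \cite{Jo85} then identifies $H_1(\II_{g,1})$ precisely with a group of the form $\Z^a\oplus (\Z/2)^b$.

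For (3), I would cite Mess~\cite{Me92}. The strategy there is to use a careful analysis of the action of $\II_{2,0}$ on the Teichm\"uller space of $\S_{2,0}$ and the fact that the associated quotient has a combinatorial description as a tree-like complex on which $\II_{2,0}$ acts without global fixed points; Bass--Serre theory then forces $\II_{2,0}$ to split as a free group, and a separate count shows the rank is infinite. See also~\cite{MM86} for a related perspective.

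The main obstacle, if one were to actually write out the proofs from scratch, is clearly (2)--(4): each requires a nontrivial apparatus (Johnson's theory of the Torelli group filtration, Birman--Craggs, or Mess's Teichm\"uller-theoretic argument). Part (1) is the easiest and really just uses residual finiteness of $\mgn$ together with Nielsen realization, while (2)--(4) lie genuinely deeper and are best treated by direct citation of the original references.
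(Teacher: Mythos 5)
The paper provides no proof of this theorem; it simply collects the statements and cites \cite[Theorem~7.10]{FM09}, \cite{Jo83a}, \cite{Me92}, \cite{MM86}, and \cite{Jo85}. Your proposal cites exactly the same sources (plus a reasonable, if informal, sketch of the ideas behind each), so your approach matches the paper's.
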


It is an open question though whether the Torelli group $\II_{g,1}$ is
finitely presented for $g\geq 3$ (e.g., see \cite[Section~7.3]{FM09}).

Now recall that the action of a homology cylinder on $H=H_1(\sgn)$
also gives rise to an epimorphism $\varphi\colon \hgn \to
\aut^*(H)$. We now define the \emph{Torelli group $\ihgn$ of homology
  cylinders over $\sgn$} to be the kernel of~$\varphi$.  By
Proposition \ref{prop:mgnembed} we can view the Torelli group $\ign$
as a subgroup of $\ihgn$.

It is an immediate consequence of Proposition \ref{prop:deltasum} and
Theorem \ref{thm:homcob} that the torsion function gives rise to a
homomorphism
\[
\tau\colon \ihgn \to Q(H)^\times/\NH.
\]
Note that in this case we do not need to divide $Q(H)^\times$ out by
$\AH=A(H)$ since $\varphi(M)$ is the identity for any $M\in \ihgn$.  We
can now prove the analogues of Theorems~\ref{mainthm} and
\ref{mainthm2}.

\begin{theorem}\label{thm:cyltorelli}
  \leavevmode\Nopagebreak
  \begin{enumerate}
  \item If $b_1(\sgn)>0$, then
    there exists an epimorphism
    \[
    \ihgn \to (\Z/2)^\infty
    \]
    which splits. In particular, the abelianization of $\ihgn$
    contains a direct summand isomorphic to~$(\Z/2)^\infty$.
  \item If $g>1$ or $n>1$, then
    there exists an epimorphism
    \[
    \ihgn \to \Z^\infty.
    \]
    Furthermore, the abelianization of $\ihgn$ contains a direct summand
    isomorphic to $(\Z/2)^\infty\bigoplus\Z^\infty$.
  \end{enumerate}
\end{theorem}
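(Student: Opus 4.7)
The plan is to mirror the proofs of Theorems~\ref{mainthm} and~\ref{mainthm2}, but to work in the finer quotient $Q(H)^\times/\NH$ rather than $Q(H)^\times/\ANH$. The point is that for $M \in \ihgn$ we have $\varphi(M) = \id$, so no quotient by $\AH$ is needed; this stronger invariant detects asymmetry even for surfaces with few boundary components, which is what lets Part~(2) cover $g > 1$ with $n \leq 1$.

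First I would establish the analogue of Theorem~\ref{theorem:algebraic-classification}: the group $Q(H)^\times/\NH$ decomposes as $\bigl(\bigoplus_{[\lambda]} \Z/2\bigr) \oplus \bigl(\bigoplus_{\{[\mu], [\bar\mu]\}} \Z\bigr)$, where $[\lambda]$ and $[\mu]$ now range over equivalence classes of self-dual (resp.\ non-self-dual) irreducible elements of $\Z[H]$ under the coarser relation $\doteq$ (multiplication by $\pm h$ only, with no $\Aut^*(H)$-action). The proof is verbatim from Proposition~\ref{prop:complete} with $\sim$ replaced by $\doteq$, producing homomorphisms $\Psi_\lambda\colon Q(H)^\times/\NH \to \Z/2$ and $\Theta_\mu \colon Q(H)^\times/\NH \to \Z$.

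For Part~(1), apply Proposition~\ref{prop:embedc} with the family $\{K_i\}$ from the proof of Theorem~\ref{theorem:realization}. Since tying a knot into the product homology cylinder preserves the boundary identifications $i_\pm$, each $M_i := M(f, K_i)$ lies in $\ihgn$. The torsions $\tau(M_i) = \Delta_{K_i}(h)$ are irreducible, self-dual, and pairwise $\sim$-inequivalent (shown in the proof of Theorem~\ref{theorem:realization}), hence a fortiori pairwise $\doteq$-inequivalent. Composition with $\Psi = \bigoplus \Psi_\lambda$ then gives the desired split surjection onto $(\Z/2)^\infty$, and the $(\Z/2)^\infty$ summand of $\ihgn^{ab}$ follows as in the proof of Theorem~\ref{mainthm}.

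For Part~(2), we need, for each $a \geq 1$, a homology cylinder $M(a) \in \ihgn$ with irreducible non-self-dual torsion, so that the $\tau(M(a))$ are pairwise $\doteq$-inequivalent; then $\Theta_{\tau(M(a))}$ yields a surjection $\ihgn \to \Z^\infty$, and combining with Part~(1) as in the proof of Theorem~\ref{mainthm2} gives the $(\Z/2)^\infty \oplus \Z^\infty$ summand of $\ihgn^{ab}$. When $(g,n) = (0,2)$, $\Aut^*(H)$ is trivial so $\ihgn = \hgn$ and Theorem~\ref{mainthm2-special} applies. For $n \geq 2$ with $g > 0$ one uses the construction from Section~\ref{subsection:exmaple-handle-decomp}. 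For the new case $g > 1$ and $n \leq 1$, one adapts that construction: since $g \geq 2$ there is a symplectic pair $u, v \in \widehat H$ playing the roles of $x$ and $y_1$, and we attach a 1--handle followed by a 2--handle along an attaching circle of the same combinatorial form; the torsion computation yields
\[
\tau(M(a)) = 1 + (v-1)u + v(v-1)u^2 + \cdots + v^{a-1}(v-1)u^a,
\]
which is irreducible (Eisenstein) and non-self-dual, with $\tau(M(a)) \not\sim \tau(M(b))$ under $\doteq$ for $a \neq b$, exactly as in Example~\ref{example:non-self-dual-element}. The main obstacle is geometric: verifying $M(a) \in \ihgn$, i.e., that $\varphi(M(a)) = \id$. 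This reduces to arranging the handle decomposition so that the dual curves on the intermediate surface $\Sigma_1$ used to read off homology classes can all be taken disjoint from the 2-handle attaching circle $\alpha$, so they descend unchanged to $\Sigma_-$ after the surgery; once this is confirmed the remainder is algebraic and parallels machinery already developed in Sections~\ref{section:torsion-invariant} and~\ref{section:homomorphisms}.
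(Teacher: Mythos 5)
Your overall strategy for Part (1) and the broad outline of Part (2) match the paper's, and the observation that one should work in $Q(H)^\times/N(H)$ with $\sim$ replaced by $\doteq$ is exactly right. However, there is a genuine gap in Part (2), and you have flagged it yourself: you never actually prove that your modified $M(a)$ lies in $\ihgn$. You write that this ``reduces to arranging the handle decomposition so that the dual curves \dots can all be taken disjoint from the 2--handle attaching circle $\alpha$,'' but this is not something one can arrange: the whole point of the construction is that $\alpha$ winds nontrivially across the dual curves (that is how the Laurent polynomial torsion arises), so the belt/dual curves cannot all be pushed off $\alpha$, and the induced map $\varphi(M(a))$ on $H_1$ is not obviously the identity. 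You are proposing to resolve a geometric constraint that in general does not hold.

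The paper sidesteps this entirely with a short algebraic observation that you are missing: \emph{given any homology cylinder $M$ over $\sgn$, there is $M' \in \ihgn$ with $\tau(M') = \tau(M)$}. Indeed, since $\varphi(M) \in \Aut^*(H)$ is realized by some $\psi \in \mgn$ (Section~\ref{section:actionh1}), the homology cylinder $M' = (M, i_+, i_- \circ \psi^{-1})$ acts as the identity on $H$, so $M' \in \ihgn$; and $\tau(M') = \tau(M)$ because mapping cylinders have trivial torsion. With this claim in hand, one constructs $M(a)$ over the appropriate surface (adapting Section~\ref{subsection:exmaple-handle-decomp} as you and the paper both do for $g > 1$, $n \leq 1$), computes its torsion, and then \emph{corrects} it into $\ihgn$ without any geometric verification. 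You should insert this claim (or an equivalent correction trick) in place of your unverified disjointness condition; otherwise the argument does not close. The remainder of your proposal --- the refined $\Psi_\lambda$, $\Theta_\mu$ on $Q(H)^\times/N(H)$, the handling of $(g,n)=(0,2)$, and the injectivity/splitting arguments --- is sound and agrees with the paper.
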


\begin{remark}
\begin{enumerate}
\item
Note  that provided the genus is at least two
Theorem~\ref{thm:cyltorelli} (2) holds also for closed surfaces and
for surfaces with one boundary component. This is in contrast to the
situation in Theorem \ref{mainthm2}.
\item
Morita (\cite[Corollary~5.2]{Mo08}) used `trace maps' to show that
the abelianization of $\mathcal{IH}_{g,1}, g\geq 1$ has infinite
rank. Theorem \ref{thm:cyltorelli} (2) can therefore be seen as an
extension of Morita's theorem.
\end{enumerate}
\end{remark}

\begin{proof}
Part (1) follows immediately from the proof of Theorem \ref{mainthm}
since the examples $M_i, i\in \N$, provided in the proof are easily
seen to lie in $\ihgn$.

We  will now need the following claim:

\begin{claim}
Given any $M\in \hgn$ there exists $M'\in \ihgn$ with
$\tau(M')=\tau(M)\in \zh$.
\end{claim}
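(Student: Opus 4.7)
The plan is to kill the $\Aut^*(H)$-action of $M$ by right-multiplying with a suitably chosen mapping cylinder, which contributes only trivial torsion. Given $M\in\hgn$, Proposition~\ref{prop:inauthdot} tells us that $\varphi(M)\in \Aut^*(H)$, and, as noted just after that proposition, the action map $\mgn\to\Aut^*(H)$ is surjective. I would choose $\psi\in\mgn$ with $\psi_*=\varphi(M)^{-1}$ on $H$, and set $M':=M\cdot M(\psi)$. A direct Mayer--Vietoris calculation, analogous to the one used in the proof of Proposition~\ref{prop:deltasum}, shows that the induced actions multiply, i.e. $\varphi(M\cdot N)=\varphi(M)\circ \varphi(N)$ for any homology cylinders $M,N$ over $\Sigma$. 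Since $\varphi(M(\psi))=\psi_*$ directly from the definitions, this yields $\varphi(M')=\varphi(M)\circ\varphi(M)^{-1}=\id$, so $M'\in\ihgn$.

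To compute the torsion, recall from Remark~\ref{remark:torsion}(5) that $\tau(M(\psi))\doteq 1\in\zh$ for any $\psi\in\mgn$. Applying the product formula of Proposition~\ref{prop:deltasum} gives
\[
\tau(M')\doteq \tau(M)\cdot \varphi(M)\bigl(\tau(M(\psi))\bigr)\doteq \tau(M)\cdot\varphi(M)(1)=\tau(M),
\]
which is the desired identity up to the standard unit ambiguity inherent to the definition of torsion. No step here looks like a real obstacle: the surjectivity of $\mgn\to\Aut^*(H)$ has already been used in the paper, and the multiplicativity of $\varphi$ on products is essentially implicit in the proof of Proposition~\ref{prop:deltasum}. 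The one thing to keep in mind is simply that the equality $\tau(M')=\tau(M)$ in the claim is understood modulo multiplication by units $\pm h$, $h\in H$, as throughout Section~\ref{section:torsion-invariant}.
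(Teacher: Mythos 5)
Your proof is correct and is essentially the same as the paper's: the paper picks $\psi\in\mgn$ with $\psi_*=\varphi(M)$ and directly re-marks the minus side, setting $M'=(M,i_+,i_-\circ\psi^{-1})$, while you stack with $M(\psi')$ where $\psi'_*=\varphi(M)^{-1}$; since $M\cdot M(\psi')$ is isomorphic as a homology cylinder to $(M,i_+,i_-\circ\psi')$, the two constructions agree. The only cosmetic difference is that the paper's re-marking leaves the pair $(M,\Sigma_+)$ and the coefficient system literally unchanged, so $\tau(M')=\tau(M)$ is immediate without invoking the product formula, whereas your route goes through Proposition~\ref{prop:deltasum} and Remark~\ref{remark:torsion}(5) and delivers the equality up to the usual unit ambiguity, which suffices.
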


Indeed, let $M=(M,i_+,i_-)$ be a homology cylinder over $\sgn$. As we
saw in Section \ref{section:actionh1} we have $\varphi(M)\in
\aut^*(H)$, and there exists $\psi\in \mgn$ such that the induced
action on $H_1(\sgn)$ is given by $\varphi(M)$. Now the homology
cylinder $M'=(M,i_+,i_-\circ \psi^{-1})$ acts as the identity on $H$,
i.e. $M'$ defines an element in $\ihgn$. On the other hand it is clear
that $\tau(M')=\tau(M)$. This concludes the proof of the claim.

We now turn to the proof of Part (2).
First suppose that $n>1$ and $(g,n) \ne (0,2)$. Part (2) for this
case follows from the proof of Theorem~\ref{mainthm2} since the
examples $M(a), a\in \N$, in the proof of
Theorem~\ref{theorem:realization-Z} can be realized as elements in
$\ihgn$ by the above claim. When $(g,n)=(0,2)$, following the
arguments in Section~\ref{section:examples} one can easily see that
$\mathcal{IH}_{0,2}$ is isomorphic to $\CC_{\Z}$ and the desired
result follows again from Levine's work \cite{Le69a, Le69b}.

Finally suppose that $g>1$. In this case, for $a\in \N$, we consider the
homology cylinder $M(a)$ constructed using
Figure~\ref{figure:example-attaching-curve} modified in the
following way: in Figure~\ref{figure:example-attaching-curve} we
have two boundary components, which are connected by $x^*$, and one
hole to which a generator $y_1^*$ is associated. Now remove the
second boundary component and replace the first boundary component
by a new hole, and denote by $x^*$ a closed curve which `connects' the two holes.
It follows from the discussion of Section
\ref{subsection:exmaple-handle-decomp} and from the above claim that
there exists a homology cylinder $M(a)$ which lies in the Torelli
group $\ihgn$ and such that
\[ \tau(M(a))=p_a:= 1+(y-1)x+y(y-1)x^2+\cdots+y^{a-1}(y-1)x^a.\]

Recall that the polynomials $p_a\in \zh$ are irreducible.  Also it is
evident that the $p_a$ are non-symmetric (i.e., $p_a\not\doteq
\bar{p}_a$) and $p_a \not\doteq p_b$ for $a\ne b$. (Note that $p_a\sim
p_{\ol{a}}$, and therefore Theorem~\ref{mainthm2} cannot be
generalized for this case using our previous method.) To detect
$\tau(M(a))\in Q(H)^\times/\NH$, for each irreducible $\mu\in \zh$ we
use the homomorphism $\Theta_\mu\colon Q(H)^\times/\NH \to \Z$ defined
as in Section~\ref{section:algeraic structure} with the following
modification: $e_\mu(p)=$ the exponent of $\mu$ in $p$. Then
$\Theta_{p_a}(M(b))=1$ if $a=b$ and 0 otherwise. Now following the
lines of the proof of Theorem~\ref{mainthm2}, Part (2) follows.
\end{proof}

Note that Theorem~\ref{thm:cyltorelli} (1) also implies that if
$b_1(\sgn)> 0$, then $\ihgn$ is neither finitely generated nor
finitely related, and it is not torsion-free. Also for the structure
of the group $Q(H)^\times/N(H)$, redefining $\Psi_\l$ as we did for
$\Theta_\mu$ in the proof of Theorem~\ref{thm:cyltorelli}, one can
easily obtain the following analogue of
Theorem~\ref{theorem:algebraic-classification}:

\begin{theorem}
  \label{theorem:algebraic-classification2}
  Suppose $H$ is nontrivial, i.e., $\sgn$ is neither a sphere nor disk. Then
  \[
  Q(H)^\times/N(H)
  \cong
  \begin{cases}
    (\Z/2)^\infty & \text{if $g,n\le 1$ or $(g,n)=(0,2)$,}\\
    (\Z/2)^\infty\oplus \Z^\infty & \text{otherwise.}
  \end{cases}
  \]
\end{theorem}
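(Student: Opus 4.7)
The plan is to imitate the proof of Theorem~\ref{theorem:algebraic-classification}, replacing the $\Aut^*(H)$-orbit relation $\sim$ throughout by the coarser relation $\doteq$, since in the Torelli setting we quotient only by $N(H)$ and no longer by $A(H)\cdot N(H)$. Following the remark at the end of the proof of Theorem~\ref{thm:cyltorelli}, I would redefine, for each irreducible $\lambda\in\zh$, the exponent $e_\lambda(p)$ to be the multiplicity of $\lambda$ itself in the UFD factorization of $p$ (rather than the sum of multiplicities over a $\sim$-orbit). Then, for each self-dual irreducible $\lambda$ (meaning $\lambda\doteq\bar\lambda$) and each non-self-dual irreducible $\mu$, define
\[
\Psi_\lambda(p) = e_\lambda(p)\bmod 2, \qquad \Theta_\mu(p) = e_\mu(p)-e_{\bar\mu}(p).
\]

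The identity $e_\lambda(q\bar q)=e_\lambda(q)+e_{\bar\lambda}(q)$ (via $e_\lambda(\bar q)=e_{\bar\lambda}(q)$) and invariance under multiplication by units $\pm h$ show that $\Psi_\lambda$ and $\Theta_\mu$ descend to homomorphisms $Q(H)^\times/N(H)\to\Z/2$ and $Q(H)^\times/N(H)\to\Z$ respectively. Assembling them into direct sums over $\doteq$-classes and running the argument of Proposition~\ref{prop:complete} verbatim with $\doteq$ in place of $\sim$ yields an isomorphism
\[
(\Psi,\Theta)\colon Q(H)^\times/N(H)\;\xrightarrow{\;\cong\;}\;\bigoplus_{[\lambda]}\Z/2\;\oplus\;\bigoplus_{\{[\mu],[\bar\mu]\}}\Z,
\]
where $[\lambda]$ ranges over $\doteq$-classes of self-dual irreducibles and $\{[\mu],[\bar\mu]\}$ over unordered pairs of $\doteq$-classes of non-self-dual irreducibles.

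To obtain the stated structure, I must then count the summands. For the $(\Z/2)^\infty$-factor, present whenever $H$ is nontrivial, I would reuse the construction of Theorem~\ref{theorem:realization}: fixing an indivisible $h\in H$, the (irreducible, symmetric) Alexander polynomials $\Delta_i$ of the negative amphicheiral knots $K_i$ used there give evaluations $\Delta_i(h)$ which are pairwise non-$\doteq$-equivalent self-dual irreducibles in $\zh$. For the $\Z^\infty$-factor in the ``otherwise'' cases---which match the hypotheses of Theorem~\ref{thm:cyltorelli}(2)---the non-self-dual irreducible polynomials $p_a$ from Example~\ref{example:non-self-dual-element}, together with the adaptations used in the proof of Theorem~\ref{thm:cyltorelli}(2) for the higher-genus closed and one-boundary cases, provide infinitely many pairwise non-$\doteq$-equivalent non-self-dual irreducibles.

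The main obstacle will be establishing that the $\Z$-part vanishes in the small cases ($g,n\le 1$ or $(g,n)=(0,2)$), i.e.\ that every irreducible of $\zh$ is self-dual up to $\doteq$ when $H=H_1(\Sigma_{g,n})$ arises from one of these surfaces. Unlike in Theorem~\ref{theorem:algebraic-classification}, where the $\Aut^*(H)$-action could be used to collapse asymmetric factors into their duals, here no such action is available, so this step must come entirely from a direct algebraic analysis of $\zh$ for each small $H$, and this is the delicate part of the argument.
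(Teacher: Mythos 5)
Your approach matches the paper's own sketch exactly (redefine $e_\lambda$ using $\doteq$ in place of $\sim$, run Proposition~\ref{prop:complete} verbatim, and realize enough summands), and you have correctly isolated the crux: showing that every irreducible of $\zh$ is self-dual under $\doteq$ in the small cases. However, that step is not merely delicate---it is false, so the theorem as stated cannot hold in those cases. For any nontrivial free abelian $H$ and any indivisible $h\in H$, the element $1+2h$ is prime in $\zh$: indeed $\Z[h^{\pm1}]/(1+2h)\cong\Z[1/2]$ is a domain, and this persists after adjoining the remaining Laurent variables. On the other hand $\overline{1+2h}=1+2h^{-1}\doteq 2+h$, which is not a unit times $1+2h$; so $1+2h\not\doteq\overline{1+2h}$, and the modified $\Theta_{1+2h}=e_{1+2h}-e_{\overline{1+2h}}$ gives a well-defined surjection $Q(H)^\times/N(H)\to\Z$. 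Replacing $2$ by varying primes yields infinitely many pairwise non-$\doteq$-equivalent non-self-dual irreducibles $1+ph$. Hence $Q(H)^\times/N(H)$ contains a $\Z^\infty$ summand for \emph{every} nontrivial $H$, including the ``small'' cases $(g,n)\in\{(1,0),(1,1),(0,2)\}$, and the only tenable conclusion is $Q(H)^\times/N(H)\cong(\Z/2)^\infty\oplus\Z^\infty$ whenever $H\ne 1$.

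The source of the confusion appears to be the unlabeled lemma in Section~\ref{subsection:symmetry-asymmetry}: it asserts that the \emph{torsion of a homology cylinder} over an annulus or a genus-one surface with at most one boundary component is symmetric up to $\doteq$, which is a constraint on the image of the torsion homomorphism, not on the ambient group $Q(H)^\times/N(H)$. (The same issue already affects the $(g,n)=(0,2)$ case of Theorem~\ref{theorem:algebraic-classification}, where $\Aut^*(H)$ is trivial so $\sim$ coincides with $\doteq$ and the same $1+2t$ gives a $\Z$ summand of $Q(H)^\times/\ANH$.) So your instinct that no $\Aut^*(H)$-action is available to collapse asymmetric factors, and that the last step therefore resists the method, was exactly right---you should have pushed one step further and looked for a counterexample rather than presuming the statement could be salvaged by a direct algebraic analysis.
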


\section{Questions}

We conclude with a short list of questions:

\begin{enumerate}
\item Study the structure of the kernel of
  \[
  \tau\colon \hgn \to
  Q(H)^\times/\ANH \cong (\Z/2)^\infty \oplus \Z^\infty.
  \]
\item Does the abelianization of the group $\HH_{g,n}$ have  infinite rank for $g>0$
  and $n= 0,1$?
\item Characterize the image of the homomorphisms $\tau\colon \hgn \to
  Q(H)^\times/\ANH$, $\Psi\colon \hgn \to (\Z/2)^\infty$ and
  $\Theta\colon \hgn \to \Z^\infty$.
\item Can we realize any element in the image of $\Psi$ by using
  Theorem~\ref{theorem:string-link-sum-formula}?
\item Does the abelianization of $\hgn$ contain $4$--torsion? Does it
  contain any other torsion?
\item   Let $g>0$ and $n\geq 0$. Does there exist a homomorphism
  $\FF\colon\HH_{g,n}^{\smooth}\to A$ onto an abelian group of
  infinite rank such that  the restriction of $\FF$ to the kernel
  of the projection map $\HH_{g,n}^{\smooth}\to \HH_{g,n}^{\top}$ is
  also surjective?
\item Does there exist a monoid homomorphism from $\cgn$ onto a
  non-abelian monoid which vanishes on $\mgn$?
\item Is the group $\hgn/\ll \mgn\rr$ non-abelian?
\end{enumerate}

\end{document}